\DeclareSymbolFontAlphabet{\amsmathbb}{AMSb}
\declaretheoremstyle[bodyfont=\normalfont]{normalbody}
\declaretheorem[numberwithin=subsection,name=Theorem]{theorem}
\declaretheorem[sibling=theorem,style=normalbody,name=Definition]{definition}
\declaretheorem[sibling=theorem,name=Corollary]{corollary}
\declaretheorem[sibling=theorem,name=Lemma]{lemma}
\declaretheorem[sibling=theorem,name=Proposition]{proposition}
\declaretheorem[sibling=theorem,style=normalbody,name=Remark]{remark}
\declaretheorem[sibling=theorem,style=normalbody,name=Remarks]{remarks}
\declaretheorem[sibling=theorem,style=normalbody,name=Fact]{fact}
\declaretheorem[numbered=no,name=Theorem]{theorem-no}
\numberwithin{equation}{section}
\newcommand{\Z}{\mathbb{Z}}
\newcommand{\N}{\mathbb{N}}
\newcommand{\Q}{\mathbb{Q}}
\newcommand{\R}{\mathbb{R}}
\newcommand{\G}{\mathbb{G}}
\renewcommand{\P}{\mathbb{P}}
\newcommand{\pint}[2]{\left\langle#1,#2\right\rangle}
\newcommand{\s}[1]{\mathcal{#1}}
\newcommand{\Hom}{\operatorname{Hom}}
\newcommand{\Ext}{\operatorname{Ext}}
\newcommand{\Gal}{\operatorname{Gal}}
\newcommand{\Br}{\operatorname{Br}}
\newcommand{\coker}{\operatorname{coker}}
\newcommand{\cone}{\operatorname{cone}}
\newcommand{\Cores}{\operatorname{Cores}}
\newcommand{\inv}{\operatorname{inv}}
\newcommand{\Pic}{\operatorname{Pic}}
\newcommand{\Alb}{\operatorname{Alb}}
\newcommand{\NS}{\operatorname{NS}}
\newcommand{\et}{\operatorname{\'et}}
\newcommand{\sm}{\mathrm{sm}}
\newcommand{\fppf}{\mathrm{fppf}}
\newcommand{\CH}{\operatorname{CH}}
\newcommand{\sHom}{\mathcal{H\mkern-6mu o\mkern-1mu m\mkern.5mu}}
\newcommand{\sExt}{\mathcal{E\mkern-4mu x\mkern-0.5mu t\mkern.5mu}}
\newcommand{\Spec}{\operatorname{Spec}}
\newcommand{\id}{\operatorname{id}}
\newcommand{\im}{\mathrm{im}\mkern1mu}
\newcommand{\D}{\mathbf{D}}
\newcommand{\tors}{\mathrm{tors}}
\newcommand{\cts}{\mathrm{cts}}
\newcommand{\LCA}{\mathbf{LCA}}
\newcommand{\Haus}{\mathrm{Haus}}
\newcommand{\pseudo}{\mathrm{I}_\tau}
\newcommand{\vsim}{\rotatebox[origin=c]{-90}{$\sim$}}
\DeclareFontFamily{U}{wncy}{}
\DeclareFontShape{U}{wncy}{m}{n}{<->wncyr10}{}
\DeclareSymbolFont{mcy}{U}{wncy}{m}{n}
\DeclareMathSymbol{\Sha}{\mathord}{mcy}{"58}
\DeclareFontFamily{U}{wncy}{}
\DeclareFontShape{U}{wncy}{m}{n}{<->wncyr10}{}
\DeclareSymbolFont{mcy}{U}{wncy}{m}{n}
\DeclareMathSymbol{\Cha}{\mathord}{mcy}{"51}
\titleformat{\section}[block]
{\normalsize\sc}{\thesection}{1em}{\centering\normalsize}
\titleformat{\subsection}[block]
{\normalsize\bf}{\thesubsection}{1em}{\normalsize}
\titleformat{\paragraph}[display]
{\normalsize\bf}{\theparagraph}{1em}{\normalsize}
\tikzset{
	symbol/.style={
		draw=none,
		every to/.append style={
			edge node={node [sloped, allow upside down, auto=false]{$#1$}}}
	}
}
\def\expandafter\normalsize\expandafter{%
	\normalsize%
	\setlength\abovedisplayskip{.5em}%
	\setlength\belowdisplayskip{.5em}%
	\setlength\abovedisplayshortskip{.5em}%
	\setlength\belowdisplayshortskip{.5em}%
}
\title{Lichtenbaum-van Hamel duality for singular varieties over $p$-adic fields}
\author{Felipe Rivera-Mesas}
\address{Departamento de Matemáticas, Facultad de Ciencias, Universidad de Chile}
\email{felipe.rivera.m@ug.uchile.cl}
\thanks{ {\sl Keywords:} $p$-adic fields, Brauer groups, Galois cohomology, Cartier duality, 1-motives; \\
	\mbox{\hspace{1.3em}} {\sl MSC codes (2020):} 14G20 (Algebraic Geometry - Local ground fields), 14F22 (Brauer group of schemes), 11S25 (Galois Cohomology); \\
	\mbox{\hspace{1.3em}} This work was partially supported by the Beca de Doctorado Nacional ANID folio 21210171, awarded in 2021.}
\begin{document}
	
	\begin{abstract}
		In this article, we extend the van Hamel-Lichtenbaum duality theorem to (not necessarily smooth) proper and geometrically integral varieties defined over a $p$-adic field $k$. More precisely, we prove that for such variety $X$ there exists a natural continuous perfect pairing
			\[ \Br_1(X)\times H_0(X,\Z)_\tau^{\wedge} \to \Q/\Z, \]
		where $\Br_1(X):=\ker(\Br(X)\to\Br(\overline{X}))$ is the algebraic Brauer group of $X$, $H_0(X,\Z)_\tau$ is the zeroth group of truncated homology $\Hom_{D(k_{\sm})}(\tau_{\leq 1}R\phi_*\G_{m,X},\G_{m,k})$, $\phi$ is the structure morphism of $X$, and $(-)^{\wedge}$ is the profinite completion functor.
	\end{abstract}
	
	\maketitle
	
\section{Introduction}

In 1969, Lichtenbaum found an explicit description for the Pontryagin dual of the Brauer-Grothendieck group $\Br(X):=H^2_{\et}(X,\G_m)$ of a projective, geometrically connected and smooth curve $X$ over a $p$-adic field $k$. More precisely, he defined a nondegenerate pairing
\begin{equation} \label{lichtenbaum pairing}
	\Br(X) \times \Pic(X) \to \Br(k) \overset{\inv}{\cong} \Q/\Z,
\end{equation}
where $\inv:\Br(k)\overset{\sim}{\to}\Q/\Z$ is the invariant map from local class field theory. Thus, he showed that the Pontryagin dual $\Br(X)^*:=\Hom(\Br(X),\Q/\Z)$ is isomorphic to $\Pic(X)^\wedge$ (see \cite{L69}). In fact, he actually constructed a general pairing
\[ \Br(X) \times \CH_0(X) \to \Br(k) \] 
for a proper and geometrically integral curve $X$ defined over an arbitrary field $k$. When $X$ is also smooth and $k$ is a non-archimedean local field, this pairing becomes the pairing \eqref{lichtenbaum pairing} because, in this case, $\CH_0(X)\cong\Pic(X)$ and $\Br(k)\cong\Q/\Z$ via the invariant map. Indeed, when $X$ is a locally factorial (e.g. regular, see \cite{AB}) scheme, we have an isomorphism $\CH_0(X)\cong\Pic(X)$ (cf. \cite[Corollaire 21.6.10]{EGAIV}).   \\

Now, we briefly recall the main ideas of Lichtenbaum's proof of the nondegeneracy of pairing \eqref{lichtenbaum pairing}. Let us recall that when $k$ is a non-archimedean local field and $X$ is a proper and geometrically integral $k$-variety, we have the following exact sequence
\[ 0 \to \frac{\Pic(X^s)^\Gamma}{\Pic(X)} \to \Br(k) \to \Br_1(X) \to H^1(k,\Pic_{X/k}) \to 0 \]
where $\Gamma=\Gal(k^s/k)$ is the Galois group of some separable closure $k^s$ of $k$ (cf. \cite[Proposition 5.4.2]{BG}). Moreover when $X$ is a curve, we have that $\Br_1(X)=\Br(X)$ (cf. \cite[Theorem 5.6.1.iv)]{BG}). On the other hand, we have the following exact sequence
\[ 0 \to \Pic^0_{X/k} \to \Pic_{X/k} \to \NS_{X/k} \to 0. \]
These two exact sequences are the theoretical framework in which Lichtenbaum developed his duality results. In this context, the main issue is to determine the Pontryagin dual of $H^1(k,\Pic_{X/k})$. In Lichtenbaum's setting, it is crucial that the Albanese variety $\Alb_{X/k}^0$ is an abelian variety because he used Tate's local duality theorem for abelian varieties over local fields for constructing the pairing above. More precisely, he proved that $H^1(k,\Pic_{X/k})$ and $H^0(k,\Alb^0_{X/k})\cong\CH_0(X)^0$, the group of classes of 0-cycles of degree 0 modulo rational equivalence, are dual (the author uses the notation $\Pic_0(X)$ to denote $\CH_0(X)^0$). Thus, using the exact sequences above, he could prove that the pairing \eqref{lichtenbaum pairing} is nondegenerate and, thereby, $\Br(X)$ and $\Pic(X)^\wedge$ are dual. Note that $\Pic(X)$ appears in relation to the Albanese variety $\Alb_{X/k}^0$ and not to the Picard scheme $\Pic_{X/k}$. \\

In 2004, van Hamel generalized Lichtenbaum's duality theorem for smooth and proper varieties $X$ over a $p$-adic field $k$ of arbitrary dimension (see \cite[Theorem 1]{vH04}). Note that van Hamel's result assumes smoothness on $X$. In his work, van Hamel redefined the pairing \eqref{lichtenbaum pairing} as a particular case of a Yoneda pairing. Moreover, van Hamel introduced a new homology for varieties with which this Yoneda pairing becomes a pairing between this new homology and sheaf cohomology. When we stand in the case of curves (Lichtenbaum's situation), the group $\CH_0(X)$ is realized from a homological group in degree 0 in this new homology. In general, this homological group is isomorphic to the $k$-points of the Albanese scheme $\Alb_{X/k}$ (cf. \cite[Theorem 3.6]{vH04}) defined by Ramachandran (see \cite{Ra01}). Thus, van Hamel proved that for a smooth and proper variety $X$ over a $p$-adic field the Pontryagin dual of $\Br(X)$ is isomorphic to the profinite completion of $\Alb_{X/k}(k)$. Finally, it is important to mention that van Hamel worked with technical machinery (such as derived categories) for proving the perfection of the Yoneda pairing. In this way, he gives a more conceptual proof of Lichtenbaum's duality theorem. \\

In this article, we extend van Hamel's generalization of Lichtenbaum's duality to the general case of any (possibly singular) proper variety over a $p$-adic field. In order to do that, for a proper variety $X$ over a $p$-adic field $k$ with structure map $\phi:X\to\Spec k$, we define the groups of truncated homology $H_i(X,\Z)_{\tau}$ of $X$ as the group of derived homomorphisms $R\Hom_{k_{\sm}}(\tau_{\leq 1}R\phi_*\G_m,\G_m[i])$. The main result is the following.

\begin{theorem-no}[Corollary \ref{lichtenbaum duality}]
	We have a perfect continuous pairing
	\[ H_0(X,\Z)_\tau^\wedge \times \Br_1(X) \to \Q/\Z. \]
	In particular, when $X$ is a curve, this pairing gives a topological isomorphism 
	\[ \Br(X)^* \cong H_0(X,\Z)_\tau^\wedge. \]
\end{theorem-no}

When $X$ is a curve, this result extends the classical results of Lichtenbaum since, in this case, $\Br_1(X)$ agrees with $\Br(X)$. A key point to obtain this result is to properly topologize $H_0(X,\Z)_\tau$. At that point, the presence of singularities on $X$ causes some topological difficulties. For instance, when $X$ is not normal, the Cartier dual of its Picard variety is not a algebraic group but a 1-motive (see \S\ref{1-motive section}). Another issue is that some cohomology groups that appear in this setting turn out to be non-Hausdorff (see \S\ref{topologization section}). In our setting, this is not an important issue, but it warns us about the kind of problems that could appear in the case of local fields of positive characteristic. Moreover, with our treatment of the topological issues, we also aim to clarify the proof of van Hamel's generalization of Lichtenbaum's duality. \\

It is important to note that the group $H_0(X,\Z)_\tau$ is hard to describe explicitly because of its derived nature. However, we give an explicit description of this group for a certain kind of varieties. In order to do that, we give an explicit description of the pushforward map $H_0(X',\Z)_\tau\to H_0(X,\Z)_\tau$ induced by the normalization map $X'\to X$ of $X$. Thus, with our duality result, we can describe the map $\Br_1(X)\to\Br_1(X')$ for some singular varieties, which has a particular arithmetic interest. More precisely, the singular varieties treated are obtained by pinching normal proper varieties in a finite collection of closed points.

\section*{Acknowledgments}

I would like to thank my advisors, Cristian Gonz\'alez Avil\'es and Giancarlo Lucchini Arteche, for their constant support while writing this article. I also thank Diego Izquierdo for his carefully reviewing of this article. Finally, I thank {\it Agencia Nacional de Investigaci\'on y Desarrollo} (ANID) for funding my research via Beca de Doctorado Nacional ANID folio 21210171. 

\section{Notations and definitions}

For an abelian category $\s{A}$, we denote $C(\s{A})$ the category of complexes in $\s{A}$ and $D(\s{A})$ its derived category. There is a canonical functor $\s{A}\to D(\s{A})$ that sends an object $A$ in $\s{A}$ to the complex 
	\[ \cdots \to 0 \to 0 \to A \to 0 \to 0 \to \cdots \]
concentrated in degree 0. We still write $A$ to represent this complex. For a right derivable additive functor $F:\s{A}\to\s{B}$ between abelian categories, we denote $RF:D(\s{A})\to D(\s{B})$ its derived functor. Moreover, for $i\in\Z$, we denote $R^iF$ the composition $H^i\circ RF:D(\s{A})\to\s{B}$, where $H^i:D(\s{B})\to\s{B}$ is the $i$th cohomology functor. Recall that when $\s{A}$ has enough injectives, every additive functor $F:\s{A}\to\s{B}$ is right derivable.

For a site $X_E$ over a scheme $X$, we denote $S(X_E)$ the abelian category of sheaves of abelian groups on $X_E$ and a sheaf $\s{F}\in S(X_E)$ is called an \emph{abelian sheaf on} $X_E$. We denote $D(X_E)$ the bounded derived category of $C(S(X_E))$. For a bounded complex of abelian sheaves $\s{F}^\bullet$ on $X_E$, we denote $H^i(X_E,\s{F}^\bullet)$ the abelian group $R^i\Gamma(X_E,\s{F}^\bullet)$. When $X$ is the spectrum of a field $F$, we write $F_E$ instead of $(\Spec F)_E$. Let $G$ be group scheme over $X$. If the functor of points of $G$ defines a abelian sheaf on $X_E$, we also write $G$ to refer to the sheaf $\Hom_X(-,G)$.

Let $k$ be a field. A \emph{$k$-scheme} $X$ is a scheme over $\Spec k$ whose structure morphism is separated and locally of finite type. Moreover, we refer to a $k$-scheme $X$ as a $k$-\emph{variety} if it is geometrically integral. A $k$-variety of dimension one is called a \emph{$k$-curve}. A group scheme over $k$ is called a \emph{(locally) algebraic $k$-group} if its structure map is (locally) of finite type. We denote $\mathcal{C}_k$ the category of commutative algebraic $k$-groups. When $k$ has characteristic zero, every connected commutative algebraic group $G$ is an extension
	\begin{equation} \label{alg group decomp}
		 0 \to T\times \G_a^n \to G \to A \to 0,
	\end{equation}
for some $n\in\N$, where $T$ is a torus and $A$ is an abelian variety (see \cite[Theorems 4.3.2 and 5.3.1]{AG}).

For a scheme $X$, we will denote $X_{\sm}$ the \emph{smooth site} over $X$, i.e. the site whose underlying category is the category of smooth schemes over $X$ and whose coverings are the \emph{jointly surjective families} of smooth maps, i.e. families of smooth maps $\{U_i\to V\}_{i\in I}$ such that $\coprod_{i\in I}U_i\to V$ is surjective.

Throughout this article, $k$ will be a field of characteristic zero and all locally algebraic groups will be commutative. Moreover, we do not assume that a (locally) compact topological group is Hausdorff. In this context, a topological group is \emph{locally compact} if the identity element has a neighborhood basis composed by compact neighborhoods.


\section{Preliminaries}

For the convenience of the reader, in this section we compile well-known results related to algebraic groups. Throughout this section, $k$ will be a field of characteristic zero and all locally algebraic groups will be commutative. 


\subsection{Derived Cartier duality}

Every locally algebraic group $G$ may be canonically regarded as a sheaf on $k_{\fppf}$ by taking its functor of points $G(-)$. Grothendieck proved that this assignment defines canonically a functor $F:\s{C}_k\to S(k_{\fppf})$, which is fully faithful. For our purposes, $k$ has characteristic zero and, therefore, every locally algebraic group $G$ is smooth over $k$. Then, $F$ factors through $\s{C}_k\to S(k_{\sm})$. \\

From now on, we will work over a field $k$. We define the \emph{derived Cartier dual functor} as 
\begin{align} \label{derived Cartier dual}
	\begin{split}
		(-)^{\D} : D(k_{\sm}) &\to D(k_{\sm})  \\
		\s{F} &\mapsto R\sHom_{k_{\sm}}(\s{F},\G_{m,k}),
	\end{split}
\end{align}
where $\sHom_{k_{\sm}}(-,\G_{m,k})$ is the functor that sends a sheaf $\s{F}$ in $S(k_\sm)$ to the sheaf of homomorphism
	\begin{align*}
		U\mapsto \Hom_{k_{\sm}}(\s{F}|_U,\G_{m,U}).
	\end{align*}
Observe that, for any complex $\s{F}$ in $D(k_{\sm})$, the following isomorphisms of abelian groups hold
\begin{equation*} 
	H^i(k_{\sm},\s{F}^\D) \cong \Ext_{k_{\sm}}^i(\s{F},\G_m)
\end{equation*}
for all $i>0$. The derived Cartier dual functor is closely related to the classical Cartier dual functor, defined as 
\[ L^D:=\sHom_{k_{\fppf}}(L,\G_m) \]
for an affine locally algebraic group $L$. We will focus in studying the values of $(-)^\D$ for certain kind of complexes $\s{C}$. Specifically we will be interested in the case when $\s{C}$ is a complex concentrated in degree zero by a sheaf represented by a locally algebraic group. \\

Recall that, when $k$ has characteristic zero, every connected commutative algebraic group $G$ is an extension
\begin{equation} \label{alg group decomp}
	0 \to T\times \G_a^n \to G \to A \to 0,
\end{equation}
for some $n\in\N$, where $T$ is a torus and $A$ is an abelian variety (see \cite[Theorems 4.3.2 and 5.3.1]{AG}). Moreover, when $k$ is perfect, the cohomology group $H^i(k,\G_a^D)$ is trivial for all $i$ (see \cite[Proposition 2.5.1]{Rosengarten}). Hence, 

\begin{proposition} 
	Let $L$ be a commutative linear $k$-group and let $U$ be its unipotent part. Then $H^i(k,L^D)$ is isomorphic as abelian group to $H^i(k,(L/U)^D)$ for all $i$.
\end{proposition}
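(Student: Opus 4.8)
The plan is to localize the computation at the unipotent part $U$ and to show that, after Cartier dualizing, $U$ contributes nothing to cohomology, so that the quotient map $L \twoheadrightarrow L/U$ induces the asserted isomorphism. The two structural inputs I would use are both special to characteristic zero. First, since $k$ is perfect, the structure theory of commutative affine algebraic groups provides a direct product decomposition $L \cong U \times M$, where $U$ is the unipotent part of $L$ and $M := L/U$ is a group of multiplicative type. Second, in characteristic zero every commutative unipotent group is a vector group, so $U \cong \G_a^n$ with $n = \dim U$.

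Next I would apply the Cartier dual functor $(-)^D = \sHom_{k_{\fppf}}(-,\G_m)$. Since $\sHom_{k_{\fppf}}(-,\G_m)$ carries a finite direct product of commutative group schemes to the direct product of the corresponding sheaf-Homs (a homomorphism out of a biproduct being the same as a pair of homomorphisms, as $\G_m$ is commutative), the decomposition $L \cong U \times M$ gives $L^D \cong M^D \times U^D$ with $U^D \cong (\G_a^D)^{\oplus n}$. Taking cohomology and using that $H^i(k,-)$ is additive yields, for every $i$,
\[ H^i(k,L^D) \cong H^i(k,(L/U)^D) \oplus H^i(k,\G_a^D)^{\oplus n}, \]
and under this identification the first summand is precisely the image of the map on cohomology induced by $(L/U)^D \to L^D$, itself dual to the projection $L \twoheadrightarrow L/U$.

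The last step is to discard the unipotent contribution: by the cited fact, $H^i(k,\G_a^D)=0$ for all $i$ because $k$ is perfect, so the right-hand summand vanishes and $H^i(k,L^D) \cong H^i(k,(L/U)^D)$ for all $i$, as desired. I expect this final vanishing to be the only substantive point — it is exactly where the hypothesis on $k$ enters, via the cited computation — while the rest of the argument is formal once the product decomposition is in hand. If one wished to avoid appealing to the splitting of $L$, the same conclusion could be reached by applying the derived Cartier dual $(-)^\D$ to the distinguished triangle attached to $0 \to U \to L \to L/U \to 0$ and running the resulting long exact cohomology sequence; this route, however, requires the vanishing of the cohomology of the full derived dual $(\G_a^n)^\D$, i.e. the derived refinement of the cited fact, so I would prefer the product decomposition, for which the classical statement about $\G_a^D$ suffices.
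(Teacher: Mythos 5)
Your proposal is correct and follows essentially the same route the paper intends: the paper offers no separate proof, deriving the proposition (``Hence,'') directly from the decomposition of a commutative linear group over a perfect field into its multiplicative-type and unipotent parts, with $U\cong\G_a^n$ in characteristic zero, together with the cited vanishing $H^i(k,\G_a^D)=0$. Your additional remark correctly identifies that the underived statement about $\G_a^D$ suffices once the product decomposition is used, which is exactly the reading the paper's ``Hence'' presupposes.
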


The following result can be found in \cite[Lemma 1.3.3]{CTSan}.

\begin{lemma} \label{cts}
	Let $G$ be an algebraic $k$-group of multiplicative. The sheaves $\sExt^i_{k_{\fppf}}(G^D,\G_{m,k})$ are trivial for $i>0$. 
\end{lemma}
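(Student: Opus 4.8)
The plan is to exploit the antiequivalence recalled just before the statement. Since $G$ is of multiplicative type, its Cartier dual $N:=G^D$ is an étale locally constant group scheme whose associated abelian group $M$ is finitely generated. Thus the lemma is equivalent to the assertion that $\sExt^i_{k_{\fppf}}(N,\G_{m,k})=0$ for all $i>0$, for every such $N$. The first move is to reduce to the case where $N$ is constant. By definition of étale locally constant, there is a finite separable extension $L/k$ with $N_L:=N\times_{\Spec k}\Spec L\cong M_{\Spec L}$ a constant group scheme. The morphism $f:\Spec L\to\Spec k$ is finite étale, hence faithfully flat, so $f^*$ is exact and $f^*\G_{m,k}=\G_{m,L}$; consequently the formation of the sheaves $\sExt^\bullet$ commutes with this flat base change, giving $f^*\sExt^i_{k_{\fppf}}(N,\G_{m,k})\cong\sExt^i_{L_{\fppf}}(M_{\Spec L},\G_{m,L})$. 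Since $\{\Spec L\to\Spec k\}$ is an fppf covering, any abelian sheaf on $k_{\fppf}$ whose restriction to $L_{\fppf}$ vanishes is itself zero (apply the sheaf condition over each $U\to\Spec k$ to the cover $U\times_k L\to U$). It therefore suffices to prove the vanishing when $N$ is constant.

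In the constant case I would write the associated finitely generated abelian group as $\Z^r\oplus\bigoplus_j\Z/n_j$. Because $\sExt^i_{k_{\fppf}}(-,\G_{m,k})$ carries finite direct sums in the first variable to finite direct sums, it is enough to treat $N=\Z$ and $N=\Z/n$. For $N=\Z$ the functor $\sHom_{k_{\fppf}}(\Z,-)$ is the identity, hence exact, so $\sHom_{k_{\fppf}}(\Z,\G_{m,k})=\G_{m,k}$ and $\sExt^i_{k_{\fppf}}(\Z,\G_{m,k})=0$ for all $i>0$. For $N=\Z/n$ I would apply the contravariant cohomological $\delta$-functor $\sExt^\bullet_{k_{\fppf}}(-,\G_{m,k})$ to the short exact sequence $0\to\Z\xrightarrow{\ n\ }\Z\to\Z/n\to0$. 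Using the computation for $\Z$, the long exact sequence degenerates to
\[ 0\to\sHom_{k_{\fppf}}(\Z/n,\G_{m,k})\to\G_{m,k}\xrightarrow{\ n\ }\G_{m,k}\to\sExt^1_{k_{\fppf}}(\Z/n,\G_{m,k})\to0, \]
while for $i\geq2$ the group $\sExt^i_{k_{\fppf}}(\Z/n,\G_{m,k})$ is trapped between $\sExt^{i-1}_{k_{\fppf}}(\Z,\G_{m,k})$ and $\sExt^i_{k_{\fppf}}(\Z,\G_{m,k})$, both of which vanish. Finally, the $n$-th power map $\G_{m,k}\xrightarrow{n}\G_{m,k}$ is an epimorphism of fppf sheaves, since it fits in the Kummer sequence $1\to\mu_n\to\G_m\xrightarrow{n}\G_m\to1$, which is exact for the fppf topology; hence its cokernel vanishes and $\sExt^1_{k_{\fppf}}(\Z/n,\G_{m,k})=0$, completing the argument.

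The main obstacle is the reduction step, namely justifying that $\sExt^\bullet$ commutes with the flat base change $f$. This is where the finite presentation of $N$ and the flatness of $f$ are used, so that $f^*$ is exact and transforms a convenient resolution of the first argument into one over $L$; with that in hand the descent of the vanishing along the fppf cover $\Spec L\to\Spec k$ is formal. A secondary point worth flagging is that one must work on the fppf site rather than the étale site, so that the $n$-th power map on $\G_m$ is genuinely an epimorphism; in characteristic zero this is automatic, but phrasing everything over $k_{\fppf}$ keeps the argument uniform and matches the statement.
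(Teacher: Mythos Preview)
The paper does not give its own proof of this lemma: it simply cites \cite[Lemma 1.3.3]{CTSan}. Your argument is correct and is essentially the classical one given there---reduce to the constant case by étale-local triviality of $G^D$, then handle $\Z$ and $\Z/n$ separately via the Kummer sequence.

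One small clarification on the reduction step you flagged: the reason $f^*\sExt^i_{k_{\fppf}}(N,\G_{m,k})\cong\sExt^i_{L_{\fppf}}(N_L,\G_{m,L})$ is not really ``finite presentation of $N$ and flatness of $f$'' as you suggest. On the big fppf site, the restriction functor $f^*$ to the slice over $\Spec L$ has an exact left adjoint $f_!$, hence $f^*$ is exact \emph{and} preserves injectives; this is what makes $f^*R\sHom\cong R\sHom(f^*{-},f^*{-})$ hold without further hypotheses on $N$ or $f$. With that point made precise, your proof is complete.
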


Thus we can deduce the triviality of the sheaves of extension of finite étale commutative algebraic groups over a field.

\begin{corollary} \label{vanishing ext sheaves}
	Let $F$ be a finite étale commutative algebraic $k$-group. The sheaves $\sExt_{k_{\fppf}}^i(F,\G_m)$ are trivial for all $i>0$.
\end{corollary}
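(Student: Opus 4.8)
The plan is to realize $F$ as the Cartier dual of an algebraic group of multiplicative type and then invoke Lemma \ref{cts} directly. Since $F$ is finite étale, there is a finite separable extension $L/k$ over which $F$ becomes constant, say $F\times_{\Spec k}\Spec L\cong M_{\Spec L}$ for a finite abelian group $M$. First I would form the (classical) Cartier dual $G:=F^D=\sHom_{k_{\fppf}}(F,\G_m)$, which is again a finite affine $k$-group scheme. Base-changing to $L$ and using that Cartier duality commutes with base change gives $G\times_{\Spec k}\Spec L\cong\sHom_{L_{\fppf}}(M_{\Spec L},\G_m)$, the diagonalisable group attached to $M$; hence $G$ becomes diagonalisable over $L$ and is therefore an algebraic group of multiplicative type over $k$.

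Next I would use the involutivity of Cartier duality on finite commutative group schemes, i.e. the canonical biduality isomorphism $G^D\cong F$. This identifies $F$ with $G^D$ for the multiplicative type group $G$ just constructed, so that Lemma \ref{cts} applies verbatim and yields
\[ \sExt^i_{k_{\fppf}}(F,\G_m)\cong\sExt^i_{k_{\fppf}}(G^D,\G_m)=0 \]
for all $i>0$, which is exactly the assertion.

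The two steps that demand care, rather than genuine difficulty, are the duality facts that $G=F^D$ is of multiplicative type and that the double dual recovers $F$. Both are standard consequences of the antiequivalence between groups of multiplicative type with finitely generated character group and étale locally constant group schemes with finitely generated associated abelian group, the functor $(-)^D$ being its own quasi-inverse; in characteristic zero there are no inseparability subtleties, since by Cartier's theorem every finite $k$-group scheme is automatically étale. The only real point to watch is that the vanishing of the $\sExt$ sheaves is insensitive to whether we view $F$ as a group scheme or as its associated $\fppf$ sheaf, but this is immediate from the definition of $\sExt_{k_{\fppf}}$ and the full faithfulness of the functor sending a (locally) algebraic $k$-group to its functor of points. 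I do not expect any essential obstacle beyond correctly invoking these standard structural results.
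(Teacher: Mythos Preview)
Your proposal is correct and follows essentially the same route as the paper: recognize that $F$ is \'etale locally constant so that $G:=F^D$ is of multiplicative type, then use the biduality $G^D\cong F$ to reduce the vanishing of $\sExt^i_{k_{\fppf}}(F,\G_m)$ to Lemma~\ref{cts}. The paper's proof is simply a more terse version of the same argument.
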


\begin{proof}
	As $F$ is a finite étale group scheme over $k$, we have that $F$ is finite étale locally constant \cite[\href{https://stacks.math.columbia.edu/tag/03RV}{Lemma 03RV}]{stacks-project}. Then $F^D$ is of multiplicative type. Using the natural isomorphism $(F^D)^D\cong F$ and Lemma \ref{cts}, we conclude that the sheaves $\sExt_{k_{\fppf}}^i(F,\G_{m,k})$ are trivial for all $i>0$.
\end{proof}

Thus, the following result follows.

\begin{corollary} \label{finite groups}
	Let $F$ be a finite étale commutative algebraic $k$-group. The following isomorphism holds in $D^b(k_{\fppf})$
	\[ R\sHom_{k_{\fppf}}(F,\G_{m,k}) \cong F^D. \]
\end{corollary}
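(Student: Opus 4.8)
The plan is to prove that $R\sHom_{k_{\fppf}}(F,\G_{m,k}) \cong F^D$ in $D^b(k_{\fppf})$ for a finite étale commutative algebraic $k$-group $F$. The target statement is essentially the claim that the derived functor $R\sHom_{k_{\fppf}}(F,-)$ applied to $\G_{m,k}$ collapses to its zeroth cohomology sheaf, which by definition is the classical Cartier dual $F^D = \sHom_{k_{\fppf}}(F,\G_{m,k})$.

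First I would recall the standard hyper-cohomology spectral sequence relating the derived sheaf-Hom to the sheaf-Ext groups. For any object, there is a canonical truncation map and, more importantly, the object $R\sHom_{k_{\fppf}}(F,\G_{m,k})$ has cohomology sheaves $\mathcal{H}^i\bigl(R\sHom_{k_{\fppf}}(F,\G_{m,k})\bigr) \cong \sExt_{k_{\fppf}}^i(F,\G_{m,k})$ for each $i \geq 0$. This identification is exactly the definition of the Ext-sheaves as the derived functors of $\sHom$. Hence the object $R\sHom_{k_{\fppf}}(F,\G_{m,k})$ is concentrated in nonnegative degrees with these cohomology sheaves.

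The key step is then to invoke Corollary \ref{vanishing ext sheaves}, which asserts that $\sExt_{k_{\fppf}}^i(F,\G_m) = 0$ for all $i>0$. Combining this with the identification above, every cohomology sheaf of $R\sHom_{k_{\fppf}}(F,\G_{m,k})$ in positive degree vanishes, so the complex is quasi-isomorphic to its zeroth cohomology sheaf placed in degree $0$. That zeroth cohomology sheaf is $\sHom_{k_{\fppf}}(F,\G_{m,k}) = F^D$. Therefore the canonical truncation morphism $F^D = \tau_{\leq 0}\,R\sHom_{k_{\fppf}}(F,\G_{m,k}) \to R\sHom_{k_{\fppf}}(F,\G_{m,k})$ is a quasi-isomorphism, giving the desired isomorphism in $D^b(k_{\fppf})$.

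I do not expect a genuine obstacle here, since the argument is a formal consequence of the vanishing of higher Ext-sheaves already established in Corollary \ref{vanishing ext sheaves}; the only care needed is in correctly identifying the cohomology sheaves of the derived Hom complex with the Ext-sheaves and noting that a bounded-below complex with vanishing higher cohomology is quasi-isomorphic to its single nonzero cohomology sheaf.
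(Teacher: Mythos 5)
Your proposal is correct and follows exactly the paper's argument: identify the cohomology sheaves of $R\sHom_{k_{\fppf}}(F,\G_{m,k})$ with the Ext-sheaves $\sExt^i_{k_{\fppf}}(F,\G_{m,k})$, invoke Corollary \ref{vanishing ext sheaves} to kill the positive-degree ones, and conclude the complex collapses to $F^D$ in degree zero. No differences worth noting.
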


\begin{proof}
	We only have to show that $\sExt^i_{k_{\fppf}}(F,\G_{m,k})$ are trivial for all $i>0$. Hence we conclude from Corollary \ref{vanishing ext sheaves}.
\end{proof}

Let $G$ be an algebraic group of multiplicative type over $k$. Let $M$ be the finitely generated abelian group associated to $G^D$. Let $M_{\mathrm{tors}}$ be the torsion subgroup of $M$. Then we have an exact sequence of abelian groups
\[ 0 \to M_{\mathrm{tors}} \to M \to M/M_{\mathrm{tors}} \to 0, \]
where $M_{\mathrm{tors}}$ is finite and $M/M_{\tors}$ is isomorphic to $\Z^r$ for some $r\in\Z$. Hence, $G$ can be regarded as an extension 
\[ 0 \to T \to G \to F \to 0, \]
where $F$ is a finite group scheme such that $F^D$ is étale locally isomorphic to $M_{\mathrm{tors}}$ and $T$ is a torus of dimension $r$.

\begin{proposition} \label{dual plano}
	Let $G$ be an algebraic $k$-group of multiplicative type and $A$ an abelian $k$-variety. Then, the following isomorphisms hold in $D^b(k_{\fppf})$:
	\begin{enumerate}[nosep, label=\alph*)]
		\item $\tau_{\leq 1}R\sHom_{k_{\fppf}}(G,\G_{m,k}) \cong G^D$;
		\item $\tau_{\leq 1}R\sHom_{k_{\fppf}}(G^D,\G_{m,k}) \cong G$;
		\item $\tau_{\leq 1}R\sHom_{k_{\fppf}}(A,\G_{m,k}) \cong A^t[-1]$, where $A^t$ is the dual abelian variety of $A$.
	\end{enumerate}
\end{proposition}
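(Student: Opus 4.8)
The plan is to read off, in every case, only the cohomology sheaves in degrees $0$ and $1$ of the complex $R\sHom_{k_{\fppf}}(-,\G_{m,k})$, since these are exactly what $\tau_{\leq 1}$ retains; recall that $H^0 = \sHom_{k_{\fppf}}(-,\G_{m,k})$ and $H^i = \sExt^i_{k_{\fppf}}(-,\G_{m,k})$. Part b) is the most direct. Cartier biduality gives $\sHom_{k_{\fppf}}(G^D,\G_{m,k}) = (G^D)^D \cong G$, so $H^0 \cong G$, while Lemma \ref{cts} yields $\sExt^i_{k_{\fppf}}(G^D,\G_{m,k}) = 0$ for every $i>0$. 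Hence $R\sHom_{k_{\fppf}}(G^D,\G_{m,k})$ is already concentrated in degree $0$ and equal to $G$, so truncation is vacuous here.

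For part a), the equality $H^0 = \sHom_{k_{\fppf}}(G,\G_{m,k}) = G^D$ is the definition of the Cartier dual, so the only task is to prove $\sExt^1_{k_{\fppf}}(G,\G_{m,k}) = 0$. I would use the extension $0\to T\to G\to F\to 0$ recalled above, with $T$ a torus and $F$ finite; since $\char k = 0$, the group $F$ is finite étale, so Corollary \ref{finite groups} gives $\sExt^1_{k_{\fppf}}(F,\G_{m,k}) = 0$. For the torus, restricting along a finite separable $k'/k$ that splits $T$ reduces $\sExt^1_{k_{\fppf}}(T,\G_{m,k})$ to $\sExt^1(\G_m^r,\G_m)$, which vanishes by the classical computation $\sExt^1(\G_m,\G_m) = 0$; as $\sExt$ commutes with étale restriction and a sheaf vanishing étale-locally is zero, this gives $\sExt^1_{k_{\fppf}}(T,\G_{m,k}) = 0$. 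The long exact sequence of $\sExt^\bullet_{k_{\fppf}}(-,\G_{m,k})$ attached to $0\to T\to G\to F\to 0$ then forces $\sExt^1_{k_{\fppf}}(G,\G_{m,k}) = 0$, whence $\tau_{\leq 1}R\sHom_{k_{\fppf}}(G,\G_{m,k}) \cong G^D$.

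For part c), properness of $A$ and affineness of $\G_m$ force $\sHom_{k_{\fppf}}(A,\G_{m,k}) = 0$, since an abelian variety admits no nonconstant morphism to an affine scheme; thus $H^0 = 0$. The essential input is the Barsotti--Weil formula, which identifies $H^1 = \sExt^1_{k_{\fppf}}(A,\G_{m,k})$ with $\Pic^0_{A/k} = A^t$. Consequently $\tau_{\leq 1}R\sHom_{k_{\fppf}}(A,\G_{m,k})$ has vanishing $H^0$ and $H^1\cong A^t$, i.e. it is isomorphic to $A^t[-1]$. The truncation is indispensable here: unlike in a) and b), the higher sheaves $\sExt^i_{k_{\fppf}}(A,\G_{m,k})$ with $i\geq 2$ need not vanish, and $\tau_{\leq 1}$ is precisely what discards them.

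The hard part will be part c): everything rests on the Barsotti--Weil identification $\sExt^1_{k_{\fppf}}(A,\G_{m,k})\cong A^t$, the one genuinely deep classical ingredient, together with the observation that $\tau_{\leq 1}$ is exactly what suppresses the nonvanishing higher extension sheaves of $A$. In part a) the only delicate technical point is the étale-local reduction for the torus, which relies on the compatibility of the sheaf-$\sExt$ with étale base change.
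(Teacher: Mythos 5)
Your proof is correct and follows essentially the same route as the paper: the same decomposition $0\to T\to G\to F\to 0$, Corollary \ref{finite groups} for the finite part, the vanishing of $\sExt^1_{k_{\fppf}}(\G_{m,k},\G_{m,k})$ after an \'etale base change for the torus, Lemma \ref{cts} for b), and the Barsotti--Weil formula for c). The only (cosmetic) difference is that you read off the cohomology sheaves in degrees $0$ and $1$ directly from the long exact sequence of $\sExt$-sheaves, whereas the paper truncates the corresponding exact triangle and invokes a lemma of Brochard to justify that the truncated triangle stays exact; the mathematical inputs are identical.
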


\begin{proof}
	\begin{enumerate}[nosep, label=\alph*)]
		\item Let $T$ be the toric part of $G$ and $F$ the finite quotient of $G$ by $T$. We have the following exact triangle 
		\begin{equation} \label{multiplicative}
			T\to G \to F \to T[1]. 
		\end{equation}
		Applying the functor $R\sHom_{k_{\fppf}}(-,\G_{m,k})$ to \eqref{multiplicative}, we get the following exact triangle
		\[ R\sHom_{k_{\fppf}}(F,\G_{m,k}) \to R\sHom_{k_{\fppf}}(G,\G_{m,k}) \to R\sHom_{k_{\fppf}}(T,\G_{m,k}) \to R\sHom_{k_{\fppf}}(F,\G_{m,k})[1]. \]
		As $G$ is smooth, $F$ is étale. Thus, by Corollary \ref{finite groups}, the exact triangle above becomes
		\[ F^D \to R\sHom_{k_{\fppf}}(G,\G_{m,k}) \to R\sHom_{k_{\fppf}}(T,\G_{m,k}) \to F^D[1]. \]
		Applying the truncation $\tau_{\leq 1}$ to the triangle here above yields the following (possibly non-exact) triangle 
		\begin{equation} \label{triangle tori}
			F^D \to \tau_{\leq 1}R\sHom_{k_{\fppf}}(G,\G_{m,k}) \to \tau_{\leq 1}R\sHom_{k_{\fppf}}(T,\G_{m,k}) \to F^D[1].
		\end{equation}
		In fact, triangle \eqref{triangle tori} is exact. In order to prove this assertion, by \cite[Lemma 3.10]{Brochard}, it is sufficient to prove that the sheaf $\sExt_{k_{\fppf}}^1(T,\G_{m,k})$ is trivial. After an étale base change, it is sufficient to prove that the sheaf $\sExt_{k_{\fppf}}^1(\G_{m,k},\G_{m,k})$ is trivial. This last assertion is true by the proof of \cite[Exposé VIII, Proposition 3.3.1]{SGA7I}. Thus, triangle \eqref{triangle tori} turns into
		\[ F^D \to \tau_{\leq 1}R\sHom_{k_{\fppf}}(G,\G_{m,k}) \to T^D \to F^D[1]. \]
		Therefore, $H^1(\tau_{\leq 1}R\sHom_{k_{\fppf}}(G,\G_{m,k}))$ is isomorphic to zero in $D(k_{\fppf})$. Consequently, $\tau_{\leq 1}R\sHom_{k_{\fppf}}(G,\G_{m,k})$ is isomorphic to $G^D$ in $D(k_{\fppf})$.
		
		\item This follows from Lemma \ref{cts}.
		
		\item Since there are no non constant maps from an abelian variety to an affine scheme, the sheaf $\sHom_{k_{\fppf}}(A,\G_{m,k})$ is trivial (see \cite[Exposé VII, \S 1.3.8]{SGA7I}). Finally, by the Barsotti-Weil formula, the sheaf $\sExt^1_{k_{\fppf}}(A,\G_m)$ is naturally isomorphic to the dual abelian variety $A^t$ of $A$ (see, for instance, \cite[Corollary 3.5]{RR}). Whence we conclude that
		\[ \tau_{\leq 1}R\sHom_{k_{\fppf}}(A,\G_{m,k}) \cong \sExt^1_{k_{\fppf}}(A,\G_m)[-1] \cong A^t[-1] \]
		in $D^b(k_{\fppf})$.
	\end{enumerate}
\end{proof}

\begin{lemma} \label{injectivity on ext}
	Let $G$ be a connected algebraic $k$-group and let $n$ be a positive integer. The multiplication by $n$ on $\sExt_{k_{\fppf}}^i(G,\G_{m,k})$ is injective for all $i>1$.
\end{lemma}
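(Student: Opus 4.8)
The plan is to compare multiplication by $n$ with the Kummer sequence of $G$ and to feed the result into the vanishing recorded in Corollary \ref{vanishing ext sheaves}.

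First I would produce the short exact sequence of abelian sheaves on $k_{\fppf}$
\[ 0 \to G[n] \to G \xrightarrow{[n]} G \to 0, \]
where $G[n]=\ker([n])$. To do this, recall that in characteristic zero $G$ is smooth and sits in an extension $0\to T\times\G_a^m\to G\to A\to 0$ with $T$ a torus and $A$ an abelian variety. Multiplication by $n$ is an isomorphism on $\G_a^m$ and is a faithfully flat epimorphism of fppf sheaves on both $T$ and $A$, with finite étale kernels $\mu_n^{\dim T}$ and $A[n]$ (every finite group scheme in characteristic zero is étale). Applying the snake lemma to the endomorphism $[n]$ of the extension, I conclude that $[n]\colon G\to G$ is an epimorphism of fppf sheaves and that $G[n]$ is a finite étale commutative $k$-group, being an extension of $A[n]$ by $(T\times\G_a^m)[n]$.

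Next I would observe that, as an endomorphism of the abelian sheaf $G$, the map $[n]$ is literally $n\cdot\id_G$. Since $R\sHom_{k_{\fppf}}(-,\G_{m,k})$ is an additive triangulated functor, applying it to the displayed short exact sequence yields a distinguished triangle
\[ R\sHom_{k_{\fppf}}(G,\G_{m,k}) \xrightarrow{\;n\;} R\sHom_{k_{\fppf}}(G,\G_{m,k}) \to R\sHom_{k_{\fppf}}(G[n],\G_{m,k}) \to R\sHom_{k_{\fppf}}(G,\G_{m,k})[1], \]
in which the first arrow is multiplication by $n$. Taking cohomology sheaves gives the long exact sequence
\[ \cdots \to \sExt^{i-1}_{k_{\fppf}}(G[n],\G_{m,k}) \to \sExt^i_{k_{\fppf}}(G,\G_{m,k}) \xrightarrow{n} \sExt^i_{k_{\fppf}}(G,\G_{m,k}) \to \cdots, \]
so that the kernel of multiplication by $n$ on $\sExt^i_{k_{\fppf}}(G,\G_{m,k})$ is the image of $\sExt^{i-1}_{k_{\fppf}}(G[n],\G_{m,k})$, hence a quotient of it.

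Finally, since $G[n]$ is finite étale, Corollary \ref{vanishing ext sheaves} gives $\sExt^{j}_{k_{\fppf}}(G[n],\G_{m,k})=0$ for every $j>0$; taking $j=i-1$ with $i>1$ kills the group controlling the kernel, and injectivity follows. The only genuinely delicate points are the two claims feeding into the Kummer sequence — that $[n]$ is an epimorphism of fppf sheaves (and not merely surjective on geometric points) and that its kernel is finite étale — together with the identification of $[n]^{\ast}$ with multiplication by $n$; all three are where the characteristic-zero hypothesis and the additivity of $R\sHom_{k_{\fppf}}(-,\G_{m,k})$ enter, and I expect verifying the exactness of the Kummer sequence to be the main point to nail down.
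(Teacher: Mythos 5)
Your proof is correct, and it is organized differently from the paper's. The paper proves the lemma by declaring that one ``may work case by case'' when $G$ is an abelian variety, a torus, or $\G_{a,k}$, and then runs the Kummer/isogeny argument separately on each piece (reducing the torus case to $\G_{m,k}$ and using that $[n]$ is an isomorphism on $\G_{a,k}$ in characteristic zero); the reduction from a general connected commutative $G$ to these three building blocks is asserted rather than carried out, and making it precise requires a four-lemma--type chase through the $\sExt$ long exact sequences of the extension $0\to T\times\G_a^m\to G\to A\to 0$. You instead build the Kummer sequence $0\to G[n]\to G\xrightarrow{[n]}G\to 0$ for $G$ itself, verify via the snake lemma that $[n]$ is an fppf epimorphism with finite étale commutative kernel, and then apply the vanishing of Corollary \ref{vanishing ext sheaves} once to $\sExt^{i-1}_{k_{\fppf}}(G[n],\G_{m,k})$. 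The underlying mechanism (additivity of $R\sHom_{k_{\fppf}}(-,\G_{m,k})$ identifying $[n]^{*}$ with multiplication by $n$, plus triviality of the Ext-sheaves of the finite étale kernel) is the same as the paper's, but your unified treatment avoids the unexplained reduction step entirely, at the modest cost of having to check that $[n]$ is surjective as a map of fppf sheaves on $G$ and that $G[n]$ is finite étale --- both of which your snake-lemma argument and Cartier's theorem in characteristic zero handle correctly. One cosmetic point: for a non-split torus the kernel of $[n]$ is a form of $\mu_n^{\dim T}$ rather than $\mu_n^{\dim T}$ itself, but it is still finite of multiplicative type and hence étale in characteristic zero, so nothing breaks.
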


\begin{proof}
	Since $k$ has characteristic zero, we may work case by case when $G$ is an abelian variety $A$, a torus $T$ or the additive group $\G_{a,k}$.
	\begin{itemize}
		\item For a $\fppf$ covering $S\to\Spec k$, following isomorphism holds
		\[ \sExt_{k_{\fppf}}^i(T,\G_{m,k})|_S \cong \sExt_{S_{\fppf}}^i(T_S,\G_{m,S}). \]
		Then, as the functor $\sExt_{k_{\fppf}}^i(-,\G_{m,k})$ commutes with finite direct sums, it is enough to check the injectivity of multiplication by $n$ when $T=\G_{m,k}$. We have the Kummer exact sequence
		\begin{equation} \label{Kummer}
			0 \to \mu_n \to \G_{m,k} \overset{(-)^n}{\to} \G_{m,k} \to 0
		\end{equation}
		for all $n\in\N$. Applying the derived functor $R\sHom_{k_{\fppf}}(-,\G_{m,k})$ to \eqref{Kummer}, we get the following exact sequence
		\[ \sExt_{k_{\fppf}}^{i-1}(\mu_n,\G_{m,k}) \to \sExt_{k_{\fppf}}^i(\G_{m,k},\G_{m,k}) \overset{\cdot n}{\to} \sExt_{k_{\fppf}}^i(\G_{m,k},\G_{m,k}), \]
		where the rightside map is multiplication by $n$, which is induced by the $n$-th power map $(-)^n:\G_{m,k}\to\G_{m,k}$. Then, applying the Proposition \ref{finite groups} to the finite commutative group scheme $\mu_n$, we conclude that multiplication by $n$ on  $\sExt_{k_{\fppf}}^i(\G_{m,k},\G_{m,k})$ is injective for all $i>1$. 
		
		\item We know that multiplication by $n$ defines an isogeny from $A$ onto $A$. \cite[\href{https://stacks.math.columbia.edu/tag/03RP}{Proposition 03RP}]{stacks-project}. Thus, we have the following exact sequence
		\begin{equation} \label{n torsion}
			0 \to A[n] \to A \overset{\cdot n}{\to} A \to 0,
		\end{equation}
		where the $n$-torsion $A[n]$ of $A$. Applying the derived functor $R\sHom_{k_{\fppf}}(-,\G_{m,k})$ to \eqref{n torsion}, we get the following exact sequence
		\[ \sExt_{k_{\fppf}}^{i-1}(A[n],\G_{m,k}) \to \sExt_{k_{\fppf}}^i(A,\G_{m,k}) \overset{\cdot n}{\to} \sExt_{k_{\fppf}}^i(A,\G_{m,k}), \]
		where the rightside map is multiplication by $n$, which is induced by multiplication by $n$ in $A$. Since $A[n]$ is finite, we conclude similarly to before above that multiplication by $n$ on $\sExt_{k_{\fppf}}^i(A,\G_{m,k})$ are trivial for all $i>1$.
		
		\item Since $k$ has characteristic zero, multiplication by $n$ defines an isomorphism $\G_a\to\G_a$ for every positive integer $n$. This implies that multiplication by $n$ on $\sExt_{k_{\fppf}}^i(\G_a,\G_{m,k})$ is an isomorphism for every $i\geq 1$.
	\end{itemize}
\end{proof}

The following results ensure the triviality of some sheaves of extensions on the smooth site.

\begin{lemma} \label{triviality of ext}
	Let $L$ be a connected linear algebraic $k$-group and let $A$ be an abelian $k$-variety. The sheaves $\sExt_{k_{\sm}}^i(L,\G_{m,k})$ and $\sExt_{k_{\sm}}^{i+1}(A,\G_{m,k})$ are trivial for all $i\geq 1$.
\end{lemma}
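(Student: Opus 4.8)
The plan is to reduce $L$ to its elementary constituents, and to transport the vanishing statements from the fppf site — where every preliminary computation was carried out — to the smooth site. Since $\char k = 0$, the group $L$ is smooth: its identity component $L^0$ splits as a product $T\times U$ of a torus $T$ and a unipotent group $U\cong\G_{a,k}^n$, while the component group $\pi_0(L)=L/L^0$ is finite étale. Applying the long exact sequence obtained from $R\sHom_{k_{\sm}}(-,\G_{m,k})$ to the extension $0\to L^0\to L\to\pi_0(L)\to 0$, together with the additivity of $\sExt$ on the product $T\times U$, reduces the claim for $L$ to the vanishing, for all $i\geq 1$, of the three sheaves $\sExt^i_{k_{\sm}}(\pi_0(L),\G_{m,k})$, $\sExt^i_{k_{\sm}}(T,\G_{m,k})$ and $\sExt^i_{k_{\sm}}(\G_{a,k},\G_{m,k})$; the abelian variety $A$ is handled separately, only in degrees $\geq 2$.

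To pass from $\fppf$ to $\sm$, I would use the morphism of topoi $\pi$ induced by the inclusion $k_{\sm}\hookrightarrow k_{\fppf}$. Since $\G_{m,k}$ is smooth, fppf and étale cohomology of $\G_{m,k}$ agree on smooth test schemes, and positive-degree étale cohomology sheafifies to zero in the smooth (equivalently étale) topology; hence $R\pi_*\G_{m,k}\cong\G_{m,k}$ in $D^b(k_{\sm})$. The adjunction then yields
\[ R\sHom_{k_{\sm}}(G,\G_{m,k})\cong R\pi_*\,R\sHom_{k_{\fppf}}(G,\G_{m,k}) \]
for every smooth commutative $k$-group scheme $G$. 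Because the low-degree fppf Ext-sheaves appearing here ($G^{\D}$, finite étale duals, and $A^t$ by Proposition \ref{dual plano}) are again represented by smooth group schemes, their higher direct images under $\pi$ vanish, so the associated spectral sequence collapses and identifies the smooth Ext-sheaves with the fppf ones. The task thus becomes the fppf vanishing in the stated range.

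For the finite étale part, $\sExt^i_{k_{\fppf}}(\pi_0(L),\G_{m,k})=0$ for $i\geq 1$ is exactly Corollary \ref{vanishing ext sheaves}. For the torus, $\sExt^1_{k_{\fppf}}(T,\G_{m,k})=0$ is already contained in the proof of Proposition \ref{dual plano}(a) (reducing $T$ to $\G_{m,k}$ by étale base change and additivity). In degrees $i\geq 2$ I would run the Kummer sequence $0\to\mu_n\to\G_{m,k}\xrightarrow{n}\G_{m,k}\to 0$ and, for the abelian variety, the multiplication sequence $0\to A[n]\to A\xrightarrow{n}A\to 0$: since $\mu_n$ and $A[n]$ are finite étale in characteristic zero, Corollary \ref{vanishing ext sheaves} makes the connecting terms vanish, so multiplication by $n$ is \emph{surjective} on $\sExt^i_{k_{\fppf}}(T,\G_{m,k})$ and on $\sExt^i_{k_{\fppf}}(A,\G_{m,k})$ for $i\geq 2$; combined with the injectivity of Lemma \ref{injectivity on ext} this shows multiplication by $n$ is an isomorphism, i.e. these sheaves are uniquely divisible. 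The unipotent case $\sExt^i_{k_{\fppf}}(\G_{a,k},\G_{m,k})$ is uniquely divisible for all $i\geq 1$ by the third bullet of Lemma \ref{injectivity on ext}.

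The main obstacle is precisely the final collapse of these uniquely divisible higher Ext-sheaves to zero: unique divisibility alone does not force vanishing. The decisive extra input is that, once transported to the smooth site, these sheaves become torsion — so that being simultaneously uniquely divisible (hence torsion-free) and torsion forces them to vanish. This is exactly the point at which working over $k_{\sm}$ rather than $k_{\fppf}$ is essential, and verifying this torsion property (equivalently, that passage through $\pi$ annihilates the divisible classes surviving on $k_{\fppf}$) is the step I expect to require the most care; the $\G_{a,k}$-case, where one should also invoke the triviality of $\G_a^{\D}$ in characteristic zero, is the most delicate instance.
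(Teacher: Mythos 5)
Your reduction to $T$, $\G_{a,k}$, $\pi_0(L)$ and $A$, the comparison $R\sHom_{k_{\sm}}(G,\G_{m,k})\cong R\pi_*R\sHom_{k_{\fppf}}(G,\G_{m,k})$, and the unique-divisibility argument via the Kummer and multiplication-by-$n$ sequences are all sound and parallel what the paper does (Lemma \ref{fl sm}, Lemma \ref{injectivity on ext}, Corollary \ref{vanishing ext sheaves}, Proposition \ref{dual plano}). But the proposal stops exactly at the point that carries the entire content of the lemma: you correctly observe that unique divisibility alone cannot force vanishing and that one needs these sheaves to be torsion on the smooth site, and then you leave that torsion property unproved, describing it only as ``the step I expect to require the most care.'' That step is not a routine verification; it is the decisive external input. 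The paper supplies it by citing Breen's theorem (\cite[\S 7 and \S 8]{Breen}) that over a regular noetherian base the groups $\Ext^i_{X_{\sm}}(\G_{m,k},\G_m)$, $\Ext^i_{X_{\sm}}(\G_{a,k},\G_m)$ for $i\geq 1$ and $\Ext^{i+1}_{X_{\sm}}(A,\G_m)$ for $i\geq 1$ are torsion; torsion plus injectivity of multiplication by $n$ then kills everything in degrees $\geq 2$, and the degree-$1$ case for the linear part is finished by Proposition \ref{dual plano}.a). Without Breen's result (or an equivalent computation) your argument does not close.

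A secondary point: the mechanism you propose for obtaining the torsion property --- that ``passage through $\pi$ annihilates the divisible classes surviving on $k_{\fppf}$'' --- is not how this works. The sheaf $\sExt^1_{k_{\fppf}}(\G_{a,k},\G_{m,k})$ is nonzero and uniquely divisible in characteristic zero, and its disappearance on the smooth site is not a formal consequence of the change of site or of any spectral-sequence collapse; it is Breen's direct computation of these Ext sheaves in the \'etale topology (via simplicial resolutions and Eilenberg--MacLane cohomology) that shows they are torsion there. So the gap is a genuinely missing ingredient, not merely an unchecked technicality.
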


\begin{proof}
	It is sufficient to prove that the groups $\Ext_{X_{\sm}}^i(L,\G_{m,X})$ and $\Ext_{X_{\sm}}^{i+1}(A,\G_{m,X})$ are trivial for every smooth $k$-scheme $X$ and $i\geq 1$.
	Breen has proved that over a regular noetherian scheme $X$, the groups $\Ext_{X_{\sm}}^{i+1}(A,\G_{m,X})$ and $\Ext_{X_{\sm}}^i(L,\G_{m,X})$ are torsion for all $i\geq 1$ when $X=\G_{m,k},\ \G_{a,k}$ or an abelian variety $A$ (see \cite[\S 7 and \S8]{Breen}). Thus, by Lemma \ref{injectivity on ext}, we conclude that $\sExt_{k_{\sm}}^i(L,\G_{m,k})$ and $\sExt_{k_{\sm}}^{i}(A,\G_{m,k})$ are trivial for all $i\geq 2$. Moreover, it also holds that $\sExt_{k_{\sm}}^i(\G_{a,k},\G_{m,k})$ is trivial. Hence, by Proposition \ref{dual plano}.a), we conclude that $\sExt_{k_{\sm}}^i(L,\G_{m,k})$ is trivial for $i=1$.
\end{proof}

\begin{remark}
	In the previous lemma it was crucial that the site in which the extension sheaves was computed is the smooth site. In fact, it is not true that these sheaves are trivial on the flat site. Particularly, the sheaf $\sExt_{k_{\fppf}}^1(\G_{a,k},\G_{m,k})$ is not trivial when $k$ has characteristic zero (see \cite[Remark 2.2.16]{Rosengarten}). Furthermore, by Lemma \ref{triviality of ext}, the sheaf $\sExt_{k_{\fppf}}^1(\G_{a,k},\G_{m,k})$ is uniquely divisible.
\end{remark}

The following result relates the values in certain sheaves of the functors $R\sHom_{k_{\fppf}}(-,\G_{m,k})$ and $R\sHom_{k_{\sm}}(-,\G_{m,k})$.

\begin{lemma} \label{fl sm}
	Let $G$ be a smooth group scheme over $k$. Let $\alpha:k_{\fppf}\to k_{\sm}$ be the canonical map of sites. Then we have a canonical isomorphism
	\[ R\alpha_*R\sHom_{k_{\fppf}}(G,\G_{m,k})\cong R\sHom_{k_{\sm}}(G,\G_{m,k}). \]
\end{lemma}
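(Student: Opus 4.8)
The plan is to reduce the statement to a derived internal-$\Hom$ adjunction together with the single computation $R\alpha_*\G_{m,k}\cong\G_{m,k}$. Recall that $\alpha$ comes from the inclusion of underlying categories $k_{\sm}\hookrightarrow k_{\fppf}$, so that $\alpha_*$ is restriction of a sheaf to smooth test schemes and $\alpha^*$ is its left adjoint, which is exact. Consequently $L\alpha^*=\alpha^*$, and the standard sheaf-$\Hom$/pushforward adjunction $\alpha_*\sHom_{k_{\fppf}}(\alpha^*\s{F},\s{G})\cong\sHom_{k_{\sm}}(\s{F},\alpha_*\s{G})$ derives to a natural isomorphism
\[ R\alpha_*R\sHom_{k_{\fppf}}(\alpha^*\s{F},\s{G})\cong R\sHom_{k_{\sm}}(\s{F},R\alpha_*\s{G}) \]
for $\s{F}\in D^b(k_{\sm})$ and $\s{G}\in D^+(k_{\fppf})$, using that $\alpha_*$ preserves injectives (being right adjoint to the exact functor $\alpha^*$). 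I would take $\s{F}=G$ and $\s{G}=\G_{m,k}$.

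Next I would identify $\alpha^*G\cong G$. The sheaf on $k_{\sm}$ attached to the smooth $k$-group scheme $G$ is the restriction of its functor of points $\Hom_k(-,G)$; since $G$ is itself an object of the smooth site, $\alpha^*$ carries the representable (pre)sheaf attached to the object $G$ of $k_{\sm}$ to the representable (pre)sheaf attached to the same object $G$ of $k_{\fppf}$, i.e. to the functor of points of $G$ computed on the flat site. Granting this and the computation $R\alpha_*\G_{m,k}\cong\G_{m,k}$, the displayed adjunction specializes to $R\alpha_*R\sHom_{k_{\fppf}}(G,\G_{m,k})\cong R\sHom_{k_{\sm}}(G,R\alpha_*\G_{m,k})\cong R\sHom_{k_{\sm}}(G,\G_{m,k})$, which is the lemma.

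The crux is therefore $R\alpha_*\G_{m,k}\cong\G_{m,k}$, and this is the step I expect to be the main obstacle. The identity $\alpha_*\G_m=\G_m$ is immediate, so it remains to show $R^q\alpha_*\G_m=0$ for $q\geq 1$. Since $R^q\alpha_*\G_m$ is the sheaf on $k_{\sm}$ associated to the presheaf $U\mapsto H^q(U_{\fppf},\G_m)$, it suffices to prove that every class in $H^q(U_{\fppf},\G_m)$ is killed by a smooth covering of $U$. Here I would factor $\alpha$ through the étale site via $\epsilon:k_{\fppf}\to k_{\et}$: as $\G_m$ is smooth, Grothendieck's comparison theorem gives $R\epsilon_*\G_m\cong\G_m$, hence a natural (in $U$) isomorphism $H^q(U_{\fppf},\G_m)\cong H^q(U_{\et},\G_m)$ compatible with restriction along covers. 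Then I invoke the formal fact that positive-degree sheaf cohomology is locally trivial: any class in $H^q(U_{\et},\G_m)$ with $q\geq 1$ vanishes after restriction to the members of a suitable étale covering of $U$. Since every étale covering is in particular a smooth covering, the matching class in $H^q(U_{\fppf},\G_m)$ dies on a smooth covering, whence $R^q\alpha_*\G_m=0$. The only point requiring care is the naturality of the comparison $H^q_{\fppf}\cong H^q_{\et}$ in $U$, which is exactly what guarantees that killing a class étale-locally kills it fppf-locally.
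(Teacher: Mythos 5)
Your proof is correct, and it supplies an argument where the paper gives none: the paper's entire proof of this lemma is a citation of van Hamel's Lemma 1.3. Your route --- the derived adjunction $R\alpha_*R\sHom_{k_{\fppf}}(\alpha^*\s{F},\s{G})\cong R\sHom_{k_{\sm}}(\s{F},R\alpha_*\s{G})$ for the exact pullback $\alpha^*$, the identification $\alpha^*G\cong G$ for the sheaf represented by the smooth scheme $G$, and the reduction of everything to the single computation $R\alpha_*\G_{m,k}\cong\G_{m,k}$ via the fppf--\'etale comparison for $\G_m$ together with the local triviality of positive-degree cohomology classes --- is exactly the expected proof of the cited lemma, and its key input (vanishing of higher direct images for smooth group schemes, Milne, \'Etale Cohomology III.3.9) is the same one the paper invokes in the proof of Corollary \ref{dual suave}. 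The only step deserving a caveat is your appeal to the exactness of $\alpha^*$: since the underlying category of $k_{\sm}$ consists only of smooth $k$-schemes and is not closed under fibre products, the fact that the inclusion into $k_{\fppf}$ induces a morphism of sites with exact pullback is not completely formal; but this is presupposed by the lemma's phrase ``canonical map of sites,'' so you are entitled to it. Note finally that smoothness of $G$ enters your argument only to guarantee that $G$ is an object of $k_{\sm}$ (hence that $\alpha^*G\cong G$); the comparison theorem is needed only for $\G_m$, which is a genuine economy over quoting it for $G$ itself.
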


\begin{proof}
	This is a particular case of \cite[Lemma 1.3]{vH04}.
\end{proof}

\begin{corollary} \label{dual suave}
	Let $L$ be a linear $k$-group and $A$ an abelian $k$-variety. Then, the following isomorphisms hold in $D^b(k_{\sm})$
	\begin{enumerate}[nosep, label=\alph*)]
		\item $L^\D \cong L^D$;
		\item $A^\D \cong A^t[-1]$, the dual abelian variety of $A$.
	\end{enumerate}
	Furthermore, for a connected algebraic $k$-group $G$, which fits in a extension
	\begin{equation} \label{algebraic group extension}
		0 \to L \to G \to A \to 0,
	\end{equation}
	the following isomorphism holds in $D^b(k_{\sm})$
	\begin{enumerate}[nosep, label=\alph*), start=3]
		\item $G^\D \cong [L^D\overset{\partial}{\to} A^t][-1]$,
	\end{enumerate}
	where $\partial:\sHom_{k_{\sm}}(L,\G_m)\to\sExt^1_{k_{\sm}}(A,\G_m)$ is the connecting map associated to the exact sequence \eqref{algebraic group extension} once applied the derived Cartier dual functor $(-)^\D$.
\end{corollary}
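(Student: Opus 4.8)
The plan is to compute the cohomology sheaves of $L^\D$, $A^\D$ and $G^\D$ directly on the smooth site and to identify them with the classical dual objects, using the vanishing of the higher extension sheaves (Lemma \ref{triviality of ext}) together with the flat-to-smooth comparison (Lemma \ref{fl sm}).

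For a), I would first note that by definition $H^i(L^\D) = \sExt^i_{k_{\sm}}(L,\G_{m,k})$, which vanishes for all $i\geq 1$ by Lemma \ref{triviality of ext}. Hence $L^\D$ is concentrated in degree $0$ and equals $\sHom_{k_{\sm}}(L,\G_{m,k})$. It then remains to identify this sheaf with $L^D=\sHom_{k_{\fppf}}(L,\G_{m,k})$, regarded on $k_{\sm}$ via $\alpha_*$; this follows by taking $H^0$ in the isomorphism of Lemma \ref{fl sm}, since $H^0(R\alpha_* R\sHom_{k_{\fppf}}(L,\G_{m,k})) = \alpha_*\sHom_{k_{\fppf}}(L,\G_{m,k})$. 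For b), the same vanishing gives $\sExt^i_{k_{\sm}}(A,\G_{m,k})=0$ for $i\geq 2$, while $\sHom_{k_{\sm}}(A,\G_{m,k})=0$ because an abelian variety admits no nonconstant morphism to the affine group $\G_m$. Thus $A^\D$ is concentrated in degree $1$, equal to $\sExt^1_{k_{\sm}}(A,\G_{m,k})[-1]$. To identify $\sExt^1_{k_{\sm}}(A,\G_{m,k})$ with $A^t$, I would combine Lemma \ref{fl sm} with the spectral sequence $R^p\alpha_*\sExt^q_{k_{\fppf}}(A,\G_{m,k})\Rightarrow \sExt^{p+q}_{k_{\sm}}(A,\G_{m,k})$: since $\sHom_{k_{\fppf}}(A,\G_{m,k})=0$, the degree-$1$ term reduces to $\alpha_*\sExt^1_{k_{\fppf}}(A,\G_{m,k})\cong A^t$ by the Barsotti--Weil formula, exactly as in Proposition \ref{dual plano}.c). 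This yields $A^\D\cong A^t[-1]$.

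For c), I would apply the contravariant triangulated functor $(-)^\D=R\sHom_{k_{\sm}}(-,\G_{m,k})$ to the distinguished triangle $L\to G\to A\to L[1]$ attached to the extension \eqref{algebraic group extension}, obtaining $A^\D\to G^\D\to L^\D\overset{\partial}{\to} A^\D[1]$. Substituting the isomorphisms of a) and b), this becomes $A^t[-1]\to G^\D\to L^D\overset{\partial}{\to} A^t$, where $\partial\colon L^D\to A^t$ is precisely the connecting morphism $\sHom_{k_{\sm}}(L,\G_{m,k})\to\sExt^1_{k_{\sm}}(A,\G_{m,k})$. Rotating the triangle identifies $G^\D$ with the cone of $\partial[-1]\colon L^D[-1]\to A^t[-1]$, that is, with $[L^D\overset{\partial}{\to}A^t][-1]$. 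As a sanity check, the long exact cohomology sequence of this triangle gives $H^0(G^\D)=\ker\partial$ and $H^1(G^\D)=\coker\partial$ with all other cohomology sheaves vanishing, which pins down the two-term complex up to the cone convention.

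The main obstacle I anticipate is not the homological bookkeeping but the careful passage between the flat and smooth sites: one must ensure that $\alpha_*$ genuinely matches the fppf Cartier dual $L^D$ (respectively the Barsotti--Weil sheaf $A^t$) with the corresponding smooth-site extension sheaf, and that the connecting map $\partial$ produced by the derived Cartier dual on $k_{\sm}$ coincides with the one intended in the statement. The truncation and vanishing issues are handled cleanly by Lemma \ref{triviality of ext}, so the crux is really the compatibility encoded in Lemma \ref{fl sm} together with correctly matching the cone convention for $[L^D\to A^t]$.
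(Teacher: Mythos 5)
Your proposal is correct and follows essentially the same route as the paper: the published proof is a one-line appeal to Proposition \ref{dual plano} (the flat-site computation via Barsotti--Weil), Lemma \ref{triviality of ext} (vanishing of the higher smooth-site extension sheaves), and Lemma \ref{fl sm} (the flat-to-smooth comparison), which is exactly the combination you spell out, and your treatment of c) by dualizing the triangle and identifying $G^\D$ with the cone of $\partial[-1]$ matches the paper's intended argument. The only point you leave implicit is that the derived-category morphism $\partial\colon L^D\to A^t$ between sheaves concentrated in degree zero is an honest morphism of sheaves, which is what lets you write the cone as the two-term complex $[L^D\to A^t][-1]$; this is standard and does not affect correctness.
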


\begin{proof}
	In any case, all groups schemes involved are smooth and locally of finite type over $k$. Then, by vanishing of higher direct images of sheaves represented by smooth group schemes (cf. \cite[p. 114, Chapter III, Theorem 3.9]{EC}), the result follows from Proposition \ref{dual plano}, Lemma \ref{triviality of ext} and Lemma \ref{fl sm}.
\end{proof}


\subsection{Generalized 1-motives} \label{1-motive section}

Throughout this section, $k$ will be a field of characteristic zero and all locally algebraic groups will be commutative.  A \emph{generalized 1-motive} over $k$ is a complex $M=[L^D\to G]$ in $C^b(S(k_{\sm}))$ concentrated in degrees $-1$ and 0, where $L$ is a connected linear group and $G$ is a connected algebraic group. A morphism of generalized 1-motives is just a morphism of complexes. When $L$ is a torus and $G$ is a semiabelian variety, we are in the case of a Deligne 1-motive.

Let $M=[L^D\to G]$ be a generalized 1-motive. Using the decomposition $L=T\times\G_a^n$, for some $n\in\N$, as in \eqref{alg group decomp}, $M$ fits into an exact sequence of complexes
\[ 0 \to (\G_ a^D)^n[1] \to M \to M' \to 0, \]
where $M'$ is the generalized 1-motive $[T^D\to G]$. Hence, the map $M\to M'$ above induces a canonical isomorphism 
\begin{equation} \label{motives withno ga}
	H^i(k,M)\to H^i(k,M')
\end{equation} 
for all $i$. Indeed, $H^i(k,\G_a^D)=0$ for all $i$ (see \cite[Proposition 2.5.1]{Rosengarten}).

Let $G$ be a connected algebraic $k$-group. Recall that the derived Cartier dual $G^\D$ of $G$ is the complex $R\sHom_{k_{\sm}}(G,\G_m)$ (see \eqref{derived Cartier dual}). By Corollary \ref{dual suave}.c), the complex $G^\D[1]$ is represented by a generalized 1-motive. \\

Thus, with a connected algebraic group we can associate a Deligne 1-motive as follows.

\begin{proposition} \label{Deligne 1-motive associated}
	Let $G$ be a connected algebraic group over $k$. There exists a Deligne 1-motive $M$ and an exact triangle
	\[ (\G_a^D)^n \to G^\D \to M[-1] \to (\G_a^D)^n[1] \]
	that induces isomorphism of groups $H^i(k,G^\D) \cong H^{i-1}(k,M)$ for all $i$. 
\end{proposition}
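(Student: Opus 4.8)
The plan is to realize $G^\D$, up to a shift, as a generalized 1-motive and then to split off its additive-dual part, whose cohomology vanishes. First I would apply Corollary \ref{dual suave}.c) to the Chevalley-type decomposition \eqref{algebraic group extension}
\[ 0 \to L \to G \to A \to 0 \]
of the connected algebraic group $G$, with $L$ its connected linear part and $A=G/L$ an abelian variety. This yields an isomorphism $G^\D \cong [L^D \overset{\partial}{\to} A^t][-1]$ in $D^b(k_\sm)$; equivalently, $G^\D[1]$ is represented by the generalized 1-motive $[L^D \to A^t]$, since $A^t$ is an abelian variety and hence a connected algebraic group. Using the characteristic-zero structure decomposition \eqref{alg group decomp}, I would write $L = T\times\G_a^n$ with $T$ a torus, so that $L^D = T^D\times(\G_a^D)^n$.

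Next I would feed the generalized 1-motive $G^\D[1]=[L^D\to A^t]$ into the short exact sequence of complexes exhibited at the start of \S\ref{1-motive section}, obtaining
\[ 0 \to (\G_a^D)^n[1] \to G^\D[1] \to M \to 0, \]
where $M := [T^D \to A^t]$ arises by quotienting $L^D$ by its $(\G_a^D)^n$ summand. The complex $M$ is a Deligne 1-motive: its term in degree $-1$ is $T^D$, the Cartier dual of the torus $T$ (a lattice), and its term in degree $0$ is the abelian variety $A^t$, which is in particular semiabelian. Passing to the associated distinguished triangle and shifting by $[-1]$ turns this short exact sequence into precisely the triangle
\[ (\G_a^D)^n \to G^\D \to M[-1] \to (\G_a^D)^n[1] \]
of the statement.

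Finally I would take the long exact sequence in the cohomology $H^\bullet(k,-)$ attached to this triangle,
\[ \cdots \to H^i(k,(\G_a^D)^n) \to H^i(k,G^\D) \to H^i(k,M[-1]) \to H^{i+1}(k,(\G_a^D)^n) \to \cdots, \]
and use the vanishing $H^j(k,\G_a^D)=0$ for all $j$ (see \eqref{motives withno ga} and \cite[Proposition 2.5.1]{Rosengarten}) to kill the outer terms. This gives $H^i(k,G^\D)\cong H^i(k,M[-1])=H^{i-1}(k,M)$ for all $i$, as required.

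Since Corollary \ref{dual suave} and the construction in \S\ref{1-motive section} already carry out the substantive work, the only delicate point I expect is checking that the quotient $M=[T^D\to A^t]$ is genuinely a Deligne 1-motive, i.e. that the differential $\partial$ annihilates the summand $(\G_a^D)^n$, so that the short exact sequence of complexes above is well defined. This rests on the target $A^t$ being an abelian variety, which admits no nonzero homomorphisms from the Cartier dual of a unipotent group; everything else is a formal manipulation of triangles combined with the cohomological vanishing of $\G_a^D$.
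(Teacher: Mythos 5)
Your argument is exactly the one the paper intends: it leaves the proposition without an explicit proof because the two preceding paragraphs of \S\ref{1-motive section} already supply the ingredients you use, namely Corollary \ref{dual suave}.c) to write $G^\D[1]$ as the generalized 1-motive $[L^D\to A^t]$, the short exact sequence of complexes splitting off $(\G_a^D)^n[1]$ to land on the Deligne 1-motive $[T^D\to A^t]$, and the vanishing $H^i(k,\G_a^D)=0$ to get the cohomology isomorphisms. Your closing remark about the differential killing the $(\G_a^D)^n$ summand is a point the paper glosses over, and your justification via the absence of homomorphisms into the abelian variety $A^t$ is adequate.
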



\subsection{Yoneda pairing}

Let $G$ be a locally algebraic group over $k$. There exists a natural pairing in $D(k_{\sm})$
	\[ G \otimes^{\mathbf{L}} G^\D \to \G_m \]
that induces the following pairing of abelian groups
	\[ H^i(k,G) \times H^{2-i}(k,G^\D) \to H^2(k,\G_m). \]
Observe that this pairing is functorial on $G$. This pairing is crucial in Tate local duality theorems (see for instance Proposition \ref{tate}).


\subsection{Basics on topological groups}

A continuous homomorphism of topological groups $f:A\to B$ is called \emph{strict}\footnote{In \cite{Stroppel}, the author named a strict morphism as \emph{proper}.} if the image of every open subset is open in the image $f(A)$ with the subspace topology in $B$, or equivalently, if the map $\tilde{f}:A/\ker f\to f(A)$ is an isomorphism of topological groups, where $A/\ker f$ has the quotient topology (see \cite[III, \S 2.8, p. 236, Proposition 24]{GTBou}).

An exact sequence of abelian topological groups
\[ \cdots \to A_{i-1} \overset{f_{i-1}}{\to} A_i \overset{f_i}{\to} A_{i+1} \to \cdots \]
is called \emph{strict exact} if every map $f_i$ is continuous and strict. A strict exact sequence of the form
\[ 0 \to A \to B \to C \to 0 \]
is called \emph{a topological extension}.

An abelian topological group is called \emph{profinite} if it is isomorphic to an inverse limit of discrete finite groups. Being profinite is equivalent to being Hausdorff, compact and totally disconected (see \cite[p. 172, Theorem 19.9]{Stroppel}). We can assign a profinite abelian group to every abelian topological group. Let $A$ be an abelian topological group. We define the \emph{profinite completion of $A$} as the inverse limit
\[ A^\wedge := \varprojlim_H A/H \leq \prod_H A/H \]
running over all open subgroups of finite index of $A$, equipped with the subspace topology inherehited from the product $\prod_H A/H$, where $A/H$ has discrete topology. Thus, $A^\wedge$ is a closed subgroup of $\prod_H A/H$. On the other hand, there is a canonical continuous morphism of topological abelian groups
\begin{equation} \label{completion map} 
	c_A : A \to A^{\wedge},
\end{equation}
which has dense image (see \cite[p. 167, Lemma 18.4]{Stroppel}). The profinite completion of $A$ is the universal object for maps from $A$ to profinite abelian groups, that is, for every continuous morphism $f:A\to P$, where $P$ is a profinite abelian group, there exists a unique continuous morphism $\hat{f}:A^\wedge\to P$ such that $f=\hat{f}\circ c_A$ (see \cite[p. 166, Lemma 18.1]{Stroppel}). Every profinite abelian group agrees with its profinite completion. For every continuous morphism of abelian topological groups $f:A\to B$, there is a canonical continuous morphism
\[ f^\wedge:A^\wedge \to B^\wedge, \]
which is also strict (see \cite[III, \S 2.8, p. 237, Remark 1]{GTBou}). Thus, $(-)^\wedge$ defines a functor from the category of abelian topological groups to the category of profinite abelian groups. This functor is right exact in the following sense: for every topological extension of abelian topological groups
\begin{equation*}
	0\to A \to B \to C \to 0,
\end{equation*}
the sequence of abelian topological groups
\begin{equation*}
	A^\wedge \to B^\wedge \to C^\wedge \to 0
\end{equation*}
is strict exact \cite[Proposition C.1.4]{Rosengarten}.

\begin{remark} \label{exactness profinite}
	The profinite completion functor is not left exact in general. For instance, the canonical embedding $i:\Z\to\Q$ of discrete abelian groups induces the zero map $i^\wedge:\hat{\Z}\to 0$ since the profinite completion of the discrete group $\Q$ is trivial. At any rate, a topological embedding $A\to B$ will induce a topological embedding $A^\wedge\to B^\wedge$ in each of the following cases :
	\begin{enumerate}[nosep, label=(\alph*)]
		\item The image of $A\to B$ is closed in $B$ and $B$ is Hausdorff, locally compact, totally disconnected and compactly generated\footnote{An abelian topological group $G$ is compactly generated if there exists a compact subset $S\subseteq G$ such that $\langle S\rangle=G$.} (see \cite[Appendix, Proposition]{HS05}).
		\item $B/A$ is discrete and finitely generated (see \cite[Proposition C.2.1]{Rosengarten}).
	\end{enumerate}
\end{remark}

The following lemma is obvious but will be often used later.

\begin{lemma} \label{profinite}
	Let $\phi:A\to B$ be a continuous homomorphism of abelian topological groups. Assume that $B$ is Hausdorff and $\phi$ is strict with finite image. Then there exists a continuous and strict homomorphism $\hat{\phi}:A^\wedge \to B$ such that $\phi=\hat{\phi}\circ c_A$ and $\im\hat{\phi}=\im\phi$.
\end{lemma}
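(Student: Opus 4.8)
The plan is to reduce everything to the universal property of the profinite completion by observing that $\phi(A)$ is in fact a finite discrete group. First I would note that, since $B$ is Hausdorff and $\phi(A)$ is finite, a finite subset of a Hausdorff space is discrete, so the subspace topology on $\phi(A)\subseteq B$ is the discrete one. Hence $\phi(A)$, being a finite discrete abelian group, is profinite and coincides with its own profinite completion. (In fact the discreteness of $\phi(A)$ already forces $\phi$ to be strict, so the strictness hypothesis on $\phi$ is automatic here, but we will not need to invoke it.)

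Next I would view $\phi$ as a continuous homomorphism $\phi:A\to\phi(A)$ into the profinite group $\phi(A)$. By the universal property of the profinite completion recalled above, there is a unique continuous homomorphism $A^\wedge\to\phi(A)$ whose composite with $c_A$ is $\phi$; composing with the inclusion $\phi(A)\hookrightarrow B$ produces the sought homomorphism $\hat{\phi}:A^\wedge\to B$ with $\phi=\hat{\phi}\circ c_A$. For the image, I would use that $c_A$ has dense image: on the one hand $\im\hat{\phi}\subseteq\phi(A)$ by construction, and on the other hand $\im\hat{\phi}\supseteq\hat{\phi}(c_A(A))=\phi(A)$, so that $\im\hat{\phi}=\phi(A)=\im\phi$.

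Finally I would check strictness of $\hat{\phi}$, which is immediate once the target image is discrete: since $\im\hat{\phi}=\phi(A)$ carries the discrete topology, every subset of it is open, and in particular the image under $\hat{\phi}$ of any open subset of $A^\wedge$ is open in $\im\hat{\phi}$. Equivalently, $\ker\hat{\phi}$ is the preimage of the identity of the finite discrete group $\phi(A)$, hence open in $A^\wedge$, so the induced continuous bijection $A^\wedge/\ker\hat{\phi}\to\phi(A)$ between finite discrete groups is a topological isomorphism.

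There is no substantial obstacle in this argument; the single point that requires the stated hypotheses is the discreteness of $\phi(A)$, which is precisely where the Hausdorffness of $B$ (together with finiteness of the image) is used. Everything else follows formally from the universal property of $(-)^\wedge$ and the density of the canonical map $c_A$, so the statement is genuinely routine once this observation is made.
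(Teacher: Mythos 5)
Your proof is correct and rests on the same key observation as the paper's: a finite image in a Hausdorff group is discrete (equivalently, $\ker\phi$ is closed of finite index, hence open), so everything reduces to factoring through a finite discrete quotient. The only cosmetic difference is that you invoke the universal property of $(-)^\wedge$ with target the profinite group $\phi(A)$, whereas the paper writes the factorization explicitly as $A^\wedge\to A/\ker\phi\to B$; these are the same argument, and your remark that the strictness hypothesis on $\phi$ is automatic is also accurate.
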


\begin{proof}
	Since $\phi$ has finite image and $B$ is Hausdorff, we have that $\ker\phi$ is a closed subgroup of $A$ with finite index. Then, $\ker\phi$ is an open subgroup of $A$ with finite index. Thus we define $\hat{\phi}:A^\wedge \to B$ as the composition
	\[ A^\wedge \overset{\pi_{\ker\phi}}{\longrightarrow} A/\ker\phi \overset{\phi'}{\longrightarrow} B, \]
	where $\pi_{\ker\phi}$ is the canonical projection and $\phi'$ is the composition $A/\ker\phi\overset{\sim}{\to}\im\phi\to B$. Since $B$ is Hausdorff, the inclusion $\im\phi\to B$ is strict. Hence, $\hat{\phi}$ is continuous and strict because it is the composition of a topological embedding and a quotient map (see \cite[III, \S 2.8, p. 237, Remark 2]{GTBou}). Moreover, clearly, the equality $\phi=\hat{\phi}\circ c_A$ holds.
\end{proof}

Let $A$ and $B$ be two abelian topological groups. We denote $\Hom_{\cts}(A,B)$ the group of continuous homomorphisms from $A$ to $B$. The group $\Hom_{\cts}(A,B)$ can be endowed with a natural topology as follows: given a compact subset $K$ of $A$ and an open subset $U$ of $B$, we define $V(K,U)$ as
\[ V(K,U) := \{f:A\to B\text{ continuous}\mid f(K)\subseteq U\}. \]
The family $\{V(K,U)\}_{K,U}$ generates a topology on the group of all continuous maps $C(A,B)$ from $A$ to $B$, which is called the \emph{compact-open topology on} $C(A,B)$. Thus, $\Hom_{\cts}(A,B)$ is endowed with the subspace topology inherehited from $C(A,B)$.

Let $\mathbb{T}$ be the unit circle $\R/\Z$ with the quotient topology induced by $\R$ with its usual metric topology. The torsion subgroup of $\mathbb{T}$ is $\Q/\Z$.

\begin{definition}
	For an abelian topological group $A$, we define its \emph{Pontryagin dual} as the abelian topological group $A^*:=\Hom_{\cts}(A,\mathbb{T})$.
\end{definition}

Observe that $A^*$ is always Hausdorff, even if $A$ is not. Recall that a topological group is \emph{locally compact} if the identity element has a neighborhood basis composed by compact neighborhoods.

\begin{fact} \label{Pontryagin dual}
	Let $A$ be an abelian topological group. Then:
	\begin{enumerate}[nosep, label=\alph*)]
		\item If $A$ is locally compact then so is $A^*$ (see \cite[p. 175, Theorem 20.5]{Stroppel});
		\item If $A$ is compact, then $A^*$ is discrete; and if $A$ is discrete, then $A^*$ is Hausdorff compact (see \cite[p. 175, Theorem 20.6]{Stroppel});
		\item If $A$ is torsion or profinite, then $A^*$ agrees with the group $\Hom_{\cts}(A,\Q/\Z)$ (see \cite[p. 60, Lemma 2.9.2]{Profinite}). 
		\item If $A$ is first countable then so is $A^*$.
	\end{enumerate}
\end{fact}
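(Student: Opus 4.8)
The assertions (a)–(c) are recorded with explicit references, so for those I would simply cite the indicated results; the content to be argued is (d). Recall that $A^{*}=\Hom_{\cts}(A,\mathbb{T})$ carries the compact-open topology and is a topological group, so by translation it suffices to exhibit a \emph{countable} neighborhood basis of the identity $0\in A^{*}$. The plan is to assemble such a basis from two ingredients. First, $\mathbb{T}=\R/\Z$ is metrizable, hence first countable, so fix a countable neighborhood basis $\{U_{m}\}_{m\in\N}$ of $0$ in $\mathbb{T}$. Second, a neighborhood basis of $0$ in the compact-open topology is furnished by the sets
\[ V(K,U)=\{f\in A^{*}\mid f(K)\subseteq U\}, \]
as $K$ ranges over the compact subsets of $A$ and $U$ over neighborhoods of $0$ in $\mathbb{T}$: one checks $V(K_{1}\cup K_{2},U)\subseteq V(K_{1},U)\cap V(K_{2},U)$ and $V(K,U_{1})\cap V(K,U_{2})=V(K,U_{1}\cap U_{2})$, so every finite intersection of such sets again contains one of the same form (enlarge $K$, shrink $U$), and the $V(K,U)$ therefore form a genuine basis, not merely a subbasis, at $0$.

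Combining these two ingredients, suppose we have a countable family $\{K_{n}\}_{n\in\N}$ of compact subsets of $A$ that is \emph{cofinal}, in the sense that every compact $K\subseteq A$ is contained in some $K_{n}$. Then for any basic neighborhood $V(K,U)$ I would choose $U_{m}\subseteq U$ and $K_{n}\supseteq K$, and since $V$ shrinks when the compact set grows and the target neighborhood shrinks, one gets $V(K_{n},U_{m})\subseteq V(K,U_{m})\subseteq V(K,U)$. Hence the countable collection $\{V(K_{n},U_{m})\}_{n,m\in\N}$ is a neighborhood basis of $0$, and first countability of $A^{*}$ follows. Thus the whole statement reduces to producing a countable cofinal family of compacta in $A$, i.e.\ to the hemicompactness of $A$.

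This last reduction is exactly where I expect the main obstacle to lie, and it is a genuine one: first countability is a purely local condition at the points of $A$, whereas cofinality of compacta is a global ($\sigma$-compactness type) requirement, and the former does not by itself deliver the latter. To bridge the gap I would invoke the additional structure present for every group to which the Fact is applied in this paper, namely that the groups arising as subquotients of the cohomology of a proper variety over the $p$-adic field $k$ are locally compact and $\sigma$-compact, being built from finitely many copies of $k$, of $\mathbb{T}$, of finite groups, and of the $k$-points of algebraic groups, all of which are second countable. For a group $A$ that is locally compact, first countable, and $\sigma$-compact, I would write $A=\bigcup_{n}C_{n}$ with $C_{n}$ compact and, using local compactness to thicken each $C_{n}$, arrange $C_{n}\subseteq\operatorname{int}(C_{n+1})$; any compact $K\subseteq A$ is then covered by finitely many of the interiors $\operatorname{int}(C_{n})$ and hence lies in a single $C_{n}$, yielding the required countable cofinal family. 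I would therefore establish (d) with the standing local compactness and $\sigma$-compactness of the ambient groups made explicit, since that is precisely the regime in which the Fact is used in the sequel.
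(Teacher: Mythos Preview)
The paper gives no proof of this Fact: parts (a)--(c) carry references, and part (d) is simply asserted. Your analysis of (d) is sharper than the paper's treatment, because the claim as stated is \emph{false}. Take $A=\bigoplus_{i\in I}\Z/2\Z$ with the discrete topology for an uncountable index set $I$; then $A$ is first countable (trivially), but $A^{*}\cong\prod_{i\in I}\Z/2\Z$ with the product topology, which is not first countable. You correctly diagnosed the issue: first countability of $A$ is local and says nothing about the global cofinality of compacta needed to control the compact-open topology on $A^{*}$.

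Your proposed repair---assume $A$ locally compact and $\sigma$-compact, deduce hemicompactness, and then run the $V(K_{n},U_{m})$ argument---is sound and is exactly the regime in which the paper uses this Fact (cf.\ Propositions~\ref{top fil} and~\ref{top dual fil}, where the relevant groups are second countable). In fact for locally compact abelian groups the situation is governed by the standard duality: $A$ is first countable iff $A^{*}$ is $\sigma$-compact, and $A$ is $\sigma$-compact iff $A^{*}$ is first countable; so both hypotheses together are precisely what is required. Your write-up would be improved by stating (d) with the extra hypotheses explicitly rather than smuggling them in at the end.
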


In the following, we recall the main result related to the Pontryagin dual. Recall that $\LCA$ is the category of Hausdorff and locally compact abelian topological groups with continuous homomorphism as morphisms.

\begin{proposition}[Pontryagin duality] \label{Pontryagin duality}
	The functor $(-)^*\circ(-)^*:\LCA\to\LCA$ is naturally isomorphic to the identity functor $\id_{\LCA}$. Morever, the Pontryagin dual functor $(-)^*:\LCA\to\LCA$ establishes an anti-equivalence between the category of discrete abelian torsion groups and the category of profinite abelian groups.
\end{proposition}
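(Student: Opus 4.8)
The plan is to treat the two assertions separately: the biduality statement, namely that the evaluation map $\mathrm{ev}_A\colon A\to A^{**}$ is a topological isomorphism for every $A\in\LCA$, is the substantial analytic input, whereas the anti-equivalence between discrete torsion groups and profinite groups follows formally from biduality together with the interchange of limits and colimits under $(-)^*$. First I would construct the natural transformation $\mathrm{ev}\colon\id_{\LCA}\to(-)^*\circ(-)^*$ by the formula $\mathrm{ev}_A(a)(\chi)=\chi(a)$ and check that each $\mathrm{ev}_A$ is a continuous homomorphism; the entire content then lies in showing it is a topological isomorphism.

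For biduality I would proceed by reduction to elementary groups. The key external input is the structure theorem for locally compact abelian groups: every $A\in\LCA$ is topologically isomorphic to $\R^n\times H$, where $H$ contains a compact open subgroup $K$. By Fact \ref{Pontryagin dual} the functor $(-)^*$ interchanges compact and discrete groups, so I would verify $\mathrm{ev}$ on the generating cases: for $A=\R$, $\Z$, $\mathbb{T}$, and finite cyclic groups the map $\mathrm{ev}_A$ is an isomorphism by direct computation, using $\R^*\cong\R$, $\Z^*\cong\mathbb{T}$ and $\mathbb{T}^*\cong\Z$. The genuinely analytic heart is the compact/discrete case: for compact $K$ one must know that its characters separate points and that $K^*$ is discrete, which rests on the existence of Haar measure and Fourier theory (the Peter–Weyl theorem specialised to the abelian setting). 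Once $\mathrm{ev}$ is known on elementary groups, I would propagate it along the strict short exact sequences $0\to K\to H\to H/K\to 0$, with $K$ compact open and $H/K$ discrete, using that $(-)^*$ is exact on topological extensions and the five lemma applied to the ladder of evaluation maps, and then dispose of the factor $\R^n$ separately.

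For the second assertion I would use that $(-)^*$ converts the relevant limits into colimits and conversely. If $A$ is discrete and torsion, then $A$ is the filtered union of its finite subgroups $A_i$, so $A^*\cong\varprojlim_i A_i^*$ is an inverse limit of finite groups, hence profinite; moreover $A^*=\Hom_{\cts}(A,\mathbb{T})$ coincides with $\Hom(A,\Q/\Z)$ by Fact \ref{Pontryagin dual}.c). Conversely, if $P=\varprojlim_i P_i$ is profinite with the $P_i$ finite, then every continuous character of $P$ has finite image (its image is a totally disconnected compact subgroup of $\mathbb{T}$, hence finite), so such characters factor through the $P_i$ and $P^*\cong\varinjlim_i P_i^*$ is a filtered colimit of finite groups, hence discrete and torsion. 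Thus $(-)^*$ restricts to functors between the two subcategories, and the biduality of the first assertion shows these restrictions are mutually quasi-inverse, yielding the asserted anti-equivalence.

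The main obstacle is the compact/discrete core of biduality: establishing that a compact abelian group has enough characters to separate its points and that the evaluation map is a homeomorphism onto its image. This is precisely where genuine harmonic analysis—Haar measure, the Fourier transform, and Pontryagin's inversion theorem—enters, and it cannot be bypassed by purely categorical manipulation; by contrast, the structure-theoretic reduction and the propagation along strict exact sequences are comparatively formal.
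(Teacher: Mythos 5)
The paper does not prove this statement at all: it is quoted as a classical black box, with the biduality assertion cited to Stroppel (Theorem 22.6, loc.\ cit.) and the anti-equivalence between discrete torsion and profinite groups cited to Ribes--Zalesskii (Theorem 2.9.6, loc.\ cit.). Your proposal instead reconstructs the standard harmonic-analytic proof, and the outline is essentially correct: evaluation map, reduction via the structure theorem $A\cong\R^n\times H$ with $H$ containing a compact open subgroup $K$, direct verification on $\R$, $\Z$, $\mathbb{T}$ and finite groups, Peter--Weyl for the compact/discrete core, propagation along $0\to K\to H\to H/K\to 0$, and the limit/colimit argument for the second assertion. Two points deserve care. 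First, invoking ``$(-)^*$ is exact on topological extensions'' (Proposition \ref{pontryagin exactness}) risks circularity, since in the literature (and in this paper's own ordering) that exactness is derived \emph{after} duality; for your specific sequences the issue dissolves because $K$ is \emph{open} in $H$, so surjectivity of $H^*\to K^*$ follows from divisibility (hence injectivity) of $\mathbb{T}$ as an abstract group together with automatic continuity of any extension of a character that is continuous on an open subgroup --- but you should say this rather than cite the general exactness statement. Second, in the profinite case the image of a continuous character is not finite merely because ``continuous images of totally disconnected spaces are totally disconnected'' (false in general); rather, $\chi(P)\cong P/\ker\chi$ is a quotient of a profinite group by a closed subgroup, hence profinite, hence cannot be the connected group $\mathbb{T}$, and the proper closed subgroups of $\mathbb{T}$ are finite. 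With these repairs your argument is a complete and correct, if necessarily long, replacement for the paper's citations; what the paper's approach buys is brevity, while yours makes explicit exactly where Haar measure and Fourier inversion enter.
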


\begin{proof}
	For the first assertion see \cite[p. 193, Theorem 22.6]{Stroppel} and for the last assertion see \cite[p. 62, Theorem 2.9.6]{Profinite}.
\end{proof}

Another very important result is that $(-)^*$ preserves exactness in $\mathbf{LCA}$.

\begin{proposition} \label{pontryagin exactness}
	Let $\s{E}:0 \to A \to B \to C \to 0$ be a topological extension in $\mathbf{LCA}$. Then the dual sequence
	\[ \s{E}^*:0\to C^* \to B^* \to A^* \to 0 \]
	is a topological extension in $\mathbf{LCA}$.
\end{proposition}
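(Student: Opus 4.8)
The plan is to deduce the statement from the annihilator duality for closed subgroups of locally compact abelian groups, which is the natural topological refinement of the mere algebraic exactness. Write $i\colon A\to B$ and $p\colon B\to C$ for the two maps of $\s{E}$. Since $\s{E}$ is a topological extension, $i$ is a closed topological embedding with $\im i=\ker p$, and $p$ is a strict surjection, so that $p$ identifies $C$ with the topological quotient $B/A$. By Fact \ref{Pontryagin dual}.a) the groups $A^*,B^*,C^*$ again lie in $\LCA$, so it suffices to prove that
\[ \s{E}^*\colon 0\to C^*\overset{p^*}{\to} B^*\overset{i^*}{\to} A^*\to 0 \]
is strict exact. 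Throughout I set $A^{\perp}:=\ker(i^*)=\{\chi\in B^*\mid \chi\circ i=0\}$, the annihilator of $A$ in $B^*$.

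First I would check algebraic exactness. The map $p^*$ is injective because $p$ is surjective, so $\chi\circ p=0$ forces $\chi=0$. The inclusion $\im p^*\subseteq A^{\perp}$ is immediate from $p\circ i=0$. Conversely, a character $\psi\in A^{\perp}$ vanishes on $A=\ker p$, hence factors through the quotient $B/A$; since $p$ is a strict surjection, hence a quotient map, the induced character on $C\cong B/A$ is automatically continuous, giving $\psi\in\im p^*$. Thus $\ker i^*=\im p^*$. Finally, surjectivity of $i^*$ is exactly the assertion that every continuous character of the closed subgroup $A$ extends to a continuous character of $B$; this is the character-extension theorem for $\LCA$ groups, which rests on the injectivity of the dualizing group $\mathbb{T}$.

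It then remains to upgrade this to strict exactness, that is, to control the topologies. Here I would invoke the two standard annihilator isomorphisms attached to the closed subgroup $A\leq B$ (cf. \cite{Stroppel} and Proposition \ref{Pontryagin duality}): the pullback $p^*$ induces a topological isomorphism $C^*\cong(B/A)^*\overset{\sim}{\to}A^{\perp}$ onto the closed subgroup $A^{\perp}$ of $B^*$, so that $p^*$ is a closed topological embedding; and the restriction $i^*$ induces a topological isomorphism $B^*/A^{\perp}\overset{\sim}{\to}A^*$, so that $i^*$ is a strict surjection with kernel $A^{\perp}=\im p^*$. Combining the two, $\s{E}^*$ is precisely the topological extension $0\to A^{\perp}\to B^*\to B^*/A^{\perp}\to 0$, which is the claim.

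I expect the genuine content, and the main obstacle, to lie entirely in this last paragraph rather than in the formal exactness: namely the surjectivity of $i^*$ (extension of characters off a closed subgroup) together with the openness statements that turn the canonical algebraic isomorphisms $(B/A)^*\cong A^{\perp}$ and $A^*\cong B^*/A^{\perp}$ into homeomorphisms. These are the substantive inputs from the structure theory of locally compact abelian groups. The continuity of the comparison maps is formal, since the compact-open topologies are compatible with the quotient map $p$ and the inclusion $i$, but the fact that they are strict—equivalently, that $i^*$ is open—is where local compactness is essential and where one ultimately appeals to Pontryagin duality (Proposition \ref{Pontryagin duality}).
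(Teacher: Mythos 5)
Your argument is correct: the exactness plus the two annihilator isomorphisms $C^*\cong A^{\perp}$ and $A^*\cong B^*/A^{\perp}$, together with the character-extension theorem for the surjectivity of $i^*$, is exactly the standard proof of this duality statement. The paper itself gives no argument but simply cites \cite[p.~195, Corollary 23.4]{Stroppel}, and what you have written is precisely the proof underlying that reference, so there is nothing to object to.
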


\begin{proof}
	See \cite[p. 195, Corollary 23.4]{Stroppel}.
\end{proof}

Let $A$ and $B$ be two abelian topological groups. A pairing $A\times B\to\Q/\Z$ is a bi-additive map. We say that a pairing $A\times B\to\Q/\Z$ is continuous if it is a continuous map between $A\times B$ with the product topology and $\Q/\Z$ with the discrete topology. Further, a continuous pairing $A\times B\to\Q/\Z$ is \emph{perfect} if the induced maps of abelian topological groups $A\to\Hom_{\cts}(B,\Q/\Z)$ and $B\to\Hom_{\cts}(A,\Q/\Z)$ are topological isomorphisms. The following result gives a way to prove when a pairing $A\times B\to\Q/\Z$ is continuous.

\begin{proposition} \label{pairing continuous}
	Let $\pint{-}{-}:A\times B\to\Q/\Z$ be a pairing between locally compact abelian topological groups. Assume that
	\begin{enumerate}[nosep, label=\roman*)]
		\item $\pint{-}{-}$ is continuous at $(0_A,0_B)$;
		\item $\pint{-}{b}:A\to\Q/\Z$ is continuous at $0_A$ for every $b\in B$; and
		\item the map $B\to\Hom_{\cts}(A,\Q/\Z):b\mapsto\pint{-}{b}$ is continuous at $0_B$. 
	\end{enumerate}
	Then $\pint{-}{-}$ is continuous.
\end{proposition}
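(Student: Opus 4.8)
The plan is to establish continuity at an arbitrary point $(a_0,b_0)\in A\times B$, exploiting bi-additivity to reduce matters to the three hypotheses and using crucially that the target $\Q/\Z$ carries the discrete topology, so that a map into $\Q/\Z$ is continuous at a point precisely when it is locally constant there. Writing $a=a_0+s$ and $b=b_0+t$, bi-additivity gives
\[ \pint{a}{b}-\pint{a_0}{b_0}=\pint{s}{t}+\pint{s}{b_0}+\pint{a_0}{t}. \]
Thus it suffices to find a neighbourhood $U$ of $0_A$ and a neighbourhood $V$ of $0_B$ such that each of the three terms on the right vanishes whenever $s\in U$ and $t\in V$; the pairing will then be constantly equal to $\pint{a_0}{b_0}$ on the neighbourhood $(a_0+U)\times(b_0+V)$ of $(a_0,b_0)$, proving continuity there.

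The first two terms are immediate. Since $\pint{0_A}{0_B}=0$ and $\{0\}$ is open in $\Q/\Z$, hypothesis i) yields neighbourhoods $U_1\ni 0_A$ and $V_1\ni 0_B$ with $\pint{s}{t}=0$ for all $(s,t)\in U_1\times V_1$, disposing of $\pint{s}{t}$. For $\pint{s}{b_0}$ I would invoke hypothesis ii) with $b=b_0$: the homomorphism $\pint{-}{b_0}$ is continuous at $0_A$ and vanishes there, so there is a neighbourhood $U_2\ni 0_A$ on which $\pint{s}{b_0}=0$.

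The term $\pint{a_0}{t}$ is the one that forces us to unwind the compact-open topology on $\Hom_{\cts}(A,\Q/\Z)$, and this is the crux. First note that hypothesis ii), giving continuity at $0_A$ of each homomorphism $\pint{-}{b}$, automatically upgrades to continuity on all of $A$ by translation, so that $\pint{-}{b}$ really lies in $\Hom_{\cts}(A,\Q/\Z)$ and the assignment appearing in iii) is well defined, sending $0_B$ to the zero homomorphism. Because $\Q/\Z$ is discrete, the sets $V(K,\{0\})=\{f:f(K)=0\}$ with $K\subseteq A$ compact form a neighbourhood basis of the zero map in the compact-open topology. Applying the continuity in iii) to the compact singleton $K=\{a_0\}$ then produces a neighbourhood $V_3\ni 0_B$ with $\pint{a_0}{t}=0$ for every $t\in V_3$.

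Taking $U=U_1\cap U_2$ and $V=V_1\cap V_3$ makes all three terms vanish simultaneously for $(s,t)\in U\times V$, which finishes the proof. I expect the only genuine obstacle to be the correct interpretation of hypothesis iii): one must read continuity into the compact-open function space, evaluated against a single compact point with discrete target, as exactly the statement that $\pint{a_0}{-}$ is locally zero near $0_B$. Once this translation is made the three hypotheses align perfectly with the three summands, and, somewhat notably, no further use of the local compactness of $A$ or $B$ is required beyond the ambient hypotheses.
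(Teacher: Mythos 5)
Your proof is correct. Note that the paper itself does not prove this proposition at all: it simply cites \cite[Proposition 1.12]{GAlocal}, so there is no internal argument to compare against. Your direct argument is the natural one and it is complete: the bi-additive expansion $\pint{a_0+s}{b_0+t}-\pint{a_0}{b_0}=\pint{s}{t}+\pint{s}{b_0}+\pint{a_0}{t}$ matches the three hypotheses term by term, the discreteness of $\Q/\Z$ correctly converts each continuity hypothesis into local vanishing, and you rightly handle the two small points that need care: that hypothesis ii) upgrades to global continuity of each $\pint{-}{b}$ by translation (so the map in iii) lands in $\Hom_{\cts}(A,\Q/\Z)$), and that $V(\{a_0\},\{0\})$ is an open neighbourhood of the zero homomorphism in the compact-open topology, which is exactly what extracts the neighbourhood $V_3$ controlling $\pint{a_0}{t}$. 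Your closing observation that local compactness of $A$ and $B$ is never used is also accurate; the statement holds for arbitrary topological abelian groups, and the hypothesis is present in the paper only because that is the ambient setting in which the proposition is applied.
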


\begin{proof}
	See \cite[Proposition 1.12]{GAlocal}.
\end{proof}

\begin{remark} \label{pairing discrete}
	If $A$ in Proposition \ref{pairing continuous} is discrete, we only have to check $iii)$ in order to establish the continuity of $\pint{-}{-}$. Indeed, $ii)$ holds trivially and $i)$ holds since $\{0_A\}$ is an open neighborhood of $0_A$ and for every open neighborhood $V$ of $0_B$ we have that $\pint{0_A}{V}=0$. Hence $\pint{-}{-}$ is continuous at $(0_A,0_B)$.
\end{remark}

Recall that, for a topological group $G$, its \emph{separation} or \emph{Hausdorff quotient} $G_\Haus$ is defined as the quotient group $G/\overline{\{0_G\}}$. Note that, for every topological group $A$, the topological group $\Hom_{\cts}(A,\Q/\Z)$ is Hausdorff. Then, a continuous pairing of locally compact abelian topological groups $\pint{-}{-}:A\times B\to\Q/\Z$ induces a continuous pairing $\pint{-}{-}_{\Haus}:A_{\Haus}\times B_{\Haus}\to\Q/\Z$.


\subsection{Topology on cohomology groups over a $p$-adic field} \label{topologization section}

\paragraph{Locally algebraic groups} 

Since all group schemes considered are smooth, the cohomology groups $H^n(k,G)$ can be indistinctly regarded as groups of Galois cohomology, étale cohomology, smooth cohomology or flat cohomology. For this reason, we do not make distinction in the notation. \\

Let $G$ be a locally algebraic group over a $p$-field $k$. There is an immersion $G(k)\to\P_k^n(k)$. Now, $\P_k^n(k)$ is naturally endowed with a topology comming from $k$. Thus, $G(k)$ is equipped with the subpace topology from $\P_k^n(k)$. Moreover, we will assume that all cohomology groups $H^n(k,G)$, for $n\geq 1$, has discrete topology. \\

The following result compiles the topological properties of the cohomology groups of locally algebraic groups.

\begin{proposition} \label{top alg grp}
	Let $G$ be a locally algebraic group over a $p$-adic field $k$. Then:
	\begin{enumerate}[nosep, label=\alph*)]
		\item The group $G(k)$ is Hausdorff, locally compact and totally disconnected. Moreover, $G(k)$ is second countable whenever $G$ is algebraic; and if $G$ is also proper, then $G(k)$ is compact and therefore profinite. When $G$ is an étale group scheme, then $G(k)$ is discrete; 
		\item The group $H^n(k,G)$ is (discrete) torsion for all $n\geq 1$;
		\item The group $H^n(k,G)$ is trivial for $n\geq 3$.
	\end{enumerate}
\end{proposition}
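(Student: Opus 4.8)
The plan is to establish each of the three parts of Proposition \ref{top alg grp} separately, reducing everything to the case of a $p$-adic field $k$ and exploiting the reduction theory and exact sequences available for algebraic groups.

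For part a), the plan is to topologize $G(k)$ via the $p$-adic topology on $k$. First I would treat the algebraic case: when $G$ is algebraic (finite type), $G(k)$ inherits a topology from the $p$-adic topology on $k$ through any closed embedding into affine space locally, making it a $p$-adic analytic manifold; this gives the Hausdorff, locally compact, totally disconnected, and second countable properties, since $k$ itself has all of these. For properness, I would invoke the valuative criterion (or the fact that $G(k) = G(\O_k)$ for proper $G$ by the valuative criterion of properness) to conclude $G(k)$ is compact, and being compact and totally disconnected makes it profinite by the characterization in the preliminaries. The locally algebraic (not finite type) case follows by writing $G$ as an increasing union of its open algebraic subgroups and taking the colimit topology; for the étale locally constant finitely generated case, such a group becomes constant after a finite separable extension, so $G(k)$ is a subgroup of a finitely generated group on which $\Gamma$ acts, hence discrete.

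For part b), the key input is the structure of $H^n(k,G)$ as Galois cohomology. First I would reduce to the connected case and use the decomposition \eqref{alg group decomp}, writing a connected $G$ as an extension of an abelian variety $A$ by $T \times \G_a^n$; the long exact cohomology sequence then reduces the claim to the three cases $T$, $\G_a$, and $A$. For each of these, $H^n(k,-)$ with $n \geq 1$ is a Galois cohomology group $H^n(\Gamma, G(k^s))$, and **the discreteness follows from the fact that Galois cohomology of a discrete $\Gamma$-module is naturally discrete**, being a colimit over open subgroups of $\Gamma$. For torsion, I would use that $H^n(\Gamma, M)$ for $n \geq 1$ is always torsion when $\Gamma$ is profinite (every cohomology class is killed by restriction-corestriction through an open subgroup, or by the standard argument that higher cohomology of a profinite group is torsion); the $\G_a$ factor contributes nothing since $H^n(k,\G_a) = 0$ for $n \geq 1$ in characteristic zero by the vanishing of higher Galois cohomology of a $\Q$-vector space.

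For part c), the plan is to invoke the cohomological dimension of a $p$-adic field. A $p$-adic field $k$ has cohomological dimension $2$ (strict cohomological dimension $2$), so for any torsion Galois module $H^n(k,-) = 0$ for $n \geq 3$. Combined with part b), which shows these groups are torsion, this gives the vanishing directly for $T$ and $A$; for $\G_a$ the vanishing is immediate as above. **The main obstacle I anticipate is the careful topologization in part a)**—specifically, ensuring that the colimit topology in the non-finite-type locally algebraic case genuinely yields Hausdorff and locally compact (rather than merely locally closed) behavior, and verifying that the topology is intrinsic and functorial. The cohomological statements in b) and c) are essentially standard consequences of local class field theory and the cohomological dimension of local fields, so I expect those to reduce cleanly to citing Serre's \emph{Galois Cohomology}; the genuinely delicate work is confined to making the point-group topologies precise and compatible across the structural exact sequences.
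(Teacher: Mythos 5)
Your overall route matches the paper's, which disposes of all three parts by citation: part a) is \cite[Propositions 2.17 and 3.7]{C15} and \cite[Proposition 5.4]{Conrad} (with compactness for proper $G$ from \cite[Lemma 2.12.1.(b)]{C15} and second countability from the embedding of $G(k)$ into $\P^n_k(k)$), part b) is \cite[Proposition 3.5]{C15} together with the restriction--corestriction argument, and part c) is exactly the strict cohomological dimension $2$ of $k$. Your reconstructions of b) and c) are essentially sound, modulo two small points: in b), the discreteness of $H^n(k,G)$ for $n\geq 1$ is a theorem about the topology of \cite{C15} (defined via cochains valued in the topological group $G(k^s)$), not something you get for free by declaring Galois cohomology discrete --- if you simply impose the discrete topology by fiat you must still check it is the same topology used in the strict exactness statements later (Proposition \ref{prop les}); and in c), vanishing for the non-torsion modules $T(k^s)$ and $A(k^s)$ requires \emph{strict} cohomological dimension $2$, so your phrase ``for any torsion Galois module'' is not the hypothesis you actually use (being a torsion module and having torsion cohomology are different conditions).

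The one genuine error is in your treatment of the non-finite-type case in part a): a commutative locally algebraic group is in general \emph{not} an increasing union of open algebraic subgroups. An algebraic (finite type) subgroup scheme has finite $\pi_0$, so your claim would force $\pi_0(G)$ to be a union of finite subgroups; this fails already for the constant group $\Z_k$, and more to the point it fails for $G=\Pic_{X/k}$ whenever $\NS_{X/k}$ has positive rank --- precisely the groups this proposition is applied to. The correct construction (the one underlying \cite{C15} and \cite{Conrad}) topologizes $G(k)$ by declaring $G^0(k)$, with its usual $p$-adic topology, to be an open subgroup; equivalently one writes $G$ as the disjoint union of its connected components, each a finite-type scheme, and gives $G(k)$ the disjoint-union topology. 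With that replacement the Hausdorff, local compactness and total disconnectedness claims go through. A second, more minor, gap: your compactness argument via ``$G(k)=G(\O_k)$ by the valuative criterion'' presupposes a proper model of $G$ over $\O_k$; it is cleaner to use that a proper algebraic group over a field of characteristic zero is an abelian variety, hence projective, so $G(k)$ is closed in the compact space $\P^n(k)$.
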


\begin{proof}
	For $a)$, the first three properties follow respectively from \cite[Propositions 2.17 and 3.7]{C15} and \cite[Proposition 5.4]{Conrad}\footnote{This reference contains the three properties of this assertion. However we opt for refering to \cite{C15} because in this article the continuity of the maps in the long exact sequence of cohomology groups associated to short exact sequences of commutative locally algebraic groups is treated.}. The second countability when $G$ is algebraic follows from the fact that $G(k)$ is a subspace of the second countable space $\P^n_k(k)$ for some $n\in\N$ \cite[p. 24, Proposition 2.A.12.(2)]{Cornulier}. The assertion about compactness in the case of $G$ being proper follows from \cite[Lemma 2.12.1.(b)]{C15}. For the assertion concerning to étale group schemes, see \cite[Lemma 5.3]{Conrad}. For $b)$ see \cite[p. 73, Corollary 4.23]{GC}. The point $c)$ follows from the fact that $k$ has strict cohomological dimension 2 \cite[p. 134, Theorem 10.6]{GC}.
\end{proof}

\begin{remark}
	Recall that for a connected linear $k$-group $L$ over a $p$-adic field $k$, its group of $k$-points $L(k)$ has the property that all its finite-index subgroups are open. Indeed, $L(k)=T(k)\times\G_a^n(k)$, for some torus $T$ and $n\geq 0$, and both $T(k)$ and $\G_a^n(k)$ have the property of finite-index subgroups being open. Therefore, for any connected algebraic $k$-group $G$, its group of $k$-groups $G(k)$ has the property of all its finite-index subgroups being open (cf. Example 3.11 in \cite{topart}).
\end{remark}

Furthermore, the assignment of the topology in Proposition \ref{top alg grp} behaves well with respect to long exact sequences in cohomology associated to short exact sequences of locally algebraic groups.

\begin{proposition} \label{prop les}
	Let $0 \to H \to G \to Q \to 0$ be an exact sequence of locally algebraic groups over a $p$-adic $k$. Then all the maps in the long exact sequence of cohomology groups 
	\begin{equation} \label{les}
		0 \to H(k) \to G(k) \to Q(k) \to H^1(k,H) \to H^1(k,G) \to H^1(k,Q) \to \cdots
	\end{equation}
	are continuous. Moreover $H(k) \to G(k)$ is closed and $G(k) \to Q(k)$ is open. Hence, the sequence \eqref{les} is strict exact.  
\end{proposition}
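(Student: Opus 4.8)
The plan is to reduce the whole statement to two geometric facts about $k$-points of smooth $k$-schemes in the $p$-adic topology, together with the observation that the tail of the sequence consists of discrete groups. I would first dispose of continuity: the maps $H(k) \to G(k)$ and $G(k) \to Q(k)$ are continuous because the $p$-adic topology on $k$-points is functorial in the $k$-scheme; every map from $H^1$ onward is a homomorphism between discrete groups by Proposition \ref{top alg grp}.b), hence automatically continuous; and the continuity of the connecting map $\delta : Q(k) \to H^1(k,H)$ will follow once we know its kernel is open (see below). It then remains to check strictness of each map.

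For $H(k) \to G(k)$: since $H = \ker(G \to Q)$ is a closed subgroup scheme, $H \to G$ is a closed immersion of finite-type $k$-schemes, which induces a homeomorphism onto a closed subset on $k$-points (the image is the zero locus of the defining equations, which are continuous functions on $G(k)$). Thus $H(k) \to G(k)$ is a closed topological embedding, in particular strict.

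For $G(k) \to Q(k)$: because $\char k = 0$ the group $H$ is smooth, so $G \to Q$ is a smooth surjective morphism (a torsor under $H$). By the implicit function theorem for $p$-adic analytic manifolds, the induced map $G(k) \to Q(k)$ is open; hence its image is an open subgroup of $Q(k)$ and the map is open onto its image, i.e. strict. By exactness of cohomology this image is exactly $\ker\delta$, so $\ker\delta$ is open; since $H^1(k,H)$ is discrete, this gives both the continuity and the strictness of $\delta$ (it factors as the open quotient $Q(k) \to Q(k)/\ker\delta$ onto a discrete group, followed by an injection into the discrete group $H^1(k,H)$). Every remaining map in \eqref{les} is a homomorphism of discrete groups, hence strict. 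Combining these, \eqref{les} is strict exact.

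The step that needs the most care is the openness of $G(k) \to Q(k)$: it is the only point where genuine $p$-adic analytic input (the submersion/implicit function theorem) is used, and it is what makes smoothness of $H$ essential — which is why the characteristic-zero hypothesis is convenient here. Once openness is secured, the images sit as open subgroups, and all the continuity and strictness claims, as well as the discreteness-driven collapse of the tail, follow formally.
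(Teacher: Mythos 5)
Your proposal is correct and follows the same overall strategy as the paper: continuity of every arrow, closedness of $H(k)\to G(k)$, openness of $G(k)\to Q(k)$, and discreteness of the higher cohomology groups forcing strictness of the remaining maps. The only difference is that the paper outsources the two geometric inputs to \v{C}esnavi\v{c}ius (\cite[Propositions 4.2 and 4.3.(a)]{C15}), whereas you prove them directly via the closed-immersion description of $H\hookrightarrow G$ and the $p$-adic implicit function theorem for the smooth surjection $G\to Q$; the paper also deduces closedness of the image of $H(k)\to G(k)$ more cheaply, as the kernel of a continuous map into the Hausdorff group $Q(k)$.
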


\begin{proof}
	The continuity of all the maps in \eqref{les} follows from \cite[Proposition 4.2]{C15}. Then $H(k) \to G(k)$ is closed since $Q(k)$ is Hausdorff. The fact that $G(k) \to Q(k)$ is open follows from \cite[Proposition 4.3.(a)]{C15}. Hence, since all higher cohomology groups involved have the discrete topology, we conclude that all continuous maps in \eqref{les} are strict.
\end{proof}

Let $A$ be an abelian variety and $N$ be an algebraic group of multiplicative type. Then, by Proposition \ref{top alg grp}, $A(k)$ is profinite (see also \cite[Chapter I, p. 41, Lemma 3.3]{ADT}) since $A$ is proper over $k$. Moreover, $N^D(k)$ is discrete. However, $N(k)$ is neither discrete nor profinite, even when $N$ is a torus. Nevertheless, every subgroup of $N(k)$ of finite index is open when $k$ is a $p$-adic field (see \cite[p. 26]{ADT}). This plays an important role in duality results since, in order to obtain the perfection of certain pairings, it is necessary to take profinite completion. 

Recall that an abelian torsion group is \emph{of cofinite type} if its $n$-torsion subgroups are finite for all $n\in\N$.

\begin{proposition}
	Let $A$ be an abelian variety and $N$ an algebraic group of multiplicative type over a $p$-adic field $k$. Then,
	\begin{enumerate}[nosep, label=\alph*)]
		\item $H^1(k,A)$ is torsion of cofinite type and $H^2(k,A)$ is trivial; and
		\item $H^1(k,N)$ and $H^1(k,N^D)$ are finite.
	\end{enumerate}
\end{proposition}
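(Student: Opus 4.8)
The plan is to derive both parts from Tate's local duality theorems, reducing everything else to the finiteness of Galois cohomology of finite modules and to $\mathrm{cd}(k)=2$.

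For part a), I would start from the dual abelian variety $A^t$, which is again proper, so by Proposition \ref{top alg grp}.a) the group $A^t(k)=H^0(k,A^t)$ is profinite. Tate's local duality for abelian varieties over a $p$-adic field (see \cite[Chapter I, Theorem 3.2 and Corollary 3.4]{ADT}) supplies a perfect pairing $H^1(k,A)\times A^t(k)\to\Q/\Z$, which identifies $H^1(k,A)$ with the Pontryagin dual of the profinite group $A^t(k)$; by Fact \ref{Pontryagin dual} this dual is discrete and torsion, consistently with Proposition \ref{top alg grp}.b). To upgrade torsion to cofinite type, I would run the Kummer sequence $0\to A[n]\to A\xrightarrow{n}A\to 0$: the associated long exact sequence exhibits $H^1(k,A)[n]$ as a quotient of $H^1(k,A[n])$, and the latter is finite because $A[n]$ is a finite \'etale group scheme over a $p$-adic field and Galois cohomology with finite coefficients over a local field is finite. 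Hence every $n$-torsion subgroup is finite. The vanishing $H^2(k,A)=0$ (in fact $H^r(k,A)=0$ for all $r\geq 2$) is again part of Tate's theorem \cite[Chapter I, Corollary 3.4]{ADT}; alternatively one checks via the Kummer sequence that $H^2(k,A)$ is divisible (using $H^3(k,A[n])=0$, as $\mathrm{cd}(k)=2$) and of cofinite type, and then kills it using the duality $H^2(k,A)\cong H^{-1}(k,A^t)^{*}=0$.

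For part b), I would handle $N^D$ by d\'evissage and $N$ by its structure sequence. Writing $M$ for the finitely generated Galois module attached to $N^D$, so that $H^1(k,N^D)=H^1(k,M)$, I would use $0\to M_{\tors}\to M\to M/M_{\tors}\to 0$ with $M_{\tors}$ finite and $M/M_{\tors}\cong\Z^{r}$: here $H^1(k,M_{\tors})$ is finite (finite coefficients over a local field), while $H^1(k,\Z^{r})$ is finite because, choosing a finite Galois extension $K/k$ trivializing the action, inflation–restriction together with $H^1(G_K,\Z^{r})=\Hom_{\cts}(G_K,\Z^{r})=0$ reduces it to the finite group $H^1(\Gal(K/k),\Z^{r})$. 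The long exact sequence then gives $H^1(k,N^D)$ finite. For $N$ itself I would invoke the structure sequence $0\to T\to N\to F\to 0$ with $T$ a torus and $F$ finite of multiplicative type: $H^1(k,F)$ is finite, and $H^1(k,T)$ is finite by Tate–Nakayama local duality for tori, which identifies it with the Pontryagin dual of $H^1(k,T^{D})$, a lattice-type group already handled (see \cite[Chapter I, \S 2]{ADT}). The long exact sequence then squeezes $H^1(k,N)$ between two finite groups.

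The main obstacle is not any elementary manipulation but the need to import the correct duality inputs: the finiteness of $H^1(k,T)$ for a torus and the vanishing $H^2(k,A)=0$ are precisely the two points that genuinely require Tate's, respectively Tate–Nakayama's, local duality theorems and cannot be obtained by d\'evissage from the finite-coefficient case alone (note that $T(K)$ is not finitely generated, so the naive finite-group cohomology argument fails). Everything else — the cofinite-type statement and the reduction of $H^1(k,N^D)$ to the finite and lattice cases — follows formally from finiteness of Galois cohomology of finite modules over a $p$-adic field and from $\mathrm{cd}(k)=2$.
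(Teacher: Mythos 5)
Your argument is correct and rests on exactly the same inputs the paper uses: the paper's proof consists entirely of citations to Milne's \emph{Arithmetic Duality Theorems} (Chapter I, Corollary 3.4 for part a), and Theorem 2.1 and Corollary 2.4 for part b)), which are precisely the local duality statements you invoke. The only difference is that you spell out the standard d\'evissage (Kummer sequence for cofinite type, the $M_{\tors}$/lattice splitting, inflation--restriction) that those references already contain, so there is no substantive divergence from the paper's route.
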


\begin{proof}
	For $a)$ see \cite[Chapter I, p. 43, Corollary 3.4]{ADT}. For $b)$ see \cite[Chapter I, p. 28, Corollary 2.4]{ADT} and \cite[Chapter I, p. 26, Theorem 2.1]{ADT} respectively.
\end{proof}


\paragraph{Generalized 1-motives}

Now we treat the case of generalized 1-motives over $k$, that is, we treat the way of topologizing the cohomology groups $H^n(k,M)$ of a complex $M=[L^D\to G]$ located at degrees $-1$ and 0 where $L$ is a connected linear group and $G$ is a connected algebraic group. The following exact triangle
\[ L^D \to G \to M \to L^D[1], \]
yields the following exact sequences of cohomology groups
\begin{equation} \label{nagao 1-motive 1}
	H^n(k,G) \overset{i_n}{\to} H^n(k,M) \overset{\partial_n}{\to} H^{n+1}(k,L^D)
\end{equation}
for every $n\geq -1$. Thus, for $n\geq 0$, $H^n(k,M)$ is the central term of an exact sequence of abelian groups
\begin{equation} \label{nagao 1-motive 2}
	0 \to H^n(k,G)/\ker i_n \overset{i_n'}{\to} H^n(k,M) \overset{\partial_n'}{\to} \im\partial_n \to 0,
\end{equation}
where $H^n(k,G)/\ker i_n$ is naturally endowed with the quotient topology from $H^n(k,G)$ and $\im\partial_n$ is naturally endowed with the subspace topology from $H^{n+1}(k,L^D)$. Recall that $H^n(k,G)$ and $H^{n+1}(k,L^D)$ are first countable abelian topological groups (see Proposition \ref{top alg grp}). Then, $H^n(k,G)/\ker i_n$ is also first countable since the quotient map $H^n(k,G)\to H^n(k,G)/\ker i_n$ is open. Moreover, $\im\partial_n$ is discrete for all $n\geq -1$. Hence, for every $n\geq -1$, there exists a unique topology on $H^n(k,M)$ such that \eqref{nagao 1-motive 2} becomes a topological extension (see \cite[Proposition 4.1, Corollary 2]{Calabi}). Thus, we get the following result:

\begin{proposition} \label{top motives}
	Let $M$ be a generalized 1-motive $[L^D\to G]$. Then, for every $n\in\Z$, there exists a unique topology on $H^n(k,M)$ such that the sequence \eqref{nagao 1-motive 1} is a strict exact sequence of abelian topological groups. More precisely, the topologies are the following:
	\begin{enumerate}[nosep, label=\alph*)]
		\item The group $H^0(k,M)$ is an extension of a discrete finite group by a quotient of $G(k)$. In particular, $H^0(k,M)$ satisfies that all its subgroups of finite index are open;
		\item For $n\geq 1$ or $n=-1$, $H^n(k,M)$ is endowed with the discrete topology;
		\item For $n< -1$, all the groups $H^n(k,M)$ are trivial.
	\end{enumerate}
	In particular, these groups are locally compact and second countable; and Hausdorff totally disconnected for $n\neq 0$. Moreover, the group $H^0(k,M)_{\Haus}$ is totally disconnected.
\end{proposition}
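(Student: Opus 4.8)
The plan is to read off all three cases directly from the topologies already placed on the terms of \eqref{nagao 1-motive 2} in the paragraph preceding the statement, and to convert everything into the strict-exactness assertion about \eqref{nagao 1-motive 1}. First I would reduce to the case $L=T$ a torus. Writing $L=T\times\G_a^m$ as in \eqref{alg group decomp} and using the exact sequence of complexes $0\to(\G_a^D)^m[1]\to M\to M'\to 0$ with $M'=[T^D\to G]$, the canonical isomorphisms \eqref{motives withno ga} give $H^n(k,M)\cong H^n(k,M')$ for all $n$, compatibly with the maps $i_n$ and $\partial_n$. Hence I may assume $L^D=T^D$. The payoff of this reduction is that $T^D$ is a finitely generated étale locally constant group scheme, so by Proposition \ref{top alg grp} each $H^{n+1}(k,T^D)$ is discrete, $H^0(k,T^D)=T^D(k)$ is discrete, and by the finiteness of $H^1(k,N^D)$ for $N$ of multiplicative type recorded above, $H^1(k,T^D)$ is finite.

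Next I would treat existence, uniqueness and the case analysis together. For $n<-1$ both outer terms of \eqref{nagao 1-motive 1} vanish ($H^n(k,G)=0$ since $G$ lives in degree $0$, and $H^{n+1}(k,T^D)=0$), giving c). For $n=-1$ the sequence degenerates to $0\to H^{-1}(k,M)\overset{\partial_{-1}}{\to}T^D(k)$, so $H^{-1}(k,M)$ is a subgroup of a discrete group and is discrete. For $n\geq 1$ both $H^n(k,G)$ and $H^{n+1}(k,T^D)$ are discrete by Proposition \ref{top alg grp}, so in \eqref{nagao 1-motive 2} the sub and the quotient are discrete and $H^n(k,M)$ is discrete; this is b). In all these ranges every map is strict because every group is discrete, and the topology is forced. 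For $n\geq 0$ the preamble already produces, via \cite[Proposition 4.1, Corollary 2]{Calabi}, the unique topology making \eqref{nagao 1-motive 2} a topological extension; I would then note that a topology on $H^n(k,M)$ makes \eqref{nagao 1-motive 1} strict exact if and only if it makes \eqref{nagao 1-motive 2} a topological extension, since strictness of $i_n$ identifies $H^n(k,G)/\ker i_n$ topologically with $\ker\partial_n=\im i_n$, and strictness of $\partial_n$ identifies the quotient by that image with $\im\partial_n$. This yields both existence and uniqueness. In the remaining case $n=0$, since $\im\partial_0\subseteq H^1(k,T^D)$ is finite, \eqref{nagao 1-motive 2} exhibits $H^0(k,M)$ as an extension of the finite discrete group $\im\partial_0$ by the quotient $G(k)/\ker i_0$ of $G(k)$, which is exactly a).

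The hard part, and the step I expect to be the main obstacle, is the claim in a) that every finite-index subgroup of $H^0(k,M)$ is open, together with the topological properties at $n=0$ where Hausdorffness may fail. For the finite-index claim I would argue as follows. Because $\im\partial_0$ is finite, the subgroup $G(k)/\ker i_0$ is the kernel of the continuous map $H^0(k,M)\to\im\partial_0$ into a finite discrete group, hence open in $H^0(k,M)$; so it suffices to prove the property for $G(k)/\ker i_0$, and since the quotient map $G(k)\to G(k)/\ker i_0$ is open, it suffices to prove it for $G(k)$. Now $G(k)$ is abelian, so a subgroup of index $n$ contains $nG(k)$; and since $k$ has characteristic zero, multiplication by $n$ is an isogeny fitting in the exact sequence $0\to G[n]\to G\overset{n}{\to}G\to 0$, whence by Proposition \ref{prop les} the induced map $G(k)\to G(k)$ is open and $nG(k)$ is open. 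Therefore every finite-index subgroup of $G(k)$, hence of $H^0(k,M)$, is open.

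Finally, for the \emph{in particular} statement I would combine Proposition \ref{top alg grp}, which gives that $G(k)$ is Hausdorff, locally compact, totally disconnected and second countable, with van Dantzig's theorem, which provides a neighbourhood basis of compact open subgroups of $G(k)$. These properties pass to the quotient $G(k)/\ker i_0$ under the open quotient map and then to the finite extension $H^0(k,M)$; local compactness, second countability and total disconnectedness survive, the only subtlety being that $\ker i_0$ need not be closed, so $H^0(k,M)$ is Hausdorff only after collapsing $\overline{\{0\}}$, which is why Hausdorffness is asserted solely for $n\neq 0$. For $n\neq 0$ the groups are discrete and, sitting in \eqref{nagao 1-motive 2} between countable cohomology groups of algebraic groups over a $p$-adic field, countable, hence second countable and Hausdorff.
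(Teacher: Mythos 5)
Your proof is correct and follows essentially the same route as the paper: the topology is produced and shown unique via the extension \eqref{nagao 1-motive 2} and Calabi's theorem, with the outer terms discrete (and $\im\partial_0$ finite inside $H^1(k,T^D)$) exactly as in the paper's construction preceding the statement. The only differences are cosmetic — you reduce to $L=T$ at the outset where the paper implicitly uses $H^i(k,\G_a^D)=0$, you spell out the equivalence between strict exactness of \eqref{nagao 1-motive 1} and the topological-extension property of \eqref{nagao 1-motive 2} (which the paper glosses over), and you prove directly, via the open subgroup $nG(k)$ and Proposition \ref{prop les}, the finite-index-open property that the paper delegates to a citation.
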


\begin{proof}
	We have already proved the existence and uniqueness of the topology. Note that $H^1(k,L^D)$ is finite and that all finite-index subgroups of $G(k)$ are open. So assertion $a)$ follows from the previous construction and \cite[Proposition 3.10]{topart}. Points $b)$ and $c)$ follow directly from the construction. For the last assertion: local compacity follows from \cite[p. 54, Theorem 6.7.(c)]{Stroppel}; second countability follows from \cite[p. 36, Proposition 2.C.8.(1),(2)]{Cornulier}; and totally disconnection follows from \cite[p. 56, Proposition 6.9.(b),(c)]{Stroppel} and \cite[Lemma 2.13]{topart}.
\end{proof}

\begin{remarks} \label{h0 motives}
	Let $M=[L^D\to G]$ be a generalized 1-motive. 
	\begin{enumerate}[nosep,label=\alph*)]
		\item The group $H^0(k,M)$ may be not Hausdorff since $\ker i_0$ may not be closed in $G(k)$.
		\item Observe that, with this topology, the isomorphism defined in \eqref{motives withno ga} becomes a topological isomorphism.
		\item When $G$ is an abelian variety, the profinite completion $H^0(k,M)^\wedge$ and the separation $H^0(k,M)_{\Haus}$ of $H^0(k,M)$ agree. Indeed, in this case, $H^0(k,M)$ is compact and, therefore, $H^0(k,M)_{\Haus}$ is Hausdorff compact totally disconnected, that is profinite.
	\end{enumerate}
\end{remarks}

It is important to note that the assignment of topologies in Proposition \ref{top motives} agrees with the one given by Harari and Szamuely in \cite[\S 2]{HS05}. In particular, the cohomology groups $H^n(k,[0\to G])$ and $H^n(k,G)$ are canonically isomorphic as topological groups. This fact is important because we will need the duality results for Deligne 1-motives over $p$-adic fields proven by these authors. In these results, for a Deligne 1-motive $M$, they introduced a modified profinite completion for the group $H^{-1}(k,M)$, which can be done for a generalized 1-motive $M=[L^D\to G]$ as follows: we defined $H^{-1}_\wedge(k,M)$ as
	\[ H^{-1}_\wedge(k,M) := \ker(L^D(k)^\wedge \to G(k)^\wedge), \] 
which is certainly profinite viewed with the subspace topology from $L^D(k)^\wedge$. Harari and Szamuely proved that, when $M$ is a Deligne 1-motive, the Yoneda pairing
	\[ H^{-1}(k,M) \times H^{3}(k,M^{\D}) \to \Q/\Z \]
induces a perfect continuous pairing
	\[ H^{-1}_\wedge(k,M) \times H^{3}(k,M^{\D}) \to \Q/\Z. \]
Now, in general, for a generalized 1-motive $M=[L^D\to G]$, there exists a canonical topological embedding 
	\[ \alpha^\wedge: H^{-1}(k,M)^\wedge \to H^{-1}_\wedge(k,M), \]
where $\alpha:H^{-1}(k,M) \to H^{-1}_\wedge(k,M)$ is the canonical map induced from $c_{L^D(k)}$ given in \eqref{completion map}. Indeed, the exact sequence of abelian discrete finitely generated groups
	\[ 0 \to H^{-1}(k,M) \overset{i}{\to} L^D(k) \to \coker i \to 0 \]
induces the following strict exact sequence of profinite groups
	\[ 0 \to H^{-1}(k,M)^\wedge \overset{i^\wedge}{\to} L^D(k)^\wedge \to (\coker i)^\wedge \to 0 \]
(see \cite[Proposition C.2.1]{Rosengarten}). Thus, the canonical injection $\gamma:\coker i\to G(k)$ yields a factorisation of the continuous morphism $L^D(k)^\wedge\to G(k)^\wedge$ as $L^D(k)^\wedge\to (\coker i)^\wedge \overset{\gamma^\wedge}{\to} G(k)^\wedge$. Observe that all these continuous morphism are strict since they are continuous morphisms between profinite groups. Applying the $\ker$-$\coker$ sequence (see \cite[1.2 Product-Lemma]{kercoker}) to this composition, we obtain the following strict exact sequence of profinite groups
	\[ 0 \to H^{-1}(k,M)^\wedge \overset{\alpha^\wedge}{\to} H^{-1}_\wedge(k,M) \to \ker\gamma^\wedge \to 0. \] 
In general, $\ker\gamma^\wedge$ is not trivial, therefore, $\alpha^\wedge:H^{-1}_\wedge(k,M)\to H^{-1}(k,M)^\wedge$ may not be an isomorphism. However, when $G$ is an abelian variety, we can prove that $\alpha^\wedge$ is a topological isomorphism.

\begin{proposition} \label{h-1}
 Let $M:=[L^D\overset{u}{\to}A]$ be a generalized 1-motive, where $A$ is an abelian variety. Then, $\alpha^\wedge$ is a topological isomorphism.
\end{proposition}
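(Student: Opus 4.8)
The plan is to reduce the statement to the injectivity of $\gamma^\wedge$ and to deduce this from the compactness of $A(k)$. By the strict exact sequence of profinite groups obtained just above,
\[ 0 \to H^{-1}(k,M)^\wedge \overset{\alpha^\wedge}{\to} H^{-1}_\wedge(k,M) \to \ker\gamma^\wedge \to 0, \]
the map $\alpha^\wedge$ is a topological isomorphism if and only if $\ker\gamma^\wedge = 0$. Thus the whole proposition comes down to showing that $\gamma^\wedge\colon(\coker i)^\wedge \to A(k)^\wedge$ is injective, where $\gamma\colon\coker i\to A(k)$ is the canonical injection and $\coker i$ is identified with the subgroup $\im u$ of $A(k)$.

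First I would record the relevant topological inputs. Since $A$ is an abelian variety, it is proper over $k$, so by Proposition \ref{top alg grp} the group $A(k)$ is profinite; in particular it is Hausdorff, compact, totally disconnected and (being compact) compactly generated, and it satisfies $A(k)^\wedge = A(k)$. Hence $\gamma^\wedge$ is the map $(\im u)^\wedge \to A(k)$ induced on profinite completions by the inclusion $\im u\hookrightarrow A(k)$. By Remark \ref{exactness profinite}(a), if $\im u$ is closed in $A(k)$, then this inclusion induces a topological embedding $(\im u)^\wedge \hookrightarrow A(k)^\wedge = A(k)$; as this embedding is exactly $\gamma^\wedge$, we would obtain $\ker\gamma^\wedge = 0$ and conclude.

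The crux is therefore to prove that $\im u$ is closed in $A(k)$, and this is where I expect the main difficulty. Note that $\im u$ is a countable (indeed finitely generated) subgroup of the profinite group $A(k)$; any closed subgroup of $A(k)$ is itself profinite, hence either finite or uncountable, so for $\im u$ closedness is equivalent to finiteness. Consequently the heart of the argument is to show that the image of $u\colon L^D(k)\to A(k)$ is finite, after which $\coker i$ is finite, $\gamma^\wedge = \gamma$ is injective, and we are done. I emphasize that this finiteness is not a formal consequence of the compactness of $A(k)$ alone — $A(k)$ does contain infinite finitely generated subgroups — so the proof must exploit the specific nature of the structure map $u$ of the $1$-motive; this is precisely the feature that distinguishes the abelian-variety case $G=A$ treated here from a general connected $G$, whose $k$-points carry a non-compact toric part.
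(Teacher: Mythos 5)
Your first step---reducing the proposition to the vanishing of $\ker\gamma^\wedge$---coincides with the paper's, and your topological bookkeeping up to that point is sound. The genuine gap is the one you flag yourself and do not close: your route requires $\im u^0\subseteq A(k)$ to be closed, you correctly observe that a countable subgroup of a profinite group is closed if and only if it is finite, and you offer no argument for finiteness. This gap cannot be filled, because the image of $u^0\colon L^D(k)\to A(k)$ need not be finite: already for a Deligne $1$-motive of the form $[\Z\overset{u}{\to}A]$ with $u(1)$ a point of infinite order, $\im u^0$ is an infinite finitely generated subgroup of $A(k)$, hence not closed. Note also that even granting closedness, Remark \ref{exactness profinite}(a) would apply to the \emph{subspace} topology on $\im u^0$, whereas $\gamma^\wedge$ is by construction the completion of $\coker i$ equipped with its \emph{discrete} (quotient) topology; these completions agree only when the group is finite. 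So the approach through closedness of the image is a dead end rather than a missing detail.

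The paper argues differently at precisely this point: it puts the subspace topology on $\im u^0$, identifies the profinite completion of that topological group with the closure $\overline{\im u^0}$ inside the profinite group $A(k)$ (so that the completed map is injective by construction, with no finiteness needed), and then appeals to \cite[Lemma 1.7]{GAlocal} to transfer this injectivity to $\gamma^\wedge$. All the content sits in that transfer, since the kernel of the comparison map from the discrete-topology completion $(\coker i)^\wedge\cong\hat{\Z}^r\times(\text{torsion})$ to the subspace-topology completion $\overline{\im u^0}\subseteq A(k)$ is exactly what must be shown to vanish; and since $A(k)\cong\Z_p^{N}\times F$ with $F$ finite, a continuous injection of $\hat{\Z}^r$ into $A(k)$ forces $r=0$, i.e.\ forces $\im u^0$ to be finite after all. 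Your analysis therefore does isolate the real difficulty, and in fact exposes a tension between the common reduction to $\ker\gamma^\wedge=0$ and the generality claimed in the statement; but as a proof of the proposition it is incomplete.
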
 

\begin{proof}
	We only have to prove that $\ker \gamma^\wedge$ defined above is trivial since $\alpha^\wedge$ is a continuous map between profinite groups (see \cite[III, \S2.8, p. 237, Remark 1]{GTBou}). As abelian groups, $\coker i$ is isomorphic to $\im u^0$. Then, since $A(k)$ is profinite, we have the following commutative triangle of abelian topological groups
	\[ \begin{tikzcd}
		\im u^0 \ar{r} \ar{d}{\iota} & A(k) \\
		(\im u^0)^\wedge \ar{ru} & ,
	\end{tikzcd}
	\]
where $\im u^0$ is regarded with the subspace topology from $A(k)$. Then, by \cite[Lemma 1.7]{GAlocal}, $(\im u^0)^\wedge$ agrees with the closure of $\im u^0$ in $A(k)$. Note that, $\gamma^\wedge$ is injective if and only if $(\im u^0)^\wedge\to A(k)$ is so. Hence, $\gamma^\wedge$ is injective and, therefore, $\alpha$ is a topological isomorphism.
\end{proof}

\paragraph{Cartier dual of locally algebraic groups}

Let $G$ be a locally algebraic group over $k$. We have the following exact sequence
	\[ 0 \to G^0 \to G \to \pi_0(G) \to 0, \]
where $G^0$ is the connected component of the identity of $G$ and $\pi_0(G)$ is the group of connected components, which we assume to be finitely generated. Applying the derived Cartier dual functor to the exact sequence above yields the following exact triangle in $D^b(k_{\sm})$
	\[ \pi_0(G)^D \to G^\D \to [L^D\to A^t][-1] \to \pi_0(G)^D[1], \]
where $L$ is the linear part of $G^0$ and $A$ is its abelian quotient (see Corollary \ref{dual suave}). Thus, we obtain the following exact sequence of cohomology groups
	\begin{equation} \label{dual locally}
	H^n(k,\pi_0(G)^D) \overset{a_n}{\to} H^n(k,G^{\D}) \overset{b_n}{\to} H^{n-1}(k,[L^D\to A^t]).
	\end{equation}
By Proposition \ref{top motives}, there exists a unique topology on $H^n(k,G^{\D})$ such that \eqref{dual locally} becomes strict exact for $n\neq 1$. Moreover, by Proposition \ref{top alg grp}, this topology is the discrete topology for $n\neq 0,1$. For $n=1$, the situation is a bit complicated. In this case, if we could equip $H^1(k,G^{\D})$ with a topology such that \eqref{dual locally} becomes a strict exact sequence of topological groups, then the separation $H^1(k,G^{\D})_{\Haus}$ would be canonically determined. That is, if $\tau$ and $\tau'$ are two topologies on $H^1(k,G^{\D})$ such that \eqref{dual locally} becomes a strict exact sequence of topological groups, then $(H^1(k,G^{\D}),\tau)_{\Haus}$ and $(H^1(k,G^{\D}),\tau')_{\Haus}$ are canonically isomorphic, as we explain now. \\

\begin{proposition} \label{top loc alg}
	Let $G$ be a locally algebraic group over a $p$-adic field $k$. There exists a topology $\tau$ on $H^1(k,G^\D)$ such that the Yoneda pairing
	\begin{equation} \label{yoneda paring loc alg grp} 
		(H^1(k,G^\D),\tau) \times H^{1}(k,G) \to H^2(k,\G_m) \cong \Q/\Z
	\end{equation}
	is continuous and the induced map $(H^1(k,G^\D),\tau) \to H^{1}(k,G)^*$ is continuous and strict. Moreover, the induced pairing
	\[ (H^1(k,G^\D),\tau)^\wedge \times H^{1}(k,G) \to \Q/\Z \]
	is continuous and perfect. Furthermore, for any other topology $\tau'$ on $H^1(k,G^\D)$ satisfying the property above, there exists a topological isomorphism $(H^1(k,G^\D),\tau)^\wedge \cong (H^1(k,G^\D),\tau')^\wedge$.
\end{proposition}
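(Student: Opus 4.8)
The plan is to topologize $H^1(k,G^\D)$ through the exact sequence \eqref{dual locally} at $n=1$, namely
\[ H^1(k,\pi_0(G)^D) \overset{a_1}{\to} H^1(k,G^\D) \overset{b_1}{\to} H^0(k,[L^D\to A^t]), \]
and then to deduce continuity, strictness and perfection by comparing the Yoneda pairing with the two known dualities attached to the graded pieces. First I would record that $\pi_0(G)^D$ is of multiplicative type, so $H^1(k,\pi_0(G)^D)$ is finite; hence $\im a_1$ is finite and I endow it with the discrete topology. The group $H^0(k,[L^D\to A^t])$ carries its $1$-motive topology from Proposition \ref{top motives}, and I give $\im b_1$ the subspace topology. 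As $\im a_1$ is discrete and $\im b_1$ is first countable and locally compact, the short exact sequence $0\to\im a_1\to H^1(k,G^\D)\to\im b_1\to 0$ admits a unique topology $\tau$ making it a topological extension (the same extension lemma used in Proposition \ref{top motives}); by construction \eqref{dual locally} becomes strict exact at $n=1$ for $\tau$.

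For continuity, I would use that $H^1(k,G)$ is discrete (Proposition \ref{top alg grp}), so that joint continuity of the pairing is equivalent to continuity of the induced map $p\colon (H^1(k,G^\D),\tau)\to H^1(k,G)^*$ (cf. Remark \ref{pairing discrete}). By functoriality of the Yoneda pairing, $p$ is compatible with $a_1,b_1$ and with the cohomology sequence of $0\to G^0\to G\to\pi_0(G)\to 0$: on $\im a_1$ it is governed by the Tate pairing $H^1(k,\pi_0(G)^D)\times H^1(k,\pi_0(G))\to\Q/\Z$ of finite groups, which is automatically continuous, and on $\im b_1$ by the $1$-motive pairing $H^0(k,[L^D\to A^t])\times H^1(k,G^0)\to\Q/\Z$, which is continuous by the Harari--Szamuely theory. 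Continuity of $p$, hence of the pairing, then follows from the extension. Strictness of $p$ follows in the same way: on the graded pieces the comparison maps $d_1$ (Tate) and $d_2$ (the $1$-motive map) are strict, and a five-lemma for the ladder between \eqref{dual locally} and the Pontryagin dual of the $G$-sequence transfers strictness to $p$.

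The heart of the argument is perfection after profinite completion. Here I would set up the ladder whose top row is the completion of \eqref{dual locally} and whose bottom row is obtained by applying the exact functor $(-)^*$ (Proposition \ref{pontryagin exactness}) to the discrete torsion sequence
\[ H^1(k,G^0)\to H^1(k,G)\to H^1(k,\pi_0(G)), \]
with vertical maps $d_1$, $p^\wedge$ and $d_2$. The map $d_1$ is an isomorphism by Tate local duality for the finite module $\pi_0(G)$. For $d_2$ I first replace the generalized $1$-motive $[L^D\to A^t]$ by its associated Deligne $1$-motive $[T^D\to A^t]$ using \eqref{motives withno ga} and Proposition \ref{Deligne 1-motive associated} (which does not change the cohomology, as $H^i(k,\G_a^D)=0$), and then invoke the Harari--Szamuely duality for Deligne $1$-motives over a $p$-adic field to get that $H^0(k,[L^D\to A^t])^\wedge\to H^1(k,G^0)^*$ is a topological isomorphism; here I use Remark \ref{h0 motives} to identify $H^0(k,[L^D\to A^t])^\wedge$ with its separation, since $A^t$ is an abelian variety. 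Applying the five lemma for profinite groups then yields that $p^\wedge\colon (H^1(k,G^\D),\tau)^\wedge\to H^1(k,G)^*$ is a topological isomorphism, and since $H^1(k,G)$ is discrete torsion, Pontryagin duality (Proposition \ref{Pontryagin duality}) promotes this to perfection of the completed pairing. I expect this to be the main obstacle: one must check that the Yoneda, Tate and Harari--Szamuely pairings are compatible so that the ladder commutes (up to sign), and one must cope with the fact that profinite completion is only right exact (Remark \ref{exactness profinite}), so that exactness of the completed top row has to be verified by hand, using the finiteness of $\im a_1$ and the embedding criteria of Remark \ref{exactness profinite}, or circumvented by working throughout with separations and the exact functor $(-)^*$.

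Finally, for the uniqueness statement I would use that $(-)^\wedge$ factors through the Hausdorff quotient, so the completion depends only on the separation $H^1(k,G^\D)_{\Haus}$. As explained in the discussion preceding the proposition, requiring \eqref{dual locally} to be strict exact determines this separation canonically, and our $\tau$ has this property; combined with the perfection just established, this identifies $(H^1(k,G^\D),\tau)^\wedge$ canonically with $H^1(k,G)^*$. Any second topology $\tau'$ as in the statement yields, through its own strict map to the Hausdorff group $H^1(k,G)^*$ (whose underlying homomorphism is the same $p$), the same separated quotient and hence the same completion, giving the required topological isomorphism $(H^1(k,G^\D),\tau)^\wedge\cong(H^1(k,G^\D),\tau')^\wedge$.
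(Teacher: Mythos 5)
Your overall strategy is sound and rests on the same pillars as the paper's proof: the dévissage $0\to G^0\to G\to\pi_0(G)\to 0$, classical Tate duality for $\pi_0(G)$, the Harari--Szamuely duality for the $1$-motive $[L^D\to A^t]$ together with Proposition \ref{h-1}, a five-lemma argument, and uniqueness via Hausdorff quotients. Where you genuinely diverge is in the construction of $\tau$: you build it \emph{intrinsically}, as the unique topology making $0\to\im a_1\to H^1(k,G^\D)\to\im b_1\to 0$ a topological extension, and must then \emph{prove} that the comparison map $p$ to $H^1(k,G)^*$ is continuous and strict. The paper does the opposite: it first shows $\gamma\colon H^1(k,G^\D)\to H^1(k,G)^*$ is surjective (by the ordinary five lemma applied to the five-column ladder) and \emph{defines} $\tau$ as the topology induced by $\gamma$, so that continuity and strictness of $\gamma$ and of the pairing are automatic; the work then goes into showing a posteriori that $a_1$ and $b_1$ are continuous and that the sequence of Hausdorff quotients is strict exact, from which $\gamma_{\Haus}$ is a topological isomorphism and $(H^1(k,G^\D),\tau)_{\Haus}$ is profinite. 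Your route buys a more canonical-looking topology; the paper's buys a shorter path to continuity. Both converge on the same uniqueness mechanism (the completion only sees the separation, and perfection pins the completion down as $H^1(k,G)^*$).

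Two steps in your version are under-justified and would need to be filled in with the tools the paper uses. First, your ladder for the completed pairing has only three columns ($d_1$, $p^\wedge$, $d_2$), and the rows are exact only in the middle; the five lemma needs the two flanking columns $H^{-1}(k,[L^D\to A^t])^\wedge\to H^2(k,G^0)^*$ and $H^2(k,\pi_0(G)^D)\to H^0(k,\pi_0(G))^*$, which are isomorphisms by exactly the two dualities you already invoke (Harari--Szamuely plus Proposition \ref{h-1}, and Tate duality for finitely generated modules) --- an easy but necessary addition, and it is also where the paper handles the failure of left exactness of $(-)^\wedge$ by completing only the $H^{-1}$ term (using finiteness of the adjacent group and Lemma \ref{profinite}) rather than the whole row. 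Second, "continuity of $p$ follows from the extension" is not automatic: with a finite discrete kernel and a possibly non-Hausdorff, non-discrete quotient $\im b_1$, you need the transfer results of \cite{topart} (the analogues of Corollaries 3.18--3.19 used in the paper) to pass continuity and strictness from the graded pieces to the middle term; likewise the existence of the extension topology in this configuration (discrete kernel, first-countable quotient) is the mirror image of the case covered in Proposition \ref{top motives} and should be checked against Calabi's statement rather than assumed.
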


Recall that for an abelian topological group $H$, the separation map $q_H:H\to H_{\Haus}$ induces an isomorphism $q_H^\wedge:H^\wedge\to (H_{\Haus})^\wedge$ between their profinite completions (see \cite[Lemma 1.9.(ii)]{GAlocal}). And we will prove here below that $(H^1(k,G^{\D}),\tau)_\Haus$ is profinite and, therefore, $(H^1(k,G^{\D}),\tau)_\Haus \cong (H^1(k,G^{\D}),\tau)^\wedge$.

\begin{proof}
The Yoneda pairing
	\[ H^i(k,G^\D) \times H^{2-i}(k,G) \to H^2(k,\G_m) \cong \Q/\Z \]
yields the following commutative diagram of abelian groups
	\begin{equation*}
	\begin{tikzcd}[column sep=small]
	H^{-1}(k,[L^D\to A^t]) \ar[r]  \ar[d,"\alpha"] & H^1(k,\pi_0(G)^D) \ar{r}{a_1} \ar[d,"\beta"] & H^1(k,G^{\D}) \ar{r}{b_1} \ar{d}{\gamma} & H^0(k,[L^D\to A^t]) \ar[r] \ar{d}{\delta} & H^2(k,\pi_0(G)^D) \ar[d,"\epsilon"] \\
	H^{2}(k,G^0)^* \ar[r] & H^1(k,\pi_0(G))^* \ar{r}{a_2} & H^1(k,G)^* \ar{r}{b_2} & H^{1}(k,G^0)^* \ar[r] & H^0(k,\pi_0(G))^*.
	\end{tikzcd}
	\end{equation*}
The top row is an exact sequence of abelian groups, where all groups except the middle one are topological groups. The bottom row is a topological exact sequence of Hausdorff abelian topological groups (see Propositions \ref{pontryagin exactness} and \ref{prop les})). Since $\pi_0(G)$ is étale locally isomorphic to a constant finitely generated group, $\beta$ and $\epsilon$ are topological isomorphisms by the classical local Tate duality (see \cite[II, \S 5.8]{SerreCG}). Since $H^1(k,\pi_0(X)^D)$ is finite and discrete and $H^2(k,G^0)^*$ is profinite (see Fact \ref{Pontryagin dual},b) and c) and Propositions \ref{Pontryagin duality} and \ref{top alg grp}), we may replace $H^{-1}(k,[L^D\to A^t])$ by its profinite completion without disrupting neither the commutativity nor the exactness of the diagram above. Thus, $\alpha^\wedge$ is a topological isomorphism (see \cite[Theorem 2.3]{HS05} and Proposition \ref{h-1}). By the same result, $\delta$ is a quotient map (see Remark \ref{h0 motives}.c)) and $\ker\delta$ agrees with the indiscrete group $\overline{\{0\}}$. Thus, we obtain the following diagram of abelian groups
	\begin{equation*}
	\begin{tikzcd}[column sep=small]
	H^{-1}(k,[L^D\to A^t])^\wedge \ar[r] \ar[d,"\alpha^\wedge","\vsim"'] & H^1(k,\pi_0(G)^D) \ar{r}{a_1} \ar[d,"\beta","\vsim"'] & H^1(k,G^{\D}) \ar{r}{b_1} \ar[d,twoheadrightarrow,"\gamma"] & H^0(k,[L^D\to A^t]) \ar[r] \ar[d,twoheadrightarrow,"\delta"] & H^2(k,\pi_0(G)^D) \ar[d,"\epsilon","\vsim"'] \\
	H^{2}(k,G^0)^* \ar[r] & H^1(k,\pi_0(G))^* \ar{r}{a_2} & H^1(k,G)^* \ar{r}{b_2} & H^{1}(k,G^0)^* \ar[r] & H^0(k,\pi_0(G))^*,
	\end{tikzcd}
	\end{equation*}
where $\gamma$ is surjective by the classical five-lemma. Let $\tau$ be the induced topology on $H^1(k,G^{\D})$ by $\gamma$. Thus, $(H^1(k,G^{\D}),\tau)$ becomes an abelian topological group and $\gamma$ becomes a topological quotient. In particular, it is continuous and strict. Since $H^1(k,G)$ is discrete by Proposition \ref{top alg grp}, the pairing \eqref{yoneda paring loc alg grp} is continuous (see Remark \ref{pairing discrete}). \\

Note now that $a_1$ is strict and continuous since $H^1(k,G)^*$ is Hausdorff and the induced map $\beta':\im a_1\to\im a_2$ is an isomorphism. Recalling that $\ker\delta$ is indiscrete, \cite[Corollary 3.19]{topart} implies that $b_1$ is continuous since $b_2$ is continuous. Thus, we obtain the following exact sequence of Hausdorff topological abelian groups
\begin{equation} \label{top loc alg seq}
	H^1(k,\pi_0(G)^D) \overset{(a_1)_{\Haus}}{\to} (H^1(k,G^{\D}),\tau)_{\Haus} \overset{(b_1)_{\Haus}}{\to} H^0(k,[L^D\to A^t])_{\Haus}.
\end{equation}
Indeed, since $\delta_{\Haus}$ is a topological isomorphism, $(b_1)_\Haus=\delta_{\Haus}^{-1}\circ b_2\circ\gamma_{\Haus}$ is continuous and strict.  Furthermore, the exact sequence \eqref{top loc alg seq} fits in the following commutative diagram of Hausdorff locally compact abelian topological groups
	\begin{equation*}
		\begin{tikzcd}
			H^{-1}(k,[L^D\to A^t])^\wedge \ar[r] \ar[d,"\vsim"'] & H^1(k,\pi_0(G)^D) \ar{r}{(a_1)_{\Haus}} \ar[d,"\beta","\vsim"'] & (H^1(k,G^{\D}),\tau)_{\Haus} \ar{r}{(b_1)_{\Haus}} \ar[d,"\vsim"',"\gamma_{\Haus}"] & H^0(k,[L^D\to A^t])_{\Haus} \ar[r] \ar[d,"\delta_{\Haus}","\vsim"'] & H^2(k,\pi_0(G)^D) \ar[d,"\vsim"'] \\
			H^{2}(k,G^0)^* \ar[r] & H^1(k,\pi_0(G))^* \ar{r}{a_2} & H^1(k,G)^* \ar{r}{b_2} & H^{1}(k,G^0)^* \ar[r] & H^0(k,\pi_0(G))^*,
		\end{tikzcd}
	\end{equation*}
whose rows are strict exact. Hence, $\gamma_\Haus$ is a topological isomorphism (see \cite[Proposition 2.8]{FulpGrif}), whence $(H^1(k,G^{\D}),\tau)_{\Haus}$ is profinite since $H^1(k,G)$ is discrete. Finally, since $H^1(k,\pi_0(G)^D)$ is finite and discrete, \cite[Proposition 3.22]{topart} implies that for every topology $\tau'$ such that 
\[ H^1(k,\pi_0(G)^D) \overset{a_1}{\to} (H^1(k,G^{\D}),\tau') \overset{b_1}{\to} H^0(k,[L^D\to A^t]), \]
is a strict exact sequence of abelian topological groups, there exists a topological isomorphism between $(H^1(k,G^{\D}),\tau')_{\Haus}$ and $(H^1(k,G^{\D}),\tau)_{\Haus}$, and these coincide with their corresponding profinite completions.
\end{proof}

\section{The truncated homology}

Let $k$ be a field of characteristic zero. Let $\phi:X\to\Spec k$ be a proper $k$-scheme. We define the \emph{$i$th group of truncated homology of $X$} as
\begin{equation} \label{ht}
	H_i(X,\Z)_\tau := R\Hom_{k_{\sm}}((\tau_{\leq 1}R\phi_*\G_{m,X})[i],\G_{m,k}) \cong H^i(k_{\sm},(\tau_{\leq 1}R\phi_*\G_{m,X})^\D).
\end{equation}
This definition can be regarded in terms of $\Ext$ groups, just as van Hamel's definition (see \cite[\S 2]{vH04}). We opt for this definition because it is more confortable to introduce the properties of truncated homology in terms of morphisms in $D^b(k_{\sm})$ and compositions of them.

In this section, we will define basic operations for the truncated homology giving the details of those properties that are not fully explained in van Hamel's paper (cf. \cite[\S 2.1]{vH04}). 

Throughout this section, $X$ and $Y$ will be proper $k$-schemes with structure maps $\phi:X\to\Spec k$ and $\psi:Y\to\Spec k$ respectively.


\subsection{The push-forward morphism}

Let $f:X\to Y$ be a morphism of $k$-schemes, that is a morphism of schemes $f:X\to Y$ such that $\phi=\psi\circ f$ or equivalently the following diagram commutes
\begin{equation*}
	\begin{tikzcd}[column sep=tiny]
		X \arrow[rr,"f"] \arrow{rd}[swap]{\phi} & & Y \arrow[dl,"\psi"] \\
		& \Spec(k) & .
	\end{tikzcd}
\end{equation*}
Firstly, we have the canonical isomorphism of derived functors
\begin{equation} \label{der comp}
	R\phi_* \overset{\sim}{\to} R\psi_*\circ Rf_*,
\end{equation}
associated to the derived functor of the composition (see \cite[p. 334, Proposition 13.3.13]{SK}), which induces by applying the truncation functor the following canonical isomorphism of functors
\begin{equation*}
	\tau_{\leq 1}R\phi_* \overset{\sim}{\to} \tau_{\leq 1}R\psi_* Rf_*.
\end{equation*}
Thus we obtain the following canonical isomorphism of groups
\begin{equation} \label{comp}
	H_i(X,\Z)_\tau \overset{\sim}{\to} \Hom_{D(k_{\sm})}((\tau_{\leq 1}R\psi_*(Rf_*\G_{m,X}))[i],\G_{m,k})
\end{equation}
for all $i\in\Z$. On the other hand, from the adjunction $f^*\dashv f_*$ and the canonical isomorphism $f^*\G_{m,Y}\cong\G_{m,X}$ (see \cite[p. 68, Remark II.3.1.(d)]{EC}), we obtain a canonical isomorphism 
\begin{equation} \label{adjunction}
\Hom_{S(X_{\sm})}(\G_{m,X},\G_{m,X}) \cong \Hom_{S(Y_{\sm})}(\G_{m,Y},f_*\G_{m,X}).
\end{equation}
Let us define $\alpha_f:\G_{m,Y}\to Rf_*\G_{m,X}$ as the composition
\begin{equation*} 
	\alpha_f:=(\G_{m,Y}\to f_*\G_{m,X}\to Rf_*\G_{m,X}),
\end{equation*}
where $\G_{m,Y}\to f_*\G_{m,X}$ is induced by $\id_X$ from \eqref{adjunction} and $f_*\G_{m,X}\to Rf_*\G_{m,X}$ is the canonical map $\tau_{\leq 0}Rf_*\G_{m,X}\to Rf_*\G_{m,X}$. Then we get a morphism
\begin{equation} \label{r alfa}
	\tau_{\leq 1}R\psi_*(\alpha_f): \tau_{\leq 1}R\psi_*\G_{m,Y} \to \tau_{\leq 1}R\psi_*(Rf_*\G_{m,X}).
\end{equation}
In this way, we get a canonical morphism
\begin{equation} \label{push-forward}
	f_*^{(i)}:H_i(X,\Z)_\tau\to H_i(Y,\Z)_\tau
\end{equation}
defined as the composition
\[ H_i(X,\Z)_\tau \overset{\sim}{\longrightarrow} \Hom_{D(k_{\sm})}((\tau_{\leq 1}R\psi_*(Rf_*\G_{m,X}))[i],\G_{m,k}) \overset{\theta}{\longrightarrow} H_i(Y,\Z)_\tau, \]
where the first isomorphism comes from \eqref{comp} and $\theta$ maps a morphism
\[ \eta: (\tau_{\leq 1}R\psi_*(Rf_*\G_{m,X}))[i] \to \G_{m,k} \]
to the morphism
\[ \eta\circ(\tau_{\leq 1}R\psi_*(\alpha_f)[i]):\tau_{\leq 1}R\psi_*\G_{m,Y}[i]\to\G_{m,k}, \]
where $\tau_{\leq 1}R\psi_*(\alpha_f)$ is the map defined in \eqref{r alfa}. The morphism \eqref{push-forward} is called the \emph{push-forward of $f$ in the $i$th group of truncated homology}. It is direct from the definition that this construction is functorial, that is, $(g\circ f)^{(i)}_*$ and $g_*^{(i)}\circ f_*^{(i)}$ are canonically isomorphic for every pair of composable maps $g$ and $f$ of proper $k$-schemes (this isomorphism comes from \eqref{der comp}).


\subsection{The truncated homology of a point}

For any morphism $\pi:\Spec L\to\Spec k$ induced by a finite separable extension $L/k$, we have that $\tau_{\leq 1}R\pi_*\G_{m,L}$ is quasi-isomorphic to $\pi_*\G_{m,L}$. Indeed, the sheaf $R^1\pi_*\G_{m,L}$ agrees with the étale sheafification of the presheaf
\[ U \mapsto H^1((U_L)_{\sm},\G_{m,U_L}) \] 
(see \cite[p. 253, Theorem 9.2.5]{FGA}). Whence we conclude by Hilbert's theorem 90. 
Thus, from the exact triangle
\[ \pi_*\G_{m,L} \to \tau_{\leq 1}R\pi_*\G_{m,L} \to R^1\pi_*\G_{m,L} \to \pi_*\G_{m,L}[1], \]
we conclude that the canonical map $\pi_*\G_{m,L} \to \tau_{\leq 1}R\pi_*\G_{m,L}$ is an isomorphism in $D(k_{\sm})$. Then we have that
\[ H_i(\Spec L,\Z)_\tau \cong H^i(k_{\sm},R\sHom_{D(k_{\sm})}(\pi_*\G_{m,L},\G_{m,k})). \]
Now, observe that $\pi_*\G_{m,L}$ is represented by the Weil restriction $R_{L/k}(\G_{m,L})$, which is also a torus since $L/k$ is separable. Thus, by Proposition \ref{dual suave}, we have that
\[ H_i(\Spec L,\Z)_\tau \cong H^i(k_{\sm},R_{L/k}(\G_{m,L})^D). \]
Let $M/k$ be the Galois closure of $L/k$. Then, if $\Gamma:=\Gal(M/k)$ and $\Delta:=\Gal(M/L)$, we have that $R_{L/k}(\G_{m,L})^D$ corresponds to the $\Gamma$-module $\Z[\Gamma/\Delta]$. Then, we have that
\[ H_i(\Spec L,\Z)_\tau \cong H^i(k,\Z[\Gamma/\Delta]), \]
where the right group is the group of Galois cohomology of the $\Gal(k)$-module $\Z[\Gamma/\Delta]$. Hence, by Shapiro's lemma, we conclude that
\begin{equation}\label{homology of a point}
	H_i(\Spec L,\Z)_\tau \cong H^i(L,\Z),
\end{equation}
where the last group is a group of Galois cohomology of the trivial $\Gal(L)$-module $\Z$. \\

Now, by the definition of the push-forward \eqref{push-forward}, we have that the map
\[ \pi_*^{(i)} : H_i(\Spec L,\Z)_\tau \to H_i(\Spec k,\Z)_\tau \]
is defined as the composition
\[ H_i(\Spec L,\Z)_\tau \overset{\sim}{\longrightarrow} \Hom_{D(k_{\sm})}(R_{L/k}(\G_{m,X})[i],\G_{m,k}) \overset{\circ\alpha_\pi[i]}{\longrightarrow} H_i(\Spec k,\Z)_\tau, \]
where $\alpha_\pi:\G_{m,k}\to R_{L/k}(\G_{m,k})$ is the diagonal embedding. Hence, the push-forward $\pi_*^{(i)}$ agrees with the corestriction map on Galois cohomology 
\[ \Cores^{(i)}:H^i(L,\Z) \to H^i(k,\Z), \]
which in degree zero 
\[ \pi_*^{(0)} : H_0(\Spec L,\Z)_\tau \to H_0(\Spec k,\Z)_\tau \]
is just multiplication by $[L:k]$ in $\Z$ via the identification $H_0(\Spec L,\Z)_\tau=H^0(L,\Z)=\Z$.


\subsection{The degree map}

For every proper $k$-scheme $\phi:X\to \Spec k$, the pushforward $\phi_*^{(0)}$ \eqref{push-forward} defines a canonical map
\begin{equation*} 
	\deg_{\phi}:H_0(X,\Z)_\tau \to \Z
\end{equation*}
via the identification $H_0(X,\Z)_\tau\cong \Z$ given in \eqref{homology of a point}. We will call $\deg_{\phi}$ the \emph{degree map for $\phi$}. Sometimes we will just write $\deg_X$ instead of $\deg_\phi$ and we will call it the \emph{degree map for $X$}.

\begin{remark} \label{deg point}
	Note that $\deg$ is surjective whenever $X(k)\neq\varnothing$. Indeed, a morphism $\Spec k \to X$ defines a section for the degree map.
\end{remark}

\begin{definition} \label{pseudo-index}
	For a proper $k$-scheme $\phi:X\to \Spec k$, we define \emph{the truncated pseudo-index} $\pseudo(X)$ of $X$ as the order of $\coker\deg_\phi$.
\end{definition}

Observe that the pushforward $\phi_*^{(i)}$ \eqref{push-forward} comes from the natural map $\G_{m,k}\to\tau_{\leq 1}R\phi_*\G_{m,k}$ once one takes cohomology on the dual morphism $(\tau_{\leq 1}R\phi_*\G_{m,k})^\D \to \Z$, that is,
\[ \deg_\phi: H^0(k_{\sm},(\tau_{\leq 1}R\phi_*\G_{m,k})^\D) \to \Z. \]
Thus, we obtain the following description of $\ker\deg_\phi$.

\begin{proposition} \label{ker coker deg}
	Let $X$ be a proper $k$-variety. We have the following exact sequence of abelian groups 
	\[ 0 \to \Ext^1_{k_{\sm}}(\Pic_{X/k},\G_{m,k}) \to H_0(X,\Z)_\tau \overset{\deg_\phi}{\to} \pseudo(X)\Z \to 0. \]
\end{proposition}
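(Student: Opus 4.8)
The plan is to extract the asserted exact sequence directly from the canonical truncation triangle defining $\tau_{\leq 1}R\phi_*\G_{m,X}$, by applying the derived Cartier dual and reading off the long exact cohomology sequence. The homological content is formal once the cohomology sheaves are identified and the local computation $\G_{m,k}^\D\cong\Z$ is in hand.

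First I would identify the cohomology sheaves of $\tau_{\leq 1}R\phi_*\G_{m,X}$. Since $X$ is a proper $k$-variety, in particular geometrically integral, one has $\phi_*\G_{m,X}\cong\G_{m,k}$, so $R^0\phi_*\G_{m,X}=\G_{m,k}$; and $R^1\phi_*\G_{m,X}$ is the relative Picard functor $\Pic_{X/k}$, which is representable by a group scheme locally of finite type over $k$ and, $k$ having characteristic zero, is smooth (hence an object of $D^b(k_{\sm})$). The canonical truncation triangle then reads
\[ \G_{m,k} \to \tau_{\leq 1}R\phi_*\G_{m,X} \to \Pic_{X/k}[-1] \to \G_{m,k}[1], \]
where the first map is the canonical map $\tau_{\leq 0}\to\tau_{\leq 1}$, i.e. exactly the map $\alpha_\phi$ whose dual defines the degree.

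Next I would apply the contravariant functor $(-)^\D=R\sHom_{k_{\sm}}(-,\G_{m,k})$, obtaining the triangle
\[ \Pic_{X/k}^\D[1] \to (\tau_{\leq 1}R\phi_*\G_{m,X})^\D \to \G_{m,k}^\D \to \Pic_{X/k}^\D[2]. \]
The key local computation is $\G_{m,k}^\D\cong\Z$: indeed $\sHom_{k_{\sm}}(\G_{m,k},\G_{m,k})=\Z$, and by Lemma \ref{triviality of ext} applied to the torus $\G_{m,k}$ the higher sheaves $\sExt^i_{k_{\sm}}(\G_{m,k},\G_{m,k})$ vanish for $i\geq 1$. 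Taking the long exact sequence in $H^*(k_{\sm},-)$ and using $H^0(k_{\sm},\Pic_{X/k}^\D[1])=H^1(k_{\sm},\Pic_{X/k}^\D)\cong\Ext^1_{k_{\sm}}(\Pic_{X/k},\G_{m,k})$ (from the identity $H^i(k_{\sm},C^\D)\cong\Ext^i_{D(k_{\sm})}(C,\G_m)$), together with $H^0(k_{\sm},\Z)=\Z$ and the vanishing $H^{-1}(k_{\sm},\Z)=0$, the relevant portion becomes
\[ 0 \to \Ext^1_{k_{\sm}}(\Pic_{X/k},\G_{m,k}) \to H_0(X,\Z)_\tau \overset{\deg_\phi}{\to} \Z, \]
where the rightmost map is identified with $\deg_\phi$ via the description of the degree map as $H^0$ of the dualized natural map recorded just before the statement. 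Since $\im\deg_\phi=\pseudo(X)\Z$ by the very definition of $\pseudo(X)$ as the order of $\coker\deg_\phi$, replacing $\Z$ by this image gives the asserted short exact sequence.

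The step I expect to be the main obstacle is not the homological bookkeeping but the justification of the cohomology-sheaf identifications: that $R^0\phi_*\G_{m,X}=\G_{m,k}$ (which uses properness and geometric integrality) and, more delicately, that $R^1\phi_*\G_{m,X}=\Pic_{X/k}$ is representable by a smooth group scheme, so that it legitimately defines an object of $D^b(k_{\sm})$ to which the $\Ext$-identification and the earlier duality machinery apply. Once these are granted, the rest follows formally from dualizing the truncation triangle and the clean computation $\G_{m,k}^\D\cong\Z$.
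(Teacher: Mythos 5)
Your proposal is correct and follows essentially the same route as the paper: dualize the canonical truncation triangle $\G_{m,k}\to\tau_{\leq 1}R\phi_*\G_{m,X}\to\Pic_{X/k}[-1]$, identify $\G_{m,k}^\D\cong\Z$, take the long exact cohomology sequence, and read off $H^1(k_{\sm},\Pic_{X/k}^\D)\cong\Ext^1_{k_{\sm}}(\Pic_{X/k},\G_{m,k})$ as the kernel of $\deg_\phi$. The extra details you supply (the vanishing of $H^{-1}(k_{\sm},\Z)$, the identification of the image with $\pseudo(X)\Z$) are exactly the points the paper leaves implicit.
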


\begin{proof}
	We have the following exact triangle
	\[ \G_{m,k}\to\tau_{\leq 1}R\phi_*\G_{m,k}\to\Pic_{X/k}[-1]\to\G_{m,k}[1]. \]
	Applying the derived Cartier dual $(-)^\D$ to the triangle above, we obtain the following exact triangle in $D(k_\sm)$
	\[ \Pic_{X/k}^\D[1] \to (\tau_{\leq 1}R\phi_*\G_{m,k})^\D \to \Z \to \Pic_{X/k}^\D. \]
	Applying $R\Gamma(k_\sm,-)$ to the triangle here above, we obtain the following exact sequence of abelian groups
	\[ 0 \to H^0(k_\sm,\Pic_{X/k}^\D[1]) \to H^0(k_\sm,(\tau_{\leq 1}R\phi_*\G_{m,k})^\D) \overset{\deg_\phi}{\to} \Z. \]
	Now, $H^0(k_\sm,\Pic_{X/k}^\D[1])$ is isomorphic to $H^1(k_\sm,\Pic_{X/k}^\D)$ and, this last group is canonically isomorphic to $\Ext^1_{k_{\sm}}(\Pic_{X/k},\G_{m,k})$.
\end{proof}

Another formal property of the pseudo-index is the following.

\begin{proposition} \label{divisibility pseudo index}
	Let $f:X\to Y$ be a morphism of proper $k$-varieties. Then, $\pseudo(Y)$ divides $\pseudo(X)$.	
\end{proposition}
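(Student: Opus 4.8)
The plan is to reduce the statement to the functoriality of the push-forward together with the observation that the degree map is itself a push-forward, namely the push-forward along the structure morphism.

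First I would record that $\phi = \psi\circ f$, where $\phi:X\to\Spec k$ and $\psi:Y\to\Spec k$ are the structure morphisms. Since $X$, $Y$ and $\Spec k$ are all proper $k$-schemes and $f,\psi$ are morphisms of such, the functoriality of the push-forward in degree $0$ (from the construction of \eqref{push-forward}, which is functorial via \eqref{der comp}) gives the identity
\[
	\phi_*^{(0)} = \psi_*^{(0)}\circ f_*^{(0)}.
\]
Under the canonical identification $H_0(\Spec k,\Z)_\tau\cong\Z$ of \eqref{homology of a point}, this reads $\deg_\phi = \deg_\psi\circ f_*^{(0)}$; that is, the degree map of $X$ factors through the push-forward $f_*^{(0)}:H_0(X,\Z)_\tau\to H_0(Y,\Z)_\tau$ followed by the degree map of $Y$.

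From this factorization I would immediately deduce the inclusion of images
\[
	\im\deg_\phi = \deg_\psi\bigl(f_*^{(0)}(H_0(X,\Z)_\tau)\bigr)\subseteq \im\deg_\psi.
\]
By Proposition \ref{ker coker deg} (or directly from Definition \ref{pseudo-index}), the image of $\deg_\phi$ is $\pseudo(X)\Z$ and the image of $\deg_\psi$ is $\pseudo(Y)\Z$. Hence $\pseudo(X)\Z\subseteq\pseudo(Y)\Z$, which is precisely the statement that $\pseudo(Y)$ divides $\pseudo(X)$.

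There is essentially no serious obstacle here: the only point requiring care is to check that the functoriality of the push-forward established for composable maps of proper $k$-schemes applies with the outer map being the structure morphism $\psi$ of $Y$, for which one simply records that $\Spec k$ is a proper $k$-scheme and that $\phi=\psi\circ f$. The remaining step is the elementary fact that, for subgroups of $\Z$, the containment $a\Z\subseteq b\Z$ is equivalent to $b\mid a$.
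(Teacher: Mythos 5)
Your proof is correct and follows essentially the same route as the paper: both rest on the commutative triangle $\deg_\phi=\deg_\psi\circ f_*^{(0)}$ coming from the functoriality of the push-forward. The only cosmetic difference is that you conclude via the containment of images $\pseudo(X)\Z\subseteq\pseudo(Y)\Z$, while the paper invokes the $\ker$-$\coker$ sequence to get a surjection $\coker\deg_X\twoheadrightarrow\coker\deg_Y$; the two bookkeeping steps are equivalent.
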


\begin{proof}
	That is a formal consequence of the functoriality of the pushforward on the truncated homology. We have the following commutative triangle
	\[ \begin{tikzcd}
		H_0(X,\Z)_\tau \ar{rd}[swap]{\deg_X} \ar{rr}{f_*^{(0)}} && H_0(X,\Z)_\tau \ar{ld}{\deg_Y} \\ & \Z. &
	\end{tikzcd} \]
	The $\ker$-$\coker$ exact sequence (see \cite[1.2 Product-Lemma]{kercoker}) yields a surjection
	\[ \coker\deg_X \twoheadrightarrow \coker\deg_Y, \]
	whence, $\pseudo(Y)$ divides $\pseudo(X)$.
\end{proof}

Let $\phi:X\to \Spec k$ be a proper $k$-scheme and $\pi:\Spec L\to \Spec k$ the morphism induced by a finite extension $L/k$ of $k$. By the functoriality of the push-forward in truncated homology, the following cartesian diagram
\[ \begin{tikzcd}
	X_L \arrow{r}{\pi_L} \arrow{d}[swap]{\phi_L} & X \arrow{d}{\phi} \\
	\Spec L \arrow{r}[swap]{\pi} & \Spec k
\end{tikzcd} \]
yields the following commutative diagram of abelian groups
\[ \begin{tikzcd}
	H_0(X_L,\Z)_\tau \arrow{r}{{\pi_L}_*^{(0)}} \arrow{d}[swap]{\deg_{\phi_L}} & H_0(X,\Z)_\tau \arrow{d}{\deg_{\phi}} \\
	\Z \arrow{r}[swap]{\cdot[L:k]} & \Z .
\end{tikzcd} \]
Thus we obtain the following commutative diagram of abelian groups with exact rows
\begin{equation} \label{deg diagram}
	\begin{tikzcd}
		& H_0(X_L,\Z)_\tau \arrow{r}{{\pi_L}_*^{(0)}} \arrow{d}{\deg_{\phi_L}} & H_0(X,\Z)_\tau \arrow{d}{\deg_{\phi}} \ar{r} & \coker({\pi_L}_*^{(0)}) \arrow{r} \arrow{d}{\delta_L} & 0 \\
		0 \ar{r} & \Z \arrow{r}[swap]{\cdot[L:k]} & \Z \ar{r} & \frac{\Z}{[L:k]\Z} \ar{r} & 0.
	\end{tikzcd}
\end{equation}
Applying the Snake Lemma to \eqref{deg diagram}, we obtain the following exact sequence of abelian groups
\begin{equation} \label{deg sequence}
	0 \to \ker({\pi_L}_*^{(0)}) \to \ker \deg_{\phi_L} \to \ker \deg_{\phi} \to \ker \delta_L \to \coker \deg_{\phi_L} \to \coker \deg_{\phi} \overset{q}{\to} \coker \delta_L \to 0.
\end{equation}
In this way, we obtain the following proposition.

\begin{proposition} \label{deg surjective}
	Let $\phi:X\to\Spec(k)$ be a proper $k$-scheme. Then $\pseudo(X)$ divides the index $I(X)$ of $X$. In particular, if $X$ has a zero-cycle of degree 1, then $\deg_\phi$ is surjective.
\end{proposition}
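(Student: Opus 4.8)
The plan is to bound $\pseudo(X)$ by the degree of a single closed point and then pass to the greatest common divisor. Recall that the index $I(X)$ is the positive generator of the subgroup of $\Z$ generated by the degrees $[k(x):k]$ of the closed points $x$ of $X$, so that $I(X)=\gcd_x [k(x):k]$; in particular $X$ has a zero-cycle of degree $1$ precisely when $I(X)=1$. Since a positive integer that divides every $[k(x):k]$ necessarily divides their greatest common divisor, it suffices to prove that $\pseudo(X)$ divides $[k(x):k]$ for each closed point $x$.

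So fix a closed point $x$ and put $L:=k(x)$, a finite (separable, as $\char k=0$) extension of $k$. The canonical morphism $\Spec L=\Spec k(x)\to X$ over $k$, paired with $\id_{\Spec L}$, yields by the universal property of the fibre product an $L$-point of $X_L=X\times_k L$, so that $X_L(L)\neq\varnothing$. Applying Remark \ref{deg point} to the proper $L$-scheme $\phi_L:X_L\to\Spec L$ gives that $\deg_{\phi_L}$ is surjective, i.e. $\coker\deg_{\phi_L}=0$. I would then feed this vanishing into the exact sequence \eqref{deg sequence} attached to diagram \eqref{deg diagram}: its final terms collapse to an isomorphism $\coker\deg_\phi\cong\coker\delta_L$. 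As $\delta_L$ maps into $\Z/[L:k]\Z$, its cokernel is a quotient of $\Z/[L:k]\Z$; hence $\coker\deg_\phi$ is a cyclic group whose order divides $[L:k]$, and therefore $\pseudo(X)=\lvert\coker\deg_\phi\rvert$ divides $[k(x):k]$.

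Running $x$ over all closed points and taking the gcd yields $\pseudo(X)\mid I(X)$, proving the first assertion. For the last statement, a zero-cycle of degree $1$ forces $I(X)=1$, hence $\pseudo(X)=1$ and $\coker\deg_\phi=0$, i.e. $\deg_\phi$ is surjective. The only points requiring care lie at the interface between the combinatorial index and the homological degree map: one must translate $I(X)=\gcd_x[k(x):k]$ and verify that a closed point of degree $d$ produces, via base change, the surjectivity of $\deg_{\phi_L}$ furnished by Remark \ref{deg point}. Everything else is a formal consequence of the snake-lemma sequence \eqref{deg sequence}.
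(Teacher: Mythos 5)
Your proposal is correct and follows essentially the same route as the paper: both arguments reduce to showing $\pseudo(X)\mid[L:k]$ whenever $X$ has an $L$-rational point, by combining Remark \ref{deg point} applied to $X_L$ with the snake-lemma sequence \eqref{deg sequence} to identify $\coker\deg_\phi$ with a quotient of $\Z/[L:k]\Z$, and then passing to the gcd over closed points. Your write-up merely makes explicit two steps the paper leaves implicit (the identification $I(X)=\gcd_x[k(x):k]$ and the fact that a closed point of residue field $L$ yields an $L$-point of $X_L$).
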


\begin{proof}
	Suppose that $X$ has an $L$-rational point for some finite extension $L/k$. From the exact sequence \eqref{deg sequence} and Remark \ref{deg point}, $q:\coker \deg_{\phi} \to \coker \delta_L$ is an isomorphism. Hence $\coker \deg_{\phi}$ is a quotient of the cyclic group $\frac{\Z}{[L:k]\Z}$ and, therefore, its order $\pseudo(X)$ divides $[L:k]$. Then, $\pseudo(X)$ divides the degree of all zero-cycles of $X$, whence $\pseudo(X)$ divides the index $I(X)$ of $X$.
\end{proof}


\subsection{Yoneda pairing} \label{yp}

For every $k$-scheme $X$, we have the Yoneda pairing 
\begin{align} \label{yoneda}
	\begin{split}
		H_i(X,\Z)_\tau \times H^j(k_{\sm},\tau_{\leq 1}R\phi_*\G_{m,X}) &\to H^{j-i}(k_{\sm},\G_{m,k}) \\
		(\gamma,\omega) &\mapsto \gamma\cdot\omega.
	\end{split}
\end{align}
induced for arbitrary $i,j$ via the canonical map
\begin{align*}
	H_i(X,\Z)_\tau=\Hom_{D(k_{\sm})}(\tau_{\leq 1}R\phi_*\G_{m,X},\G_{m,k}[-i]) &\to \Hom(H^{j}(k_{\sm},\tau_{\leq 1}R\phi_*\G_{m,X}),H^{j-i}(k_{\sm},\G_{m,k})),
\end{align*}
which maps a morphism $\gamma$ to the cohomology map $H^j(\gamma)$. Here we canonically identify $H^j(k_{\sm},\G_{m,k}[-i])$ and $H^{j-i}(k_{\sm},\G_{m,k})$. From the definition of the Yoneda pairing \eqref{yoneda}, we see that it is functorial on $X$. Thus, for every morphism of $k$-schemes $f:X\to Y$, the following formula holds
\[ f_*\gamma\cdot\omega=\gamma\cdot f^*\omega, \]
which is deduced from the definition of the push-forward in truncated homology and pull-back in sheaf cohomology. Moreover, the pairing \eqref{yoneda} is functorial on $X$ since it is defined as composition of morphisms in $D(k_{\sm})$.


\section{Computations of truncated homology}

In this section, $k$ will still be a field of characteristic zero and $\phi:X\to\Spec k$ will be a proper $k$-variety. Recall that the Picard variety $\Pic_{X/k}^0$ of $X$ is an extension
\begin{equation} \label{pic0} 
	0 \to L \to \Pic_{X/k}^0 \to A \to 0
\end{equation}
of an abelian variety $A$ by a connected linear group $L$.


\subsection{A filtration on the truncated derived direct image of $\G_{m,X}$} \label{fil}

The goal of this section is to compute the truncated homology of $X$. In order to do that, we will define a filtration of 
\[ \s{C}_X :=\tau_{\leq 1}R\phi_*\G_{m,X}. \]
Recall that a filtration on derived categories is just a sequence of morphisms since in this context we do not have a notion of subobject. We will construct this filtration taking into account that $R^1\phi_*\G_{m,X}$ is represented by the locally algebraic group $\Pic_{X/k}$, which is an extension of the finitely generated group $\NS_{X/k}$ by the algebraic group $\Pic_{X/k}^0$. Firstly, we define
\[ \s{F}_2:=\s{C}_X, \quad \s{F}_0:=\tau_{\leq 0}\s{C}_X \quad \text{and} \quad \s{F}_{-1}=0. \]
We note that
\[ \s{F}_0 \cong H^0(\s{C}_X) \cong \phi_*\G_{m,X} \cong \G_{m,k}, \]
where the last isomorphism comes from the fact that $X$ is proper and geometrically integral (see \cite[\href{https://stacks.math.columbia.edu/tag/0BUG}{Lemma 0BUG}]{stacks-project}). Now, there is a canonical map
\[ \s{F}_2\to\NS_{X/k}[-1], \]
which is defined as the composition
\[ \s{F}_2 \to \tau_{\geq 1}\s{F}_2 \to \NS_{X/k}[-1]. \]
where the last arrow is induced by the canonical projection $\Pic_{X/k}\to\NS_{X/k}$ using the fact that 
\[ \tau_{\geq 1}\s{F}_2 = \tau_{\geq 1}\s{C}_X \cong H^1(\s{C}_X)[-1] = (R^1\phi_*\G_{m,X})[-1] \cong \Pic_{X/k}[-1]. \]
Thus, we define
\[ \s{F}_1:=\cone(\s{F}_2\to\NS_{X/k}[-1])[-1], \]
and, thereby, it yields the following exact triangle
\begin{equation*} 
	\s{F}_1 \to \s{F}_2 \to \NS_{X/k}[-1] \to \s{F}_1[1],
\end{equation*}
which also defines the morphism $\s{F}_1 \to \s{F}_2$ in the filtration. The morhpism $\s{F}_0\to\s{F}_1$ is defined as follows: we have the following diagram
\begin{equation} \label{g1}
	\begin{tikzcd}
		\s{F}_0 \ar[r] \ar[d] & 0 \ar[r] \ar[d] & \s{F}_0[1] \ar[r,equal] \ar[d,dashrightarrow] & \s{F}_0[1] \ar[d] \\
		\s{F}_2 \ar[r] & \NS_{X/k}[-1] \ar[r] & \s{F}_1[1] \ar[r] & \s{F}_2[1],
	\end{tikzcd}
\end{equation}
where the first vertical map is the canonical map $\tau_{\leq 0}\s{C}_X \to \s{C}_X$ and the second one is the trivial map. The diagram \eqref{g1} induces a morphism of exact triangles, that is, the dashed arrow can be completed in the diagram. Since $\Hom_{D(k_{\sm})}(0,\s{F}_2)=0$ and $\Hom_{D(k_{\sm})}(\s{F}_0[1],\NS_{X/k}[-1])=0$ (see \cite[p. 324, Proposition 13.1.16]{SK}), this map is in fact unique (see \cite[p. 246, Proposition 10.1.17]{SK}). Thus, shifting by $-1$ the dashed arrow in diagram \eqref{g1}, we define the morphism $\s{F}_0 \to \s{F}_1$ in the filtration. \\

Now, for every $i\in\{0,1,2\}$ we define the \emph{$i$th graded piece $\s{G}_i$} as cone of the map $\s{F}_{i-1}\to\s{F}_i$. Since $\s{F}_{-1}$ is trivial by definition, the graded piece $\s{G}_0$ agrees with $\s{F}_0$. This yields the following two exact triangles on $D^b(k_{\sm})$
\begin{align} 
	&\s{F}_{0}\to\s{F}_1\to\s{G}_1\to\s{F}_{0}[1] \label{triangle1} \\
	&\s{F}_{1}\to\s{F}_2\to\s{G}_2\to\s{F}_{1}[1]. \label{triangle2}
\end{align}
Thus, we obtain a 3-term filtration of $\s{C}_X$
\[ 0 = \s{F}_{-1}\to\s{F}_0\to\s{F}_1\to\s{F}_2=\s{C}_X, \]
whose graded pieces are the following
\begin{equation} \label{graded pieces}
	\s{G}_i\cong\begin{cases} \G_{m,k} & \text{if }i=0, \\ \Pic_{X/k}^0[-1] & \text{if }i=1, \\ \NS_{X/k}[-1] & \text{if }i=2. \end{cases}
\end{equation}

\subsection{The dual cofiltration}

Recall that $H_i(X,\Z)_\tau=H^i(k_{\sm},\s{C}_X^{\D})$ (see \eqref{ht}). Let us apply the derived Cartier dual $(-)^\D$ \eqref{derived Cartier dual} to the filtration $\s{F}_\bullet$ of $\s{C}_X$ defined in the previous section. We obtain the following cofiltration of $\s{C}_X^{\D}$
\[ \s{C}_X^{\D}=\s{F}_2^{\D}\to\s{F}_1^{\D}\to\s{F}_0^{\D}\to\s{F}_{-1}^{\D}=0. \]
Moreover, applying $(-)^\D$ to triangles \eqref{triangle1} and \eqref{triangle2}, we obtain the following two exact triangles
\begin{align}
	&\s{G}_1^{\D}\to\s{F}_1^{\D}\to\s{F}_{0}^{\D}\to\s{G}_1^{\D}[1] \label{dual triangle1} \\
	&\s{G}_2^{\D}\to\s{F}_2^{\D}\to\s{F}_{1}^{\D}\to\s{G}_2^{\D}[1] \label{dual triangle2}.
\end{align}
Since each graded piece $\s{G}_i$ is quasi-isomorphic to a complex concentrated in a single degree, we can make a direct computation of $\s{G}_i^{\D}$. Since $\G_{m,k}$ is a torus and $\NS_{X/k}$ is a finitely generated group scheme, we have that
\[ \s{G}_0^{\D}\cong (\G_{m,k})^D\cong\Z \quad \text{and} \quad  \s{G}_2^{\D}\cong (\NS_{X/k}[-1])^{\D} \cong (\NS_{X/k})^D[1]. \]
Now, the graded piece $\s{G}_1$ is quasi-isomorphic to $\Pic_{X/k}^0[-1]$. Then, applying $(-)^{\D}$ to \eqref{pic0}, we obtain that
\[ \s{G}_1^{\D} \cong (\Pic_{X/k}^0[-1])^{\D} \cong (\Pic_{X/k}^0)^{\D}[1] \cong [L^D \to A^t], \]
where the last complex is concentrated in degrees $-1$ and 0 (see Proposition \ref{dual suave}.d). In summary, the Cartier dual of the graded pieces $\s{G}_i$ are the following
\begin{equation} \label{dual graded pieces}
	\s{G}_i^{\D}\cong\begin{cases} \Z & \text{if }i=0, \\ [L^D\to A^t] & \text{if }i=1, \\ (\NS_{X/k})^D[1] & \text{if }i=2. \end{cases}
\end{equation}
Note that, unlike van Hamel's situation (see \cite[\S 2.2]{vH04}), $\s{G}_1^{\D}$ is not located at a single degree.

\subsection{A formal description of the zeroth group of truncated homology}

By composition, the sequence $\s{F}_0\to\s{F}_1\to\s{F}_2$ yields the following commutative diagram
\begin{equation} \label{cdfp}
	\begin{tikzcd}
		\s{F}_0 \ar{r} \ar[d,equal] & \s{F}_1 \ar{r} \ar{d} & \s{G}_1 \ar{r} \ar{d} & \s{F}_0[1] \ar[d,equal] \\
		\s{F}_0 \ar{r} & \s{F}_2 \ar{r} \ar{d} & \Pic_{X/k}[-1] \ar{r} \ar{d} & \s{F}_0[1] \\
		& \s{G}_2 \ar[r,equal] \ar{d} & \NS_{X/k}[-1] \ar{d} & \\
		& \s{F}_1[1] \ar{r} & \s{G}_1[1] & 
	\end{tikzcd}
\end{equation}
where all rows and columns are exact triangles. The arrow $\s{G}_1\to\Pic_{X/k}[-1]$ is the map induced by the canonical embedding $\Pic_{X/k}^0\to\Pic_{X/k}$. Then, taking Cartier dual to \eqref{cdfp}, we obtain the commutative diagram
\begin{equation} \label{dual cdfp}
	\begin{tikzcd}
		\NS_{X/k}^{\D}[1] \ar{d} \ar[r,equal] & \s{G}_2^{\D} \ar{d} & & \\ 
		\Pic_{X/k}^{\D}[1] \ar{r} \ar{d} & \s{F}_2^{\D} \ar{r} \ar{d} & \s{F}_0^{\D} \ar{r} \ar[d,equal] & \Pic_{X/k}^{\D}[2] \ar{d} \\
		\s{G}_1^{\D} \ar{r} \ar{d} & \s{F}_1^{\D} \ar{r} \ar{d} & \s{F}_0^{\D} \ar{r} & \s{G}_1^{\D}[1] \\
		\NS_{X/k}^{\D}[2] \ar[r,equal] & \s{G}_2^{\D}[1] & &
	\end{tikzcd}
\end{equation}
with all rows and columns being exact triangles. Applying the derived functor $R\Gamma(k_{\sm},-)$ to \eqref{dual cdfp}, we obtain the following commutative diagram with exact rows and columns
\begin{equation*}
	\begin{tikzcd}[column sep=small]
		& H^1(k_{\sm},\NS_{X/k}^{\D}) \ar{r}{\sim} \ar{d} & H^0(k_{\sm},\s{G}_2^{\D}) \ar{d} \\
		0 \ar{r} & H^1(k_{\sm},\Pic_{X/k}^{\D}) \ar{r} \ar{d} & H_0(X,\Z)_{\tau} \ar{r} \ar{d} & H^0(k_{\sm},\s{F}_0^{\D}) \ar{r} \ar[d,equal] & H^2(k_{\sm},\Pic_{X/k}^{\D}) \ar{d} \ar{r} & \cdots \\
		0 \ar{r} & H^0(k_{\sm},\s{G}_1^{\D}) \ar{r} \ar{d} & H^0(k_{\sm},\s{F}_1^{\D}) \ar{r} \ar{d} & H^0(k_{\sm},\s{F}_0^{\D}) \ar{r} & H^1(k_{\sm},\s{G}_1^{\D}) \ar{r} & \cdots \\
		& H^2(k_{\sm},\NS_{X/k}^{\D}) \ar{r}{\sim} & H^1(k_{\sm},\s{G}_2^{\D})
	\end{tikzcd}
\end{equation*}
where the left zeros come from $H^{-1}(k_{\sm},\s{F}_0^{\D})$ since $\s{F}_0^{\D}\cong \G_{m,k}^{D}\cong \Z$. Then, $\s{G}_1^{\D}$ is quasi-isomorphic to the generalized 1-motive $[L^D\to A^t]$, where $L$ and $A$ are defined in \eqref{pic0}. Thus we have the following commutative diagram with exact rows and columns
\begin{equation} \label{deg map 3}
	\begin{tikzcd}[column sep=small]
		& H^1(k_{\sm},\NS_{X/k}^D) \ar[r,equal] \ar{d} & H^1(k_{\sm},\NS_{X/k}^D) \ar{d} \\
		0 \ar{r} & H^1(k_{\sm},\Pic_{X/k}^{\D}) \ar{r} \ar{d} & H_0(X,\Z)_{\tau} \ar{r}{\deg} \ar{d} & \Z \ar{r} \ar[d,equal] & H^2(k_{\sm},\Pic_{X/k}^{\D}) \ar{d} \ar{r} & \cdots \\
		0 \ar{r} & H^0(k_{\sm},[L^D\to A^t]) \ar{r} \ar{d} & H^0(k_{\sm},\s{F}_1^{\D}) \ar{r} \ar{d} & \Z \ar{r} & H^1(k_{\sm},[L^D\to A^t]) \ar{r} & \cdots \\
		& H^2(k_{\sm},\NS_{X/k}^D) \ar[r,equal] & H^2(k_{\sm},\NS_{X/k}^D).
	\end{tikzcd}
\end{equation}
Using the canonical identification $H^i(k_{\sm},\s{F}^{\D})\cong\Ext_{k_{\sm}}^i(\s{F},\G_{m,k})$ for any complex of sheaves $\s{F}$, we have that $H^i(k_{\sm},\Pic_{X/k}^{\D})\cong\Ext^i_{k_{\sm}}(\Pic_{X/k},\G_{m,k})$. Then the left hand column in the diagram \eqref{deg map 3}
\[ H^1(k_{\sm},\NS_{X/k}^D) \to H^1(k_{\sm},\Pic_{X/k}^{\D}) \to H^0(k_{\sm},[L^D\to A^t]) \to H^2(k_{\sm},\NS_{X/k}^D) \]
can be regarded as the exact sequence of $\Ext$ groups
\[ \Ext_{k_{\sm}}^1(\NS_{X/k},\G_{m,k}) \to \Ext_{k_{\sm}}^1(\Pic_{X/k},\G_{m,k}) \to \Ext_{k_{\sm}}^1(\Pic^0_{X/k},\G_{m,k}) \to \Ext_{k_{\sm}}^2(\NS_{X/k},\G_{m,k}) \]
obtained from the exact sequence
\[ 0 \to \Pic_{X/k}^0 \to \Pic_{X/k} \to \NS_{X/k} \to 0. \]


\section{Generalised Tate duality} \label{section Tate}

In this chapter, we will prove that the filtration pieces $\s{F}_i$ satisfy Tate duality when $k$ is $p$-adic and $\phi:X\to\Spec(k)$ is any proper $k$-variety. We will strongly use that for the graded pieces $\s{G}_i$ there exist Tate duality results over $p$-adic fields (see \cite[Corollary 2.3]{ADT} and \cite[Theorem 0.1]{HS05}).\\

Since $k$ has characteristic zero, the algebraic group $\Pic_{X/k}^0$ is an extension of an abelian variety $A$ by a linear algebraic group $L$, which is also decomposed as the product of a torus $T$ and finitely many copies of the additive group $\G_a$. The following result tells us that, for cohomological purposes, we may work thinking that $\Pic_{X/k}^0$ is a semiabelian variety.

\begin{proposition} \label{ga}
	Let $X/k$ be a proper variety over a field $k$ of characteristic zero. Let $U$ be the unipotent part of $\Pic_{X/k}^0$. The canonical projection $\pi:\Pic_{X/k}^0\to\Pic_{X/k}^0/U$ induces the following isomorphisms of abelian groups:
	\begin{enumerate}[nosep, label=\alph*)]
	\item $H^i(k,\Pic_{X/k}^0) \cong H^i(k,\Pic_{X/k}^0/U)$ for all $i\neq0$. 
	\item $H^i(k,(\Pic_{X/k}^0)^\D) \cong H^i(k,(\Pic_{X/k}^0/U)^\D)$ for all $i$.
	\end{enumerate}
	Moreover, when $k$ is $p$-adic, the isomorphisms above are isomorphisms of topological groups. Furthermore, in this case, $H^0(k,\Pic_{X/k}^0)^\wedge$ is topologically isomorphic to $H^0(k,\Pic_{X/k}^0/U)^\wedge$.
\end{proposition}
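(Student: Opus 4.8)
The plan is to reduce both isomorphisms, and the profinite statement, to the (co)homological triviality of the unipotent part $U$ and of its Cartier dual. Since $k$ has characteristic zero, $U$ is a vector group, so $U\cong\G_a^n$ for some $n$ (and $L=T\times U$ with $T$ the toric part of $\Pic_{X/k}^0$, as in \eqref{alg group decomp}), and $\pi$ fits in the short exact sequence of commutative algebraic groups
\[ 0 \to U \to \Pic_{X/k}^0 \overset{\pi}{\to} \Pic_{X/k}^0/U \to 0, \]
whose quotient $\Pic_{X/k}^0/U$ is the semiabelian extension of $A$ by $T$.

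For a) I would take the long exact cohomology sequence of this extension. As $k$ is perfect, $H^i(k,\G_a)=0$ for all $i\geq1$, hence $H^i(k,U)=0$ for $i\geq1$; therefore, for $i\geq1$, the two neighbouring terms $H^i(k,U)$ and $H^{i+1}(k,U)$ vanish and $H^i(\pi)$ is an isomorphism, while for $i\leq-1$ all three groups vanish. This is exactly why $i=0$ must be excluded, since $H^0(k,U)=U(k)\neq0$. Over a $p$-adic field these groups are discrete for $i\geq1$ (Proposition \ref{top alg grp}.b)) and trivial for $i\leq-1$, so the group isomorphism is automatically a homeomorphism.

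For b) I would instead apply the derived Cartier dual $(-)^\D$ to the same sequence, obtaining the exact triangle
\[ (\Pic_{X/k}^0/U)^\D \overset{\pi^\D}{\to} (\Pic_{X/k}^0)^\D \to U^\D \to (\Pic_{X/k}^0/U)^\D[1]. \]
By Corollary \ref{dual suave}.a) we have $U^\D\cong U^D\cong(\G_a^D)^n$, and, $k$ being perfect, $H^i(k,\G_a^D)=0$ for every $i$; thus $H^i(k,U^\D)=0$ for all $i$ and $H^i(\pi^\D)$ is an isomorphism in every degree, with no exceptional value of $i$. To upgrade this to a topological isomorphism when $k$ is $p$-adic, I would use that by Corollary \ref{dual suave}.c) one has $(\Pic_{X/k}^0)^\D\cong[L^D\to A^t][-1]$ and $(\Pic_{X/k}^0/U)^\D\cong[T^D\to A^t][-1]$, so these cohomology groups are, by definition, the (suitably topologized) cohomology of the generalized $1$-motives $[L^D\to A^t]$ and $[T^D\to A^t]$; the comparison map is then the one induced by $L=T\times\G_a^n$ in \eqref{motives withno ga}, which is a topological isomorphism by Remark \ref{h0 motives}.b).

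Finally, for the profinite statement in degree $0$, the long exact sequence together with $H^1(k,U)=0$ gives the topological extension
\[ 0 \to U(k) \to \Pic_{X/k}^0(k) \to (\Pic_{X/k}^0/U)(k) \to 0, \]
strict exact by Proposition \ref{prop les}. The crux is that when $k$ is $p$-adic, $U(k)=k^n$ is a $\Q$-vector space, hence divisible, so it has no proper finite-index subgroup and $U(k)^\wedge=0$. Applying the right-exact functor $(-)^\wedge$ (see \cite[Proposition C.1.4]{Rosengarten}) to this extension yields the strict exact sequence
\[ U(k)^\wedge \to \Pic_{X/k}^0(k)^\wedge \to (\Pic_{X/k}^0/U)(k)^\wedge \to 0; \]
vanishing of the left term makes the middle map injective (by exactness there), hence a topological isomorphism. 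The main obstacle is not the abelian isomorphisms, which are formal consequences of the vanishing of $H^\ast(k,U)$ and $H^\ast(k,U^\D)$, but the verification that the maps are topological: in b) this rests on the fact that the cohomology of $(\Pic_{X/k}^0)^\D$ is topologized through its $1$-motive description so that \eqref{motives withno ga} is a homeomorphism, and in the profinite statement it rests on the subtle point that right-exactness of $(-)^\wedge$ upgrades to an isomorphism precisely because the divisible group $U(k)$ completes to zero.
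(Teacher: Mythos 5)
Your proof is correct and follows essentially the same route as the paper: both reduce a) and b) to the vanishing of $H^i(k,\G_a)$ for $i\neq 0$ and of $H^i(k,\G_a^D)$ for all $i$, upgrade to topological isomorphisms via discreteness of the higher cohomology groups, and handle the one non-discrete degree in b) through the generalized $1$-motive description $(\Pic_{X/k}^0)^\D\cong[L^D\to A^t][-1]$ together with Remark \ref{h0 motives}.b). Your explicit argument for the final claim --- that $U(k)\cong k^n$ is divisible, hence $U(k)^\wedge=0$, so right-exactness of profinite completion forces $\Pic_{X/k}^0(k)^\wedge\cong(\Pic_{X/k}^0/U)(k)^\wedge$ --- supplies a step the paper's proof leaves implicit.
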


\begin{proof}
	Since $k$ is of characteristic zero, $U$ is isomorphic to $\G_{a,k}^n$ for some $n\in\Z$ non-negative. Then, $a)$ follows immediately from the fact that $H^i(k,\G_a)=0$ for all $i\neq0$; and $b)$ follows from the fact that $H^i(k,\G_a^D)=0$ for all $i$ (see \cite[Proposition 2.5.1]{Rosengarten}). When $k$ is $p$-adic, every cohomology group in $a)$ and every cohomology group in $b)$, except for $i=1$, is discrete by Proposition \ref{top motives}. Whence these isomorphisms are topological isomorphisms. Recall that $(\Pic_{X/k}^0)^\D$ is the shift by $-1$ of a generalized 1-motive of the form $[L^D\to A^t]$ (see Proposition \ref{dual suave}). Hence, the proof follows from Remark \ref{h0 motives}.b.
\end{proof}

In order to prove the duality results for the filtration pieces, we will strongly use inductive arguments on the following exact sequences
\begin{equation} \label{pairing}
	\begin{tikzcd}[column sep=1.5em]
		\cdots \ar{r} & H^{r-1}(k,\s{G}_i) \ar{r} & H^{r}(k,\s{F}_{i-1}) \ar{r} & H^{r}(k,\s{F}_i) \ar{r} & H^{r}(k,\s{G}_i) \ar{r} & \cdots \\
		\cdots & H^{r+1}(k,\s{G}_i^{\D}) \ar[l] & H^{r}(k,\s{F}_{i-1}^{\D}) \ar[l] & H^{r}(k,\s{F}_i^{\D}) \ar[l] & H^{r}(k,\s{G}_i^{\D}) \ar[l] & \cdots, \ar[l]
	\end{tikzcd}
\end{equation}
for $i=1$ or 2, induced from the exact triangles \eqref{triangle1}, \eqref{triangle2}, \eqref{dual triangle1} and \eqref{dual triangle2}.


\subsection{Tate duality for the graded pieces}

In the following table, we recall the graded pieces associated to the filtration $\s{F}_\bullet$ defined in  \S\ref{fil} and their Cartier duals.
\begin{equation*}
	\begin{tabular}{|c|c|c|}
		\hline
		$i$	& $\s{G}_i$		& $\s{G}_i^{\D}$ \\
		\hline
		0	& $\G_{m,k}$		& $\Z$ \\
		1	& $\Pic^0_{X/k}[-1]$	& $[L^D\to A^t]$ \\
		2	& $\NS_{X/k}[-1]$	& $\NS_{X/k}^D[1]$ \\
		\hline
	\end{tabular}
\end{equation*}
Here, $\s{G}_1^{\D}$ is located at degrees $-1$ and $0$. The cohomology groups of the graded pieces are non-trivial in finitely cases as we recall in the following proposition.

\begin{proposition} \label{vanishing graded}
	Let $r\in\Z$.
	\begin{enumerate}[nosep, label=\alph*)]
		\item $H^r(k,\s{G}_0^{\D}) = H^{2-r}(k,\s{G}_0) = \{0\}$ for $r\notin\{0,2\}$.
		\item $H^r(k,\s{G}_i^{\D}) = H^{2-r}(k,\s{G}_i) = \{0\}$ for $i=1$ or 2 and $r\notin\{-1,0,1\}$.
	\end{enumerate}
\end{proposition}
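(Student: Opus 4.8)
The plan is to treat the three graded pieces separately and reduce every vanishing assertion to the cohomological dimension of the $p$-adic field $k$, together with a couple of specific facts that are already available. The uniform input I would record at the outset is that $k$ has strict cohomological dimension $2$, so that $H^j(k,F)=0$ for all $j\geq 3$ whenever $F$ is a locally algebraic group or a finitely generated étale group scheme (this is Proposition \ref{top alg grp}.c) and the corresponding statement for finitely generated modules), while $H^j(k,F)=0$ holds trivially for $j<0$ when $F$ sits in degree $0$. With this in hand, part a) and the $i=2$ case of part b) are pure index chases, and only the $1$-motive $\s{G}_1^{\D}$ requires a genuine long exact sequence argument.

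For part a) I would use $\s{G}_0^{\D}\cong\Z$ and $\s{G}_0\cong\G_{m,k}$. The group $H^r(k,\Z)$ vanishes for $r<0$, equals $0$ at $r=1$ since there is no nontrivial continuous homomorphism from the profinite group $\Gal(k)$ to the torsion-free discrete group $\Z$, and vanishes for $r\geq 3$ by strict cohomological dimension; symmetrically $H^{2-r}(k,\G_{m,k})$ vanishes for $2-r<0$, at $2-r=1$ by Hilbert's Theorem~90, and for $2-r\geq 3$ by cohomological dimension. Hence both vanish outside $r\in\{0,2\}$. For the $i=2$ case of b) the shifts give $H^r(k,\s{G}_2^{\D})\cong H^{r+1}(k,\NS_{X/k}^D)$ and $H^{2-r}(k,\s{G}_2)\cong H^{1-r}(k,\NS_{X/k})$; since $\NS_{X/k}$ is finitely generated étale and $\NS_{X/k}^D$ is of multiplicative type, each group vanishes as soon as its cohomological degree leaves $\{0,1,2\}$, that is, exactly for $r\notin\{-1,0,1\}$.

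The $i=1$ case of b) is the only part requiring more than a degree count, because $\s{G}_1^{\D}\cong[L^D\to A^t]$ is a generalized $1$-motive rather than a sheaf concentrated in one degree. For $\s{G}_1\cong\Pic^0_{X/k}[-1]$ the count still proceeds directly: $H^{2-r}(k,\s{G}_1)\cong H^{1-r}(k,\Pic^0_{X/k})$ vanishes for $1-r<0$ and for $1-r\geq 3$, i.e. for $r\geq 2$ and $r\leq -2$. For $\s{G}_1^{\D}$ I would invoke the long exact sequence \eqref{nagao 1-motive 1} attached to the triangle $A^t\to[L^D\to A^t]\to L^D[1]$, namely $H^r(k,A^t)\to H^r(k,\s{G}_1^{\D})\to H^{r+1}(k,L^D)$. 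For $r\geq 2$ the two outer terms vanish because $H^2(k,A^t)=0$ for the abelian variety $A^t$, $H^j(k,A^t)=0$ for $j\geq 3$, and $H^{r+1}(k,L^D)=0$ for $r+1\geq 3$ (here $L^D$ may be replaced by its finitely generated étale part $T^D$ via \eqref{motives withno ga}). For $r\leq -2$ both outer terms sit in negative degree and vanish, which also recovers the bound of Proposition \ref{top motives}.c). Thus $H^r(k,\s{G}_1^{\D})=0$ for $r\notin\{-1,0,1\}$.

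The only non-formal ingredients are the vanishing $H^2(k,A^t)=0$ for abelian varieties over $p$-adic fields and the reduction of $L^D$ to its multiplicative-type quotient; everything else is bookkeeping with strict cohomological dimension~$2$. Accordingly, I expect the sole (modest) obstacle to be keeping the degree shifts coming from the $[-1]$, $[1]$ and $1$-motive conventions consistent across the three pieces, so that the nonvanishing ranges line up precisely with $\{0,2\}$ in part a) and $\{-1,0,1\}$ in part b).
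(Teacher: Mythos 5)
Your argument is correct, and it is in substance the same as the paper's: the paper proves this proposition by simply citing \cite[Theorem I.2.1]{ADT} for the pieces $\s{G}_0$ and $\s{G}_2$ and \cite[Lemma 2.1, Theorem 2.3]{HS05} for the 1-motive piece $\s{G}_1^{\D}$, and your degree count via strict cohomological dimension $2$, Hilbert 90, $H^1(k,\Z)=0$, $H^2(k,A^t)=0$, and the triangle $A^t\to[L^D\to A^t]\to L^D[1]$ (with $L^D$ replaced by $T^D$ using \eqref{motives withno ga}) is exactly the content of those references. No gaps.
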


\begin{proof}
	For see \cite[Theorem I.2.1]{ADT} and \cite[Lemma 2.1 and Theorem 2.3]{HS05}.
\end{proof}

Moreover, the graded pieces satisfy a Tate duality result as follows.

\begin{proposition} \label{tate}
	Let $r\in\{-1,0,1,2\}$ and $i=0,1$ or 2. Then the Yoneda pairing
	\[ H^r(k,\s{G}_i^{\D}) \times H^{2-r}(k,\s{G}_i) \to H^2(k,\G_{m,k})\cong\Q/\Z \]
	is non-degenerate for every $i\in\{0,1,2\}$ and $r\in\Z$. The above pairing induces the following continuous and perfect pairings between profinite abelian groups and discrete abelian torsion groups respectively:
	\begin{enumerate}[nosep, label=\alph*)]
		\item For $i=0,1$ or 2,
		\[ H^0(k,\s{G}_i^{\D})^\wedge \times H^{2}(k,\s{G}_i) \to \Q/\Z \]
		and a similar pairing with $\s{G}_i$ and $\s{G}_i^{\D}$ interchanged. When $i=2$, both cohomology groups are finite.
		\item For $i=1$ or 2,
		\[ H^1(k,\s{G}_i)^{\wedge} \times H^1(k,\s{G}_i^{\D}) \to \Q/\Z \]
		and
		\[ H^{-1}(k,\s{G}_i^{\D})^\wedge \times H^{3}(k,\s{G}_i) \to \Q/\Z. \]
	\end{enumerate}
\end{proposition}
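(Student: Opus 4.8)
The plan is to handle the three graded pieces one at a time, identifying each pairing with an already-established local duality theorem, and then to read off the topological refinements from the descriptions of the cohomology groups in \S\ref{topologization section}. Throughout, for $r$ outside the ranges of Proposition \ref{vanishing graded} at least one of the two groups vanishes, so non-degeneracy is automatic there; only finitely many $r$ require genuine attention.

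For $i=0$ we have $\s{G}_0=\G_{m,k}$ and $\s{G}_0^\D=\Z$, so the Yoneda pairing is the classical cup product $H^r(k,\Z)\times H^{2-r}(k,\G_{m,k})\to\Q/\Z$. The only nonzero cases are $r=0$, giving $\Z\times\Q/\Z\to\Q/\Z$, which completes to the perfect pairing $\Z^\wedge\times\Q/\Z\to\Q/\Z$, and $r=2$, giving $H^2(k,\Z)\times k^\times\to\Q/\Z$, which is perfect after completing $k^\times=H^0(k,\G_{m,k})$; both are local class field theory, cited from \cite[Chapter I]{ADT}. For $i=2$, the piece $\NS_{X/k}$ is étale locally constant and finitely generated, so $\NS_{X/k}^D$ is of multiplicative type and every pairing in the statement is the Tate duality pairing for such modules, available from \cite[Theorem I.2.1]{ADT}; here the relevant groups $H^1(k,\NS_{X/k}^D)$ and $H^1(k,\NS_{X/k})$ are finite, so completion is harmless, and the remaining groups vanish by strict cohomological dimension $2$.

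The essential case is $i=1$, where $\s{G}_1=\Pic^0_{X/k}[-1]$ and $\s{G}_1^\D\cong[L^D\to A^t]$. The main obstacle is that, for singular $X$, the linear part $L$ may carry a nontrivial unipotent radical $U$, so $[L^D\to A^t]$ is a \emph{generalized} 1-motive and the duality theorems of Harari and Szamuely, which concern Deligne 1-motives, do not apply verbatim. I would first invoke Proposition \ref{ga} to replace $\Pic^0_{X/k}$ by the semiabelian quotient $\Pic^0_{X/k}/U$: this leaves each $H^i(k,\Pic^0_{X/k})$, each $H^i(k,(\Pic^0_{X/k})^\D)$ and the completion $H^0(k,\Pic^0_{X/k})^\wedge$ unchanged, and turns $\s{G}_1^\D$ into (a shift of) the genuine Deligne 1-motive $[T^D\to A^t]$ dual to $\Pic^0_{X/k}/U$. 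With this reduction in place, the non-degeneracy of the pairing and the perfection of the completed pairings in a) and b) become exactly \cite[Theorem 0.1 and Theorem 2.3]{HS05}, once the degrees are matched via $H^{2-r}(k,\Pic^0_{X/k}[-1])=H^{1-r}(k,\Pic^0_{X/k})$.

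Finally I would record the topological conclusions. For the pairing $H^{-1}(k,\s{G}_1^\D)^\wedge\times H^3(k,\s{G}_1)\to\Q/\Z$ one must a priori use Harari-Szamuely's modified completion $H^{-1}_\wedge$ from \S\ref{topologization section}; but since the semiabelian term of $[T^D\to A^t]$ is the abelian variety $A^t$, Proposition \ref{h-1} identifies $H^{-1}(k,\s{G}_1^\D)^\wedge$ with $H^{-1}_\wedge(k,\s{G}_1^\D)$, so the ordinary profinite completion suffices, as written. Using Proposition \ref{top motives} to see which groups are discrete torsion and which become profinite after completion, together with Remark \ref{h0 motives}.c for the $H^0$ of an abelian-variety target, one checks that each displayed pairing is between a profinite group and a discrete torsion group, and is continuous and perfect. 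The single real difficulty is the reduction to the Deligne case for $i=1$; everything else is bookkeeping against the cited duality and topology results.
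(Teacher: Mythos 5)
Your proposal is correct and follows essentially the same route as the paper: for $i=0,2$ it reduces to Hilbert 90 and Tate duality for finitely generated Galois modules from \cite[Theorem I.2.1]{ADT}, and for $i=1$ it uses Proposition \ref{ga} to pass to the semiabelian quotient, the Harari--Szamuely duality for Deligne 1-motives, and Proposition \ref{h-1} to replace the modified completion $H^{-1}_\wedge$ by the ordinary profinite completion. The paper's proof is just a terser version of the same argument.
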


\begin{proof}
	For $i\in\{0,2\}$, the result follows from Hilbert's Theorem 90 and Tate duality for finitely generated groups (see \cite[Theorem I.2.1]{ADT}). For $i=1$, the proposition follows from Proposition \ref{ga}, Tate duality for Deligne 1-motives (see \cite[Lemma 2.1 and Theorem 2.3]{HS05}) and Proposition \ref{h-1}.
\end{proof}

Now, recall the triangles \eqref{triangle1} and \eqref{triangle2}
\begin{align*} 
	&\s{F}_{0}\overset{\alpha_1}{\to} \s{F}_1\to\s{G}_1\to\s{F}_{0}[1]  \\
	&\s{F}_{1}\overset{\beta}{\to}\s{F}_2\to\s{G}_2\to\s{F}_{1}[1].
\end{align*}
Composing the maps $\alpha_1$ and $\beta$, we obtain a map
\[ \alpha_2 := \s{F}_{0}\overset{\alpha_1}{\to} \s{F}_1 \overset{\beta}{\to}\s{F}_2 \]
from $\s{F}_0$ to $\s{F}_2$. The maps
\begin{equation} \label{alfas}
	\alpha_1:\s{F}_0 \to \s{F}_1 \quad \text{and} \quad \alpha_2:\s{F}_0 \to \s{F}_2
\end{equation}
induces isomorphisms between some cohomology groups of the filtration pieces.

\begin{corollary} \label{tatecoro}
	Let $i=1$ or 2. 
	\begin{enumerate}[nosep, label=\alph*)]
		\item The maps $\alpha_i:\s{F}_0\to\s{F}_i$ \eqref{alfas} and $\alpha_i^\D:\s{F}_i^\D\to\s{F}_0^\D$ induce the following isomorphisms
		\[ H^0(k,\s{F}_0) \overset{\sim}{\to} H^0(k,\s{F}_i) \quad \text{and} \quad H^2(k,\s{F}_i^{\D}) \overset{\sim}{\to} H^2(k,\s{F}_0^\D). \]
		\item The map $\s{F}_1\to\s{G}_1$ and its dual map $\s{G}_1^\D\to\s{F}_1^\D$ induce the following isomorphisms
		\[ H^3(k,\s{G}_1) \overset{\sim}{\to} H^3(k,\s{F}_1) \quad \text{and} \quad H^{-1}(k,\s{F}_1^{\D}) \overset{\sim}{\to} H^{-1}(k,\s{G}_1^{\D}). \]
	\end{enumerate}
\end{corollary}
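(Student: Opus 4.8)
The plan is to read off every claimed isomorphism directly from the long exact cohomology sequences attached to the four exact triangles \eqref{triangle1}, \eqref{triangle2}, \eqref{dual triangle1} and \eqref{dual triangle2}, reducing each assertion to the vanishing of the two cohomology groups flanking the relevant term. All the needed vanishings are already available: from Proposition \ref{vanishing graded}, from the fact that cohomology in negative degree vanishes, and from $\mathrm{scd}(k)=2$ (Proposition \ref{top alg grp}.c)). The only genuine work is bookkeeping with the degree shifts $\s{G}_1\cong\Pic^0_{X/k}[-1]$, $\s{G}_2\cong\NS_{X/k}[-1]$, $\s{G}_1^\D\cong[L^D\to A^t]$, $\s{G}_2^\D\cong\NS_{X/k}^D[1]$.

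For part a) I would first treat $\alpha_1$. The triangle \eqref{triangle1} yields the exact segment
\[ H^{-1}(k,\s{G}_1) \to H^0(k,\s{F}_0) \overset{\alpha_1}{\to} H^0(k,\s{F}_1) \to H^0(k,\s{G}_1). \]
Since $\s{G}_1\cong\Pic^0_{X/k}[-1]$, both outer terms are cohomology of $\Pic^0_{X/k}$ in negative degree and hence vanish, so $\alpha_1$ is an isomorphism on $H^0$. The identical argument with \eqref{triangle2} and $\s{G}_2\cong\NS_{X/k}[-1]$ shows $\beta\colon\s{F}_1\to\s{F}_2$ is an isomorphism on $H^0$; composing gives the claim for $\alpha_2=\beta\circ\alpha_1$. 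For the dual statement I would use \eqref{dual triangle1}: the segment
\[ H^2(k,\s{G}_1^\D) \to H^2(k,\s{F}_1^\D) \overset{\alpha_1^\D}{\to} H^2(k,\s{F}_0^\D) \to H^3(k,\s{G}_1^\D) \]
has vanishing outer terms by Proposition \ref{vanishing graded}.b) (the degrees $2,3$ lie outside $\{-1,0,1\}$), so $\alpha_1^\D$ is an isomorphism on $H^2$; applying \eqref{dual triangle2} and the same vanishing to $\s{G}_2^\D\cong\NS_{X/k}^D[1]$ gives that $\beta^\D$ is an isomorphism on $H^2$, and $\alpha_2^\D=\alpha_1^\D\circ\beta^\D$ settles the case $i=2$.

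For part b) the first isomorphism comes from the tail of \eqref{triangle1},
\[ H^3(k,\s{F}_0) \to H^3(k,\s{F}_1) \to H^3(k,\s{G}_1) \to H^4(k,\s{F}_0), \]
where $\s{F}_0\cong\G_{m,k}$ forces the outer terms to vanish, as $H^r(k,\G_{m,k})=0$ for $r\geq 3$ by Proposition \ref{top alg grp}.c). Thus the natural map $H^3(k,\s{F}_1)\to H^3(k,\s{G}_1)$ induced by $\s{F}_1\to\s{G}_1$ is an isomorphism, and I record its inverse to obtain the stated direction. Dually, \eqref{dual triangle1} gives
\[ H^{-2}(k,\s{F}_0^\D) \to H^{-1}(k,\s{G}_1^\D) \to H^{-1}(k,\s{F}_1^\D) \to H^{-1}(k,\s{F}_0^\D), \]
and since $\s{F}_0^\D\cong\Z$ is concentrated in degree $0$ the two flanking groups vanish, so $\s{G}_1^\D\to\s{F}_1^\D$ is an isomorphism on $H^{-1}$ whose inverse is the asserted map.

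No step is genuinely difficult; the argument is a sequence of four applications of the same mechanism. The only points requiring care are matching each arrow of a triangle with the correct connecting map and tracking the degree shifts so that the two flanking groups land in a range where a vanishing statement applies. I would also flag explicitly that in part b) the naturally induced maps run opposite to the isomorphisms as written, so one records the inverse in each case.
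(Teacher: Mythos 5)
Your proof is correct and follows essentially the same route as the paper: both read off each isomorphism from the long exact sequences \eqref{pairing} attached to the triangles \eqref{triangle1}--\eqref{dual triangle2}, using the vanishing of $H^0(k,\s{G}_i)$ and $H^{-1}(k,\s{G}_i)$, Proposition \ref{vanishing graded}, and $\mathrm{scd}(k)=2$ for the flanking terms. Your explicit remark that the maps in part b) are the inverses of the naturally induced ones is a useful clarification that the paper leaves implicit.
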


\begin{proof}
	Point $a)$ follows directly from the exact sequences \eqref{pairing} and the fact that $H^0(k,\s{G}_1)$ and $H^0(k,\s{G}_2)$ are trivial. For $b)$, since $k$ is a local field, it has strict cohomological dimension 2. Hence, the groups 
	\[ H^{2-r}(k,\s{F}_0)=H^{2-r}(k,\G_{m,k}), \quad H^r(k,\s{F}_0^{\D})=H^r(k,\Z) \] 
	are trivial for $r\notin\{0,1,2\}$. Thus, by the exactness of the sequences \eqref{pairing}, we have the isomorphisms
	\[ H^{3}(k,\s{F}_1)\cong H^3(k,\s{G}_1) \quad \text{and} \quad H^{-1}(k,\s{F}_1^{\D})\cong H^{-1}(k,\s{G}_1^{\D}). \]
\end{proof}

\begin{corollary} \label{vanish}
	Let $r\in\Z$.
	\begin{enumerate}[nosep, label=\alph*)]
		\item $H^{2-r}(k,\s{F}_0) = H^r(k,\s{F}_0^{\D}) = \{0\}$ for $r\notin\{0,2\}$.
		\item $H^{2-r}(k,\s{F}_i) = H^r(k,\s{F}_i^{\D}) = \{0\}$ for $i=1$ or 2 and $r\notin\{-1,0,1,2\}$.
	\end{enumerate}
\end{corollary}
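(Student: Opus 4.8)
The plan is to reduce everything to the vanishing of the cohomology of the graded pieces (Proposition~\ref{vanishing graded}), propagated through the long exact sequences \eqref{pairing} attached to the triangles \eqref{triangle1}, \eqref{triangle2}, \eqref{dual triangle1} and \eqref{dual triangle2}.

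Part a) needs no argument: since $\s{F}_{-1}=0$, the cone defining $\s{G}_0$ gives $\s{G}_0=\s{F}_0$, hence $\s{G}_0^{\D}=\s{F}_0^{\D}$, so a) is precisely Proposition~\ref{vanishing graded}.a). Concretely, $\s{F}_0\cong\G_{m,k}$ and $\s{F}_0^{\D}\cong\Z$ sit in degree $0$, and over a $p$-adic field $H^m(k,\G_{m,k})$ and $H^m(k,\Z)$ vanish for $m\notin\{0,2\}$.

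For b) I would induct on $i\in\{1,2\}$, treating $\s{F}_i$ and $\s{F}_i^{\D}$ in parallel. First I rewrite the target ranges: putting $m=2-r$, the vanishing $H^{2-r}(k,\s{F}_i)=0$ for $r\notin\{-1,0,1,2\}$ becomes $H^m(k,\s{F}_i)=0$ for $m\leq-1$ or $m\geq 4$, while $H^r(k,\s{F}_i^{\D})=0$ for $r\notin\{-1,0,1,2\}$ means $r\leq-2$ or $r\geq 3$. The inputs are: by Proposition~\ref{vanishing graded}.b), one has $H^m(k,\s{G}_i)=0$ for $m\leq 0$ or $m\geq 4$ and $H^r(k,\s{G}_i^{\D})=0$ for $r\leq-2$ or $r\geq 2$; and, as the base case, part a) gives $H^m(k,\s{F}_0)=0=H^m(k,\s{F}_0^{\D})$ for $m\notin\{0,2\}$.

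The inductive step for $\s{F}_i$ uses the three-term exact piece $H^m(k,\s{F}_{i-1})\to H^m(k,\s{F}_i)\to H^m(k,\s{G}_i)$ of \eqref{pairing}: for $m\geq 4$ or $m\leq -1$ the right-hand term vanishes by the graded-piece bound, and the left-hand term vanishes by the inductive hypothesis (or by part a) when $i=1$), forcing the middle term to vanish. Dually, the piece $H^r(k,\s{G}_i^{\D})\to H^r(k,\s{F}_i^{\D})\to H^r(k,\s{F}_{i-1}^{\D})$ of \eqref{pairing} forces $H^r(k,\s{F}_i^{\D})=0$ for $r\geq 3$ or $r\leq -2$, since both outer terms vanish there. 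Running this for $i=1$ over the base $\s{F}_0$ and then for $i=2$ over $\s{F}_1$ (just obtained) yields b). There is essentially no obstacle beyond bookkeeping; the one point to verify with care is that the degree ranges in which $\s{G}_i$, $\s{G}_i^{\D}$ and the base terms $\s{F}_0$, $\s{F}_0^{\D}$ are known to vanish jointly exhaust the complement of $\{-1,0,1,2\}$, so that at each relevant degree both flanking terms of the three-term sequence are simultaneously zero. The ranges recorded above confirm this, and the induction closes.
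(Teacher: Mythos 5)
Your proof is correct and follows essentially the same route as the paper: identify $\s{F}_0=\s{G}_0=\G_{m,k}$ for part a), then propagate the vanishing of Proposition~\ref{vanishing graded} through the three-term exact sequences coming from the triangles \eqref{triangle1}--\eqref{dual triangle2}, inductively in $i$. Your degree bookkeeping checks out, and the only cosmetic difference is that the paper also cites Corollary~\ref{tatecoro}.a to pin down the behaviour at $r=2$, which is not needed for the stated range $r\notin\{-1,0,1,2\}$.
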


\begin{proof}
	For $a)$, we note that $\G_{m,k}=\s{F}_0=\s{G}_0$. For $b)$, we use the exact sequences
	\[ H^{2-r}(k,\s{F}_{i-1}) \to H^{2-r}(k,\s{F}_i) \to H^{2-r}(k,\s{G}_i) \quad \text{and} \quad H^{r}(k,\s{F}_{i-1}^{\D}) \to H^{r}(k,\s{F}_i^{\D}) \to H^{r}(k,\s{G}_i^{\D}) \]
	and Proposition \ref{vanishing graded} to conclude that the only value for $r$ away from $\{-1,0,1\}$ for which the groups $H^{2-r}(k,\s{F}_1)$ and $H^r(k,\s{F}_1^{\D})$ could be non trivial is $r=2$. Whence we conclude using the point $a)$ of Corollary \ref{tatecoro}.
\end{proof}


\subsection{Topology on the cohomology of the filtration pieces} \label{topology filtration}

In order to get a duality result for the filtration pieces $\s{F}_i$ of $\s{C}_X=\tau_{\leq 1}R\phi_*\G_{m,X}$, it is necessary to equip its cohomology groups with a suitable topology. Recall that the graded pieces $\s{G}_i$ are, up to a shift, sheaves represented by a smooth algebraic group (see \eqref{graded pieces}). We summarize the topology on the cohomology groups of these pieces in the following table:
\begin{equation*}
	\begin{tabular}{|c|c|c|c|}
		\hline
		& $\s{G}_0$		& $\s{G}_1$ & $\s{G}_2$ \\
		\hline
		$H^0$		& $k^*$	& $\{0\}$ & $\{0\}$ \\
		$H^1$		& $\{0\}$ & $\Pic_{X/k}^0(k)$ & discrete \\
		$H^2$		& discrete & discrete & finite discrete \\
		$H^3$		& $\{0\}$ & discrete & discrete \\
		\hline
	\end{tabular}
\end{equation*}
Since $\s{F}_0=\s{G}_0=\G_{m,k}$, we equip $H^i(k,\s{F}_0)$ with the same topology than $H^i(k,\G_{m,k})$. Now, the exact triangles \eqref{triangle1} and \eqref{triangle2} induce the following exact sequences of abelian groups
\begin{align} 
	H^n(k,\s{F}_{0}) \to H^n(k,\s{F}_1) \to H^n(k,\s{G}_1) \label{top seq1} \\
	H^n(k,\s{F}_{1}) \to H^n(k,\s{F}_2) \to H^n(k,\s{G}_2) \label{top seq2}
\end{align}
We will show that there exists a natural way to topologize the groups $H^n(k,\s{F}_i)$ such that \eqref{top seq1} and \eqref{top seq2} become strict exact sequences of abelian topological groups. We only have to topologize the groups $H^n(k,\s{F}_i)$ for $i=1$ or 2 and $n\in\{0,1,2,3\}$ since, by Corollary \ref{vanish}, for $n\notin\{0,1,2,3\}$ all of these groups are trivial. \\

\fbox{$i=1$} By the isomorphism $H^0(k,\s{F}_0) \overset{\sim}{\to} H^0(k,\s{F}_1)$ in Corollary \ref{tatecoro}, we equip $H^0(k,\s{F}_1)$ with the topology induced, via this isomorphism, from $H^0(k,\s{F}_0)=k^*$. For $n>1$, the groups $H^n(k,\s{F}_0) = H^n(k,\G_{m,k})$ and $H^n(k,\s{G}_1)\cong H^{n-1}(k,\Pic_{X/k}^0)$ have the discrete topology. As $H^n(k,\s{F}_1)$ is the middle term of the exact sequence \eqref{top seq1}
\[ H^n(k,\s{F}_{0}) \to H^n(k,\s{F}_1) \to H^n(k,\s{G}_1), \]
it will be naturally equipped with the discrete topology for $n>1$. For $n=1$, the sequence \eqref{top seq1} becomes
\[ H^1(k,\s{F}_{0}) \to H^1(k,\s{F}_1) \to H^1(k,\s{G}_1), \]
where $H^1(k,\s{F}_{0})=H^1(k,\G_{m,k})$ and $H^1(k,\s{G}_1)\cong \Pic_{X/k}^0(k)$. By Hilbert's 90 theorem, $H^1(k,\s{F}_1)$ is naturally equipped with the subspace topology from $\Pic_{X/k}^0(k)$. \\

\fbox{$i=2$} By the isomorphism $H^0(k,\s{F}_0) \overset{\sim}{\to} H^0(k,\s{F}_2)$ in Corollary \ref{tatecoro}, we equip $H^0(k,\s{F}_1)$ with the topology induced, via this isomorphism, from $H^0(k,\s{F}_0)=k^*$. Now, the groups $H^n(k,\s{F}_1)$ and $H^n(k,\s{G}_2)\cong H^{n-1}(k,\NS_{X/k})$ have the discrete topology for $n>1$. As $H^n(k,\s{F}_2)$ is the middle term of the exact sequence \eqref{top seq2}
\[ H^n(k,\s{F}_{1}) \to H^n(k,\s{F}_2) \to H^n(k,\s{G}_2), \]
it will be equipped with the discrete topology for $n>1$. For $n=1$, the sequence \eqref{top seq2} becomes
\[ 0 \to H^1(k,\s{F}_{1}) \to H^1(k,\s{F}_2) \to H^1(k,\s{G}_2), \]
where the zero at left comes is $H^0(k,\s{G}_2)\cong H^{-1}(k,\NS_{X/k})=0$. Since $H^1(k,\s{G}_2)\cong H^{0}(k,\NS_{X/k})$ has the discrete topology, there exists a unique topology on $H^1(k,\s{F}_2)$ such that the sequence above becomes strict exact (see \cite[Proposition 4.1, Corollary 2]{Calabi}). \\

In this way, we conclude the following proposition.

\begin{proposition} \label{top fil} 
	For every $i\in\{0,1,2\}$ and $n\in\Z$, there exists a unique topology on $H^n(k,\s{F}_i)$ such that the sequence
	\[ H^n(k,\s{F}_{i-1}) \to H^n(k,\s{F}_i) \to H^n(k,\s{G}_i), \]
	is a strict exact sequence of first countable abelian topological groups. More precisely,
	\begin{enumerate}[nosep, label=\alph*)]
		\item $H^0(k,\s{F}_i)$ is topologically isomorphic to $k^*$;
		\item $H^1(k,\s{F}_0)$ is trivial; $H^1(k,\s{F}_1)$ is a subspace of $\Pic_{X/k}^0(k)$; and $H^1(k,\s{F}_2)$ is an extension of a discrete group by $H^1(k,\s{F}_1)$;
		\item $H^n(k,\s{F}_i)$ are discrete for $n=2,3$. The group $H^3(k,\s{F}_0)$ is trivial; and 
		\item $H^n(k,\s{F}_i)$ is trivial for $n\notin\{0,1,2,3\}$.
	\end{enumerate}
	Thus, $H^n(k,\s{F}_i)$ is Hausdorff, locally compact, totally disconnected and second countable. 
\end{proposition}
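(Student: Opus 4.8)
The whole statement is essentially a bookkeeping consequence of the topologisations carried out just above, so the plan is to organise the cases, settle uniqueness, and then check the five topological adjectives. By Corollary~\ref{vanish} every group $H^n(k,\s{F}_i)$ with $n\notin\{0,1,2,3\}$ vanishes, which already gives (d) and reduces everything to the finitely many pairs $(i,n)$ with $i\in\{0,1,2\}$ and $n\in\{0,1,2,3\}$. For $i=0$ there is nothing to construct: $\s{F}_0=\s{G}_0=\G_{m,k}$, and I would transport the usual topology on $H^n(k,\G_{m,k})$, namely $k^*,\ 0,\ \Br(k)\cong\Q/\Z,\ 0$ in degrees $0,1,2,3$.

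For $i=1,2$ I would establish existence, strict exactness and uniqueness degree by degree. In degree $0$, Corollary~\ref{tatecoro}.a) gives $H^0(k,\s{F}_0)\overset{\sim}{\to}H^0(k,\s{F}_i)$; since $H^0(k,\s{G}_i)=0$ by Proposition~\ref{vanishing graded}, pulling back the topology of $k^*$ makes \eqref{top seq1} and \eqref{top seq2} strict exact and forces (a). For $(n,i)=(1,1)$, Hilbert~90 gives $H^1(k,\s{F}_0)=0$, so \eqref{top seq1} reduces to an injection $H^1(k,\s{F}_1)\hookrightarrow H^1(k,\s{G}_1)\cong\Pic^0_{X/k}(k)$; the subspace topology is the unique one making this a topological embedding, hence strict, which is the first half of (b). For $(n,i)=(1,2)$ the sequence \eqref{top seq2} becomes a topological extension $0\to H^1(k,\s{F}_1)\to H^1(k,\s{F}_2)\to H^1(k,\s{G}_2)$ whose quotient part $H^1(k,\s{G}_2)\cong H^0(k,\NS_{X/k})$ is discrete, so \cite[Proposition~4.1, Corollary~2]{Calabi} supplies a unique strict-exact topology and exhibits the extension asserted in (b). Finally, for $n\in\{2,3\}$ the two outer terms of \eqref{top seq1} and \eqref{top seq2} are discrete by the table of \S\ref{topology filtration} and Proposition~\ref{vanishing graded}, and \cite{Calabi} again forces the middle term to be discrete, which is (c).

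It then remains to verify the five topological properties. The multiplicative group $k^*$ of a $p$-adic field is Hausdorff, locally compact, totally disconnected, second countable and compactly generated (it is $\cong\Z\times\O_k^\times$ with $\O_k^\times$ profinite), which covers $\s{F}_0$ and every degree-$0$ term. The discrete terms in degrees $2$ and $3$ carry all five properties at once, being countable, as they are subquotients of Galois cohomology groups that are finitely generated or torsion of cofinite type. Through the extension in (b), $H^1(k,\s{F}_2)$ inherits each property from $H^1(k,\s{F}_1)$ together with its discrete quotient, so the crux is the single group $H^1(k,\s{F}_1)$.

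The step I expect to be the main obstacle is precisely to upgrade the subspace topology on $H^1(k,\s{F}_1)$ to local compactness and compact generation. By the triangle \eqref{triangle1}, $H^1(k,\s{F}_1)$ is the kernel of the connecting homomorphism $\partial\colon H^1(k,\s{G}_1)\cong\Pic^0_{X/k}(k)\to H^2(k,\s{F}_0)=\Br(k)\cong\Q/\Z$. Because $\Br(k)$ is discrete, as soon as $\partial$ is continuous its kernel is clopen in $\Pic^0_{X/k}(k)$; since $\Pic^0_{X/k}(k)$ is Hausdorff, locally compact, totally disconnected and second countable by Proposition~\ref{top alg grp}.a), the clopen subgroup $H^1(k,\s{F}_1)$ inherits all four of these, and then so does $H^1(k,\s{F}_2)$. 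I would therefore concentrate the effort on the continuity of $\partial$, which I expect to extract from the functoriality of the assigned topologies along the long exact cohomology sequences, in the spirit of \cite[Proposition~4.2]{C15} and Proposition~\ref{prop les}. Compact generation is the most delicate adjective, because the unipotent part of $\Pic^0_{X/k}$ prevents $\Pic^0_{X/k}(k)$ from being compactly generated in general; here I would argue directly from the structure \eqref{pic0}, reducing the compact generation of the clopen subgroup to that of the torus and abelian parts and using that closed subgroups of compactly generated locally compact abelian groups remain compactly generated.
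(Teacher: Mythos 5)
Your construction of the topologies and your uniqueness argument coincide with the paper's: the paper likewise transports the topology of $k^*$ in degree $0$ via Corollary~\ref{tatecoro}.a), puts the subspace topology from $\Pic_{X/k}^0(k)$ on $H^1(k,\s{F}_1)$ using Hilbert~90, invokes \cite[Proposition 4.1, Corollary 2]{Calabi} for $H^1(k,\s{F}_2)$, and declares the degree $2,3$ groups discrete; its proof of the final list of topological adjectives is then a bare citation of \cite[p.~58, Theorem 6.15]{Stroppel}. Where you go further is in verifying those adjectives, and this is where your argument has genuine problems. First, a countable discrete group is second countable, but it is compactly generated only if it is finitely generated (compact subsets of a discrete space are finite), so your claim that the discrete terms ``carry all five properties at once, being countable'' already fails for $H^2(k,\s{F}_0)=\Br(k)\cong\Q/\Z$. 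Second, your plan for the compact generation of $H^1(k,\s{F}_1)$ is self-defeating: when $\Pic_{X/k}^0$ has a non-trivial unipotent part $\G_a^n$ (e.g.\ $X$ a cuspidal cubic, where $\Pic_{X/k}^0\cong\G_a$), the boundary map $\partial$ kills the divisible subgroup $k^n\subseteq\Pic_{X/k}^0(k)$, since $\im\partial$ lies in the finite group $\ker(\Br(k)\to\Br(X))$ (this is the argument of Lemma~\ref{boundary}); hence $\ker\partial$ contains $k^n$ as a closed subgroup, $(k,+)$ is not compactly generated, and the very fact you invoke --- that closed subgroups of compactly generated LCA groups are compactly generated --- then shows $\ker\partial$ cannot be compactly generated. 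No reduction ``to the torus and abelian parts'' can repair this; it appears to be a defect of the statement as printed rather than something your proof could fix, and the paper's one-line citation does not address it either.

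A smaller point: for local compactness you reduce to showing $\ker\partial$ is clopen, which requires the continuity of $\partial\colon\Pic_{X/k}^0(k)\to\Br(k)$. The paper does not obtain this from functoriality in the style of \cite[Proposition 4.2]{C15} --- the triangle $\s{F}_0\to\s{F}_1\to\s{G}_1$ is not the long exact sequence of a short exact sequence of group schemes, so that result does not apply directly --- but rather from the finiteness of $\im\partial$ combined with the fact that finite-index subgroups of $\Pic_{X/k}^0(k)$ are open; both ingredients appear only later, in Lemma~\ref{boundary} and Corollary~\ref{boundary cont}. You would need to import that finite-image argument (which is independent of the present proposition) to close this step of your verification.
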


\begin{proof}
	The topological properties of $H^n(k,\s{F}_i)$ follow from the previous construction and \cite[p. 58, Theorem 6.15]{Stroppel} once we prove that $H^1(k,\s{F}_1)$ verifies such properties. Note that we just have to verify local compacity on $H^1(k,\s{F}_1)$. In order to do that, it is sufficient to prove that $H^1(k,\s{F}_1)$ is open in $\Pic_{X/k}^0(k)$, which is true by Lemma \ref{boundary}.
\end{proof}

Similarly, we will treat the case of the Cartier dual of the filtration pieces. Recall that the dual graded pieces $\s{G}_i^{\D}$ are, up to a shift, either sheaves represented by a smooth algebraic group or a generalized 1-motive (see \eqref{dual graded pieces}). In the following table, we summarize the topology on the cohomology groups of the dual graded pieces:
\begin{equation*} 
	\begin{tabular}{|c|c|c|c|}
		\hline
		& $\s{G}_0^{\D}$		& $\s{G}_1^{\D}$ & $\s{G}_2^{\D}$ \\
		\hline
		$H^{-1}$	& $\{0\}$ & discrete & $\NS_{X/k}^D(k)$ \\
		$H^0$		& discrete	& $H^0(k,[T^D\to A^t])$ & finite discrete \\
		$H^1$		& $\{0\}$ & discrete & discrete \\
		$H^2$		& discrete & $\{0\}$ & $\{0\}$ \\
		\hline
	\end{tabular}
\end{equation*}
Since $\s{F}_0^\D=\s{G}_0^\D=\G_{m,k}^\D\cong\Z$, we equip $H^n(k,\s{F}_0^\D)$ with the discrete topology for all $n$. As we have done earlier, we will use the exact sequences
\begin{align} 
	H^n(k,\s{G}_1^{\D}) \to H^n(k,\s{F}_1^{\D}) \to H^n(k,\s{F}_{0}^{\D}) \label{dual top seq1} \\
	H^n(k,\s{G}_2^{\D}) \to H^n(k,\s{F}_2^{\D}) \to H^n(k,\s{F}_{1}^{\D}) \label{dual top seq2}
\end{align}
induced by the exact triangles \eqref{dual triangle1} and \eqref{dual triangle2}, to define a topology on $H^n(k,\s{F}_i^{\D})$. By Corollary \ref{vanish}, $H^n(k,\s{F}_i^{\D})$ is trivial for $i\in\{1,2\}$ and $n\notin\{-1,0,1,2\}$. Thus, \\

\fbox{$i=1$} By the isomorphism $H^2(k,\s{F}_1^{\D}) \overset{\sim}{\to} H^2(k,\s{F}_0^{\D})$ in Corollary \ref{tatecoro}, we equip $H^2(k,\s{F}_1^{\D})$ with the discrete topology. Now, the group $H^n(k,\s{F}_1^{\D})$ is the middle term of the exact sequence \eqref{dual top seq1}
\[ H^{n}(k,[T^D\to A^t]) \to H^n(k,\s{F}_1^{\D}) \to H^n(k,\Z). \]
Since $H^n(k,\Z)$ has the discrete topology for every $n$, there exists a unique topology on $H^n(k,\s{F}_1^{\D})$ such that the sequence above is strict exact. In particular, $H^n(k,\s{F}_1^{\D})$ will have discrete topology when $n=-1$ or 1 since, in these cases, $H^{n}(k,[T^D\to A^t])$ is also discrete. \\

\fbox{$i=2$} By the isomorphism $H^2(k,\s{F}_2^{\D}) \overset{\sim}{\to} H^2(k,\s{F}_0^{\D})$ in Corollary \ref{tatecoro}, we equip $H^2(k,\s{F}_1^{\D})$ with the discrete topology. Now, $H^n(k,\s{F}_2^{\D})$ is the middle term of the exact sequence \eqref{dual top seq2}
\[ H^{n+1}(k,\NS_{X/k}^D) \to H^n(k,\s{F}_2^{\D}) \to H^n(k,\s{F}_{1}^{\D}). \]
Note that for $n=-1$ or 1, the right side term has the discrete topology. Then there exists a unique topology on $H^n(k,\s{F}_2^{\D})$ such that the sequence above becomes strict exact for $n=-1$ or 1. For $n=0$, the situation is different and more delicate. The following exact triangle (see \eqref{cdfp})
\[ \s{F}_0 \to \s{F}_2 \to \Pic_{X/k}[-1] \to \s{F}_0[1] \]
yields the following exact sequence of abelian groups
\[ 0 \to H^1(k,\Pic_{X/k}^{\D}) \to H^0(k,\s{F}_2^{\D}) \to \Z. \]
Then, since $\Z$ is discrete, there exists a unique topology on $H^0(k,\s{F}_2^{\D})$ such that the exact sequence above becomes a strict exact sequence of abelian topological groups. Thus, the rows of the following commutative diagram of abelian groups
\begin{equation*}
	\begin{tikzcd}
		0 \ar{r} & H^1(k_{\sm},\Pic_{X/k}^{\D}) \ar{r} \ar{d}{f} & H^0(k,\s{F}_2^{\D}) \ar{r}{\deg} \ar{d}{g} & \Z \ar[d,equal] \\
		0 \ar{r} & H^0(k_{\sm},[T^D\to A^t]) \ar{r} & H^0(k_{\sm},\s{F}_1^{\D}) \ar{r} & \Z
	\end{tikzcd}
\end{equation*}
(see diagram \eqref{deg map 3}) are strict exact sequences of abelian topological groups. By Proposition \ref{top loc alg}, $f$ is continuous and, therefore, $g$ is continuous since $\Z$ is discrete (see \cite[Corollary 3.18]{topart}). Then, applying $(-)_{\Haus}$ to the diagram above yields the following commutative diagram of abelian groups
\begin{equation*}
	\begin{tikzcd}
		0 \ar{r} & H^1(k_{\sm},\Pic_{X/k}^{\D})_{\Haus} \ar{r} \ar{d}{f_{\Haus}} & H^0(k,\s{F}_2^{\D})_{\Haus} \ar{r}{\deg} \ar{d}{g_{\Haus}} & \Z \ar[d,equal] \\
		0 \ar{r} & H^0(k_{\sm},[T^D\to A^t])_{\Haus} \ar{r} & H^0(k_{\sm},\s{F}_1^{\D})_{\Haus} \ar{r} & \Z,
	\end{tikzcd}
\end{equation*}
whose rows are strict exact (see \cite[Lemma 2.13]{topart}) and $f_{\Haus}$ is strict continuous. Hence, by Corollary 3.18 in \cite{topart}, $g_{\Haus}$ is a strict continuous homomorphism of abelian topological groups. Thus,
\[ H^{1}(k,\NS_{X/k}^D) \to H^0(k,\s{F}_2^{\D})_{\Haus} \to H^0(k,\s{F}_{1}^{\D})_{\Haus}. \]
is a strict exact sequence of abelian topological groups. Further, by Proposition 3.20 in \cite{topart}, any pair of topologies on $H^0(k,\s{F}_2^{\D})$ satisfying that the sequence 
\[ H^{1}(k,\NS_{X/k}^D) \to H^0(k,\s{F}_2^{\D}) \to H^0(k,\s{F}_{1}^{\D}). \]
is strict exact, have isomorphic Hausdorff quotients. \\

Similar to Proposition \ref{top fil}, we obtain the following result.

\begin{proposition} \label{top dual fil} 
	For every $i\in\{0,1,2\}$ and $n\in\Z$, there exists a topology on $H^n(k,\s{F}_i^{\D})$ such that the sequence
	\[ H^n(k,\s{G}_{i}^{\D}) \to H^n(k,\s{F}_i^{\D}) \to H^n(k,\s{F}_{i-1}^{\D}), \]
	is a strict exact sequence of first countable abelian topological groups. These topologies are unique except possibly for $H^0(k,\s{F}_2^{\D})$, in which case the separation $H^0(k,\s{F}_2^{\D})_{\Haus}$ of $H^0(k,\s{F}_2^{\D})$ is uniquely determined. More precisely,
	\begin{enumerate}[nosep, label=\alph*)]
		\item $H^{-1}(k,\s{F}^{\D}_i)$ has discrete topology and for $i=0$ this group is trivial;
		\item $H^{0}(k,\s{F}^{\D}_0)$ has the discrete topology; the group $H^{0}(k,\s{F}^{\D}_1)$ is an extension of a discrete group by the (possibly non-Hausdorff) group $H^0(k,[T^D\to A^t])$; and the group $H^{0}(k,\s{F}^{\D}_2)$ is and extension of a subgroup of $H^{0}(k,\s{F}^{\D}_1)$ by a discrete finite group.
		\item $H^{n}(k,\s{F}^{\D}_i)$ has discrete topology for every $n\in\{1,2\}$. The group $H^{1}(k,\s{F}^{\D}_0)$ is trivial; and
		\item $H^{n}(k,\s{F}^{\D}_i)$ is trivial for $n\notin\{-1,0,1,2\}$.
	\end{enumerate}
	Thus, $H^n(k,\s{F}_i^{\D})$ is locally compact; and it is Hausdorff, except possibly $H^{0}(k,\s{F}_1^{\D})$ and $H^{0}(k,\s{F}_2^{\D})$.
\end{proposition}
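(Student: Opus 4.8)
The plan is to follow the degreewise construction carried out just above the statement and, exactly as in the proof of Proposition~\ref{top fil}, to read the asserted topological properties off the defining short exact sequences \eqref{dual top seq1} and \eqref{dual top seq2}. I would organize everything around a single observation: the quotient term of the relevant sequence, $H^n(k,\s{F}_{i-1}^{\D})$, is discrete in every bidegree \emph{except} $(i,n)=(2,0)$, and this is exactly what separates the routine cases from the one delicate one.

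For existence and uniqueness away from $(2,0)$ I would invoke \cite[Proposition 4.1, Corollary 2]{Calabi}: whenever the quotient term $H^n(k,\s{F}_{i-1}^{\D})$ is discrete, that result produces a unique topology on the middle making the three-term sequence strict exact. Since $\s{F}_0^{\D}\cong\Z$ forces $H^n(k,\s{F}_0^{\D})$ discrete in every degree, and since the $i=1$ step in turn renders $H^n(k,\s{F}_1^{\D})$ discrete for every $n\neq0$ (here Corollary~\ref{vanish} bounds the range and Corollary~\ref{tatecoro} handles $n=2$), the quotient term is discrete in all bidegrees but $(i,n)=(2,0)$. The explicit shapes (a)--(d) then fall out by transcription: discreteness propagates whenever the sub $H^n(k,\s{G}_i^{\D})$ is also discrete, and the only non-discrete input is $H^0(k,[T^D\to A^t])$ in degree~$0$, which is possibly non-Hausdorff by Remark~\ref{h0 motives}; this yields the stated extension descriptions for $H^0(k,\s{F}_1^{\D})$ and $H^0(k,\s{F}_2^{\D})$.

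The crux is the bidegree $(2,0)$, where the quotient $H^0(k,\s{F}_1^{\D})$ is not discrete, so \cite{Calabi} no longer pins down the middle term. Here I would instead construct the topology from the second triangle of \eqref{cdfp}, which presents $H^0(k,\s{F}_2^{\D})$ as an extension $0\to H^1(k_{\sm},\Pic_{X/k}^{\D})\to H^0(k,\s{F}_2^{\D})\to\Z$ with \emph{discrete} quotient, again applying \cite{Calabi}. To recover canonicity of the separation, I would compare this presentation with the one coming from \eqref{dual top seq2} through the commutative comparison diagram of the construction (cf.\ \eqref{deg map 3}): the induced map $H^1(k_{\sm},\Pic_{X/k}^{\D})\to H^0(k,[T^D\to A^t])$ is strict continuous by Proposition~\ref{top loc alg}, hence the middle vertical map is continuous by \cite[Corollary 3.18]{topart}, and after applying $(-)_{\Haus}$ the rows remain strict exact; \cite[Proposition 3.20]{topart} then forces any two topologies making \eqref{dual top seq2} strict exact to have canonically isomorphic Hausdorff quotients. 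I expect this to be the main obstacle, since it is the one place where a soft ``unique middle topology'' argument is unavailable and one must genuinely compare two distinct presentations via the Hausdorffification machinery of \cite{topart}.

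Finally, for the blanket topological properties I would argue as in Proposition~\ref{top fil}: each $H^n(k,\s{F}_i^{\D})$ is obtained by finitely many extensions from groups that are locally compact, totally disconnected, first countable and compactly generated---namely discrete finitely generated groups and the motive cohomology groups controlled by Proposition~\ref{top motives}---and these properties are stable under the extensions in play by \cite[p. 58, Theorem 6.15]{Stroppel}. Hausdorffness propagates identically, failing only when the possibly non-Hausdorff group $H^0(k,[T^D\to A^t])$ enters, i.e.\ for $H^0(k,\s{F}_1^{\D})$ and, through the $(2,0)$ extension built upon it, for $H^0(k,\s{F}_2^{\D})$, which matches the claimed exceptions.
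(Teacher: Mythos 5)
Your proposal reproduces the paper's argument essentially step for step: the same degreewise construction via the sequences \eqref{dual top seq1}--\eqref{dual top seq2} with existence/uniqueness from \cite[Proposition 4.1, Corollary 2]{Calabi} whenever the quotient term is discrete, the same identification of bidegree $(2,0)$ as the delicate case resolved by the alternative presentation $0\to H^1(k_{\sm},\Pic_{X/k}^{\D})\to H^0(k,\s{F}_2^{\D})\to\Z$ together with Proposition~\ref{top loc alg} and the Hausdorffification results of \cite{topart}, and the same appeal to \cite[p.~58, Theorem 6.15]{Stroppel} for the blanket topological properties. The one imprecision you share with the paper's own statement is the claim that the only non-discrete input besides degree $0$ is $H^0(k,[T^D\to A^t])$, whereas $H^{-1}(k,\s{G}_2^{\D})=\NS_{X/k}^D(k)$ also enters at $(i,n)=(2,-1)$ and need not be discrete; but this is a feature of the proposition as written rather than a defect of your reconstruction.
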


\begin{corollary} \label{top dual fil haus}
	We have the following topological extensions of Hausdorff locally compact abelian topological groups
	\begin{gather*}
		0 \to H^0(k,[T^D\to A^t])_{\Haus} \to H^0(k,\s{F}_1^{\D})_{\Haus} \to \im(H^0(k,\s{F}_1^{\D}) \to H^0(k,\s{F}_0^{\D})) \to 0 \\
		0 \to H^1(k,\NS_{X/k}^D) \to H^0(k,\s{F}_2^{\D})_{\Haus} \to \im(H^0(k,\s{F}_2^{\D}) \to H^0(k,\s{F}_1^{\D}))_{\Haus} \to 0.
	\end{gather*}
	In particular, for every $i\in\{0,1,2\}$ and $n\in\Z$, $H^n(k,\s{F}_i^{\D})_{\Haus}$ is Hausdorff, locally compact, totally disconnected, second countable and compactly generated. 
\end{corollary}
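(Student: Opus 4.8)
The plan is to extract both topological extensions directly from Proposition~\ref{top dual fil} and the strict exact sequences built in its proof, and then to deduce the structural statement from the stability of the relevant class of locally compact groups under extensions. The only groups that are not yet Hausdorff are $H^0(k,\s{F}_1^{\D})$ and $H^0(k,\s{F}_2^{\D})$ (Proposition~\ref{top dual fil}), so the work concentrates at $n=0$, $i=1,2$. The two technical tools I will use repeatedly are that the separation functor $(-)_{\Haus}$ sends a strict exact sequence of locally compact groups to a strict exact one (\cite[Lemma~2.13]{topart}), and that a strict monomorphism remains a monomorphism after separation, being a topological embedding.

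For the first extension, apply the cohomology sequence of the triangle \eqref{dual triangle1}. Since $H^{-1}(k,\s{F}_0^{\D})=0$ by Proposition~\ref{top dual fil}.a), the three-term sequence is
\[ 0 \to H^0(k,\s{G}_1^{\D}) \to H^0(k,\s{F}_1^{\D}) \to \im\big(H^0(k,\s{F}_1^{\D}) \to H^0(k,\s{F}_0^{\D})\big) \to 0, \]
which is strict exact by Proposition~\ref{top dual fil}, and whose right-hand term is a subgroup of the discrete group $H^0(k,\s{F}_0^{\D})\cong\Z$, hence discrete and already Hausdorff. Separating and using that the monomorphism survives and the right term is unchanged, I obtain a topological extension; the identification $H^0(k,\s{G}_1^{\D})=H^0(k,[L^D\to A^t])\cong H^0(k,[T^D\to A^t])$ provided by \eqref{motives withno ga} (Remark~\ref{h0 motives}.b) turns it into the first displayed sequence.

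For the second extension, I start from the strict exact sequence
\[ H^{1}(k,\NS_{X/k}^D) \to H^0(k,\s{F}_2^{\D})_{\Haus} \xrightarrow{g_{\Haus}} H^0(k,\s{F}_{1}^{\D})_{\Haus} \]
produced in the proof of Proposition~\ref{top dual fil}, where $g$ denotes $H^0(k,\s{F}_2^{\D})\to H^0(k,\s{F}_1^{\D})$. Splitting it gives a topological extension with left term $\ker g_{\Haus}$ and right term $\im g_{\Haus}$. The diagram \eqref{deg map 3} together with the snake lemma identifies $\ker g$ with the image of $H^1(k,\NS_{X/k}^D)$, a finite and hence discrete group, which furnishes the left-hand term; it then remains to identify $\im g_{\Haus}$ with $\im\big(H^0(k,\s{F}_2^{\D})\to H^0(k,\s{F}_1^{\D})\big)_{\Haus}$, i.e.\ to show that separation commutes with the image of the strict map $g$, which follows by factoring $g$ into a strict surjection onto its image followed by an inclusion and applying \cite[Lemma~2.13]{topart} to both. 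I expect this step to be the main obstacle: because $H^0(k,\s{F}_1^{\D})$ is genuinely non-Hausdorff (Remark~\ref{h0 motives}.a), one must track carefully how $\overline{\{0\}}\subseteq H^0(k,\s{F}_2^{\D})$ meets the finite kernel and how it maps into the non-separated image, which is precisely what forces the passage to $(-)_{\Haus}$ and the compatibility of separation with images.

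For the final structural assertion, note that for $(n,i)$ with $n\neq 0$, or with $i=0$, the group $H^n(k,\s{F}_i^{\D})$ is discrete by Proposition~\ref{top dual fil}, so $H^n(k,\s{F}_i^{\D})_{\Haus}$ trivially has all the listed properties. For the remaining cases $n=0$, $i=1,2$, the two extensions exhibit $H^0(k,\s{F}_i^{\D})_{\Haus}$ as an extension whose sub- and quotient terms are each Hausdorff, locally compact, totally disconnected, second countable and compactly generated: the finite discrete groups and the discrete subgroup of $H^0(k,\s{F}_0^{\D})$ are such, while $H^0(k,[T^D\to A^t])_{\Haus}$ and $\im\big(H^0(k,\s{F}_2^{\D})\to H^0(k,\s{F}_1^{\D})\big)_{\Haus}$ are such by Proposition~\ref{top motives} (second countability, total disconnectedness, local compactness) and Proposition~\ref{top dual fil} (compact generation). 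Since this class of abelian topological groups is closed under extensions, as in the proof of Proposition~\ref{top fil} via \cite[p.~58, Theorem~6.15]{Stroppel}, each $H^n(k,\s{F}_i^{\D})_{\Haus}$ lies in it, which completes the argument.
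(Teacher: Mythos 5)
Your proposal is correct and follows essentially the same route as the paper: both extensions are extracted from Proposition \ref{top dual fil} (together with the strict exact sequences constructed in its proof) by applying the separation functor via \cite[Lemma 2.13]{topart}, and the listed topological properties are deduced from those of the sub- and quotient terms. The paper's own proof is a two-line citation (invoking \cite[Proposition 2.C.8(2)]{Cornulier} for second countability where you instead use closure of the class under extensions), so your write-up simply supplies the details along the same lines.
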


\begin{proof}
	The two topological extensions above follow from Proposition \ref{top dual fil} and Lemma 2.13 in \cite{topart}. For the assertion about the topological properties of $H^n(k,\s{F}_i^{\D})_{\Haus}$, we only have to prove the second countability by Proposition \ref{top dual fil}. This follows from \cite[Proposition 2.C.8(2)]{Cornulier}\footnote{In \cite{Cornulier}, all topological groups are assumed to be Hausdorff.}.
\end{proof}


\subsection{Continuity of the boundary maps}

We start determining which cohomology groups of the filtration pieces are torsion. Recall that an abelian torsion group is \emph{of cofinite type} if its $n$-torsion subgroup is finite for all $n\in\N$. The following lemma recalls some properties about torsion groups.

\begin{lemma} \label{cofinite}
	Let $0 \to A \overset{f}{\to} B \overset{g}{\to} C$ be an exact sequence of abelian groups. Then, $B$ is torsion (resp. of cofinite type) whenever both $A$ and $C$ are torsion (resp. of cofinite type). In particular, if a morphism of torsion abelian groups $B\to C$ has finite kernel and $C$ is of cofinite type, then $B$ is of cofinite type too.
\end{lemma}

\begin{proof}
	Clearly if $A$ and $C$ are torsion, then so is $B$. Now, let us suppose that $A$ and $C$ are also of cofinite type. We have the following exact sequence
	\begin{equation} \label{torsion sequence}
		\ker(g)\cap B[n] \to B[n] \to C[n].
	\end{equation} 
	From the exactness of the main exact sequence we have that
	\[ \ker(g)\cap B[n]=\im(f)\cap B[n] \subseteq \im(f)[n]. \]
	Thus for concluding the proof we only have to note that $\im(f)[n]$ is finite (see the exact sequence \eqref{torsion sequence}). Let $b$ be in $\im(f)[n]$. Then there exists $a\in A$ such that $f(a)=b$ and $f(na)=nf(a)=0$. Since $f$ is injective, we conclude that $a\in A[n]$ and therefore $\im(f)[n]\subseteq f(A[n])$. Hence $\im(f)[n]$ is finite since so is $A[n]$.   
\end{proof}

Using the previous lemma we can determine which cohomology groups of the filtration pieces are torsion.

\begin{proposition} \label{torsion}
	Let $i\in\{0,1,2\}$. Then,
	\begin{enumerate}[nosep, label=\alph*)]
		\item $H^r(k,\s{F}_i)$ is torsion for $r\in\{2,3\}$.
		\item $H^r(k,\s{F}_i^{\D})$ is torsion for $r\in\{1,2\}$. 
	\end{enumerate}
\end{proposition}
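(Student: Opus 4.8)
The plan is to induct along the three-step filtration $\s{F}_0\to\s{F}_1\to\s{F}_2$, transferring the torsion (and cofinite-type) properties from the graded pieces to the filtration pieces through the exact sequences \eqref{pairing} and Lemma \ref{cofinite}. First I would record which cohomology groups of the graded pieces are torsion. Since $H^n(k,G)$ is torsion for every locally algebraic group $G$ and $n\geq 1$ (Proposition \ref{top alg grp}.b), the description \eqref{graded pieces} shows that $H^r(k,\s{G}_1)\cong H^{r-1}(k,\Pic^0_{X/k})$ and $H^r(k,\s{G}_2)\cong H^{r-1}(k,\NS_{X/k})$ are torsion for $r\geq 2$, while $H^r(k,\s{G}_0)=H^r(k,\G_{m,k})$ is torsion for $r\geq 1$. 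Dually, using \eqref{dual graded pieces}: $H^r(k,\s{G}_0^{\D})=H^r(k,\Z)$ is torsion for $r\geq 1$ (apply $0\to\Z\to\Q\to\Q/\Z\to 0$ and the vanishing of $H^{\geq 1}(k,\Q)$); $H^r(k,\s{G}_2^{\D})=H^{r+1}(k,\NS_{X/k}^D)$ is torsion for $r\geq 0$; and for the generalized $1$-motive $\s{G}_1^{\D}=[L^D\to A^t]$ the sequence \eqref{nagao 1-motive 1} gives exactness of $H^r(k,A^t)\to H^r(k,\s{G}_1^{\D})\to H^{r+1}(k,L^D)$, whose outer terms are torsion for $r\geq 1$, so $H^r(k,\s{G}_1^{\D})$ is torsion for $r\geq 1$ by Lemma \ref{cofinite}.

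With this in hand, part a) follows by induction on $i$. The base case $\s{F}_0=\G_{m,k}$ gives $H^2(k,\s{F}_0)=\Br(k)\cong\Q/\Z$ and $H^3(k,\s{F}_0)=0$, both torsion. For the inductive step and $r\in\{2,3\}$, the exact sequence $H^r(k,\s{F}_{i-1})\to H^r(k,\s{F}_i)\to H^r(k,\s{G}_i)$ from \eqref{pairing} exhibits $H^r(k,\s{F}_i)$ as an extension of a subgroup of $H^r(k,\s{G}_i)$ by a quotient of $H^r(k,\s{F}_{i-1})$; as both flanking groups are torsion (induction hypothesis together with the previous paragraph), Lemma \ref{cofinite} shows $H^r(k,\s{F}_i)$ is torsion. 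Part b) is entirely analogous, run through the dual row of \eqref{pairing}, namely $H^r(k,\s{G}_i^{\D})\to H^r(k,\s{F}_i^{\D})\to H^r(k,\s{F}_{i-1}^{\D})$, with base case $H^r(k,\s{F}_0^{\D})=H^r(k,\Z)$; for $r\in\{1,2\}$ both flanking groups are torsion, so $H^r(k,\s{F}_i^{\D})$ is torsion.

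The point requiring the most care is the refinement at $r=2$ in part a), namely that $H^2(k,\s{F}_i)$ is of \emph{cofinite} type. Here I would use the short exact sequence $0\to\ker v\to H^2(k,\s{F}_i)\overset{v}{\to}\im v\to 0$ extracted from \eqref{pairing}, where $\im v\subseteq H^2(k,\s{G}_i)$ and $\ker v=\im\!\big(H^2(k,\s{F}_{i-1})\to H^2(k,\s{F}_i)\big)$. The term $\im v$ is of cofinite type once $H^2(k,\s{G}_i)$ is: for $i=1$ this is $H^1(k,\Pic^0_{X/k})$, which is of cofinite type because it sits between the finite group $H^1(k,L)$ and the cofinite-type group $H^1(k,A)$ (Lemma \ref{cofinite} and the cofiniteness of $H^1(k,A)$ for abelian varieties), and for $i=2$ it is the finite group $H^1(k,\NS_{X/k})$. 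The delicate term is $\ker v$, which is a \emph{quotient} of the cofinite-type group $H^2(k,\s{F}_{i-1})$, a situation Lemma \ref{cofinite} does not directly govern. I expect this to be the main obstacle, and I would dispatch it by the elementary fact that a quotient of a torsion group of cofinite type is again of cofinite type (the $p$-primary part of a cofinite-type group has finite $p$-rank, and a quotient can only lower this rank). Alternatively, one avoids the general statement case by case: for $i=1$ the kernel is a quotient of $\Q/\Z$, and for $i=2$ it is the quotient of $H^2(k,\s{F}_1)$ by the image of the finitely generated group $H^1(k,\s{G}_2)=\NS_{X/k}(k)$, which is finite since its target is torsion; both operations preserve cofinite type. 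Feeding this back into Lemma \ref{cofinite} yields that $H^2(k,\s{F}_i)$ is of cofinite type, completing the proof.
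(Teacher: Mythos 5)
Your proof is correct and follows essentially the same route as the paper's: induction along the filtration via the exact sequences $H^r(k,\s{F}_{i-1})\to H^r(k,\s{F}_i)\to H^r(k,\s{G}_i)$ and their duals, Lemma \ref{cofinite}, and the torsionness of the cohomology of the graded pieces, with base case $\s{F}_0=\s{G}_0=\G_{m,k}$. You are in fact more careful than the paper on the cofinite-type refinement at $r=2$, where the left-hand term is only a quotient of $H^2(k,\s{F}_{i-1})$ so that Lemma \ref{cofinite} does not apply verbatim; your observation that quotients of cofinite-type torsion groups remain of cofinite type (or your case-by-case workaround via $\Q/\Z$ and the finite image of $\NS_{X/k}(k)$) correctly fills a step the paper leaves implicit.
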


\begin{proof}
	\noindent
	\begin{enumerate}[nosep, label=\alph*)]
		\item Let $r\in\{2,3\}$. We have the following exact sequence
		\[ H^r(k,\s{F}_{i-1}) \to H^r(k,\s{F}_{i}) \to H^r(k,\s{G}_{i}). \]
		By Proposition \ref{tate} we know that $H^r(k,\s{G}_i)$ is torsion for $i\in\{0,1,2\}$. Then, by Lemma \ref{cofinite} and an inductive argument using the fact that $\s{F}_0=\s{G}_0$, we conclude the proof.

		\item Similarly, for $r\in\{1,2\}$, using the exact sequence
		\[ H^r(k,\s{G}_i^{\D}) \to H^r(k,\s{F}_{i}^{\D}) \to H^r(k,\s{F}_{i-1}^{\D}), \]
		Lemma \ref{cofinite} and the fact that $H^r(k,\s{F}_0^{\D})\cong H^r(k,\Z)$ is torsion, we only have to prove that $H^r(k,\s{G}_i^{\D})$ is torsion for all $i\in\{0,1,2\}$ and $r\in\{1,2\}$. This last assertion follows from Proposition \ref{tate}. 
	\end{enumerate}
\end{proof}

The following lemma will allow us to take profinite completion without disrupting exactness in \eqref{pairing}.

\begin{lemma} \label{boundary}
	The boundary maps of \eqref{pairing} have finite images. More precisely,
	\begin{enumerate}[nosep, label=\alph*)]
		\item $H^r(k,\s{G}_i)\to H^{r+1}(k,\s{F}_{i-1})$ has finite image for $(r,i)\in\{(1,1),(1,2),(2,2)\}$;
		\item $H^r(k,\s{F}_{i-1}^{\D})\to H^{r+1}(k,\s{G}_{i}^{\D})$ has finite image for $(r,i)\in\{(0,1),(0,2),(-1,2)\}$.
	\end{enumerate}
	In any other case, boundary maps are zero.
\end{lemma}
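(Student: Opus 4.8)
The plan is to first determine which connecting maps in \eqref{pairing} can be nonzero, and then to bound the images of the survivors, treating four of them directly and the remaining two by a duality transfer. A connecting map vanishes as soon as its source or its target vanishes. Feeding the vanishing ranges of Proposition \ref{vanishing graded} and Corollary \ref{vanish} into the four long exact sequences coming from \eqref{triangle1}, \eqref{triangle2}, \eqref{dual triangle1} and \eqref{dual triangle2}, one checks that the only connecting maps with both source and target nonzero are exactly the six listed; this already proves the last assertion. Moreover, the commutativity of \eqref{pairing} groups these six into three adjoint pairs for the Yoneda pairing: $(1,1)$ in a) is adjoint to $(0,1)$ in b), $(1,2)$ in a) to $(0,2)$ in b), and $(2,2)$ in a) to $(-1,2)$ in b) — in each pair the two maps sit in complementary degrees, hence are transposes of one another under the pairings on the relevant graded pieces.

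Next I would dispatch four of the maps directly. For $(2,2)$ the source $H^2(k,\s{G}_2)\cong H^1(k,\NS_{X/k})$ is finite (Proposition \ref{tate}), so the image is finite. For $(1,2)$ the source $H^1(k,\s{G}_2)\cong\NS_{X/k}(k)$ is finitely generated while the target $H^2(k,\s{F}_1)$ is torsion by Proposition \ref{torsion}, so the image is a finitely generated torsion group, hence finite. For $(0,1)$ in b) the source $H^0(k,\s{F}_0^\D)=H^0(k,\Z)=\Z$ is cyclic, and the target $H^1(k,\s{G}_1^\D)$ is torsion: indeed by Proposition \ref{tate} it is isomorphic to $\Hom_{\cts}(H^1(k,\s{G}_1)^\wedge,\Q/\Z)$, the Pontryagin dual of a profinite group, which is discrete and torsion (Fact \ref{Pontryagin dual}); a cyclic subgroup of a torsion group is finite. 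For $(-1,2)$ in b) the target $H^0(k,\s{G}_2^\D)\cong H^1(k,\NS_{X/k}^{D})$ is finite, so again the image is finite.

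The remaining maps, $(1,1)$ in a) and $(0,2)$ in b), do not admit such a naive argument: the target $H^1(k,\s{G}_2^\D)\cong H^2(k,\NS_{X/k}^{D})$ of $(0,2)$ is in general infinite (it is $\Hom(\NS_{X/k}(k),\Q/\Z)$), and the source $\Pic^0_{X/k}(k)$ of $(1,1)$ is neither torsion nor finitely generated. Here I would invoke the following transfer principle: if $P\colon V\times V'\to\Q/\Z$ is a pairing non-degenerate in the second variable, $Q\colon W\times W'\to\Q/\Z$ is any pairing, and $f\colon V\to W$, $g\colon W'\to V'$ satisfy $P(v,g(w'))=\pm Q(f(v),w')$ for all $v,w'$, then $\im f$ finite forces $\im g$ finite. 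The proof is immediate: each functional $P(-,g(w'))=\pm Q(f(-),w')$ factors through the finite group $\im f$, so lies in the finite subgroup $\Hom(\im f,\Q/\Z)$ of $\Hom(V,\Q/\Z)$, and non-degeneracy makes $v'\mapsto P(-,v')$ injective, whence $\im g$ is finite. Applying this with $P$ the relevant non-degenerate Yoneda pairing of Proposition \ref{tate} and using the adjointness recorded above, the finiteness of $(0,1)$ in b) yields that of $(1,1)$ in a), and the finiteness of $(1,2)$ in a) yields that of $(0,2)$ in b).

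The one genuinely delicate point — and the main obstacle — is the adjointness used in the last step: that \eqref{pairing} commutes with the Yoneda pairing, i.e. that each top connecting map is, up to sign, the transpose of the corresponding bottom one under the pairings on the graded pieces. This is the $\delta$-functoriality of the pairing, and since our Yoneda pairing is defined by composing morphisms in $D^b(k_{\sm})$ against the exact triangles \eqref{triangle1}, \eqref{triangle2}, \eqref{dual triangle1} and \eqref{dual triangle2}, the compatibility is formal; nonetheless it should be stated with care, signs included, before the transfer principle can be applied. Everything else rests only on the finiteness and non-degeneracy inputs already available in Propositions \ref{tate} and \ref{torsion} and Fact \ref{Pontryagin dual}.
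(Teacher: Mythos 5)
Your proof is correct, and for the vanishing statement and four of the six nonzero boundary maps it coincides with the paper's argument (finite source for $(2,2)$, finite target for $(-1,2)$, finitely generated source against torsion target for $(1,2)$ and $(0,1)$). The divergence is in the two remaining maps. For $(0,2)$ the paper also runs a duality transfer from the finiteness of the $(1,2)$ map: it factors $\partial\colon H^1(k,\s{G}_2)\to H^2(k,\s{F}_1)$ through the profinite completion and observes that the dualized map in the square \eqref{square} has compact image inside a discrete group; your ``every functional $Q(f(-),w')$ factors through the finite group $\im f$'' version is a topology-free rephrasing of the same idea. For $(1,1)$, however, the paper does something genuinely different: it compares the triangle $\s{F}_0\to\s{F}_1\to\s{G}_1$ with $\G_{m,k}\to\tau_{\leq 1}R\phi_*\G_{m,X}\to\Pic_{X/k}[-1]$, identifies the image of $\Pic_{X/k}^0(k)\to\Br(k)$ with a subgroup of $\ker(\Br(k)\to\Br(X))$, and quotes the finiteness of that kernel from \cite[Proposition 5.4.4]{BG}. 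Your transfer from the $(0,1)$ case --- which needs only that $\Z\times\Br(k)\to\Q/\Z$ is non-degenerate in the second variable --- replaces this arithmetic input by a purely formal one and makes the treatment of the two hard cases uniform; what it forgoes is the explicit identification of the image inside $\ker(\Br(k)\to\Br(X))$, which is the arithmetically meaningful description. The adjointness of the connecting maps with the Yoneda pairing, which you rightly flag as the point requiring care, is used by the paper itself without proof (it is exactly the commutativity of \eqref{square} and of the outer squares of the diagrams \eqref{case1} and \eqref{case2}), and the sign ambiguity is harmless for a finiteness conclusion, so your reliance on it is no weaker than the paper's.
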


\begin{proof}
	The vanishing of boundary maps away from the selected cases is directly obtained from the fact that in these cases either the source or the target of the map are zero maps (see Proposition \ref{tate} and Corollary \ref{vanish}).
	\begin{enumerate}[nosep, label=\alph*)]
		\item For $(r,i)=(2,2)$ the source of the boundary map is $H^2(k,\s{G}_2)=H^1(k,\NS_{X/k})$, which is finite by Proposition \ref{tate}, therefore its image is also finite. For $(r,i)=(1,2)$ the source of the boundary map is the finitely generated group $H^1(k,\s{G}_2)\cong \NS_{X/k}(k)$ and the target is $H^2(k,\s{F}_1)$, which is torsion by Corollary \ref{torsion}, then its image is finite. For $(r,i)=(1,1)$, we have the following morphism of exact triangles
		\[ \begin{tikzcd}
			\s{F}_0 \ar[r] \ar[d,equal] & \s{F}_1 \ar[r] \ar[d] & \s{G}_1 \ar[r] \ar[d] & \s{F}_0[1] \ar[d,equal] \\
			\G_{m,k} \ar[r] & \tau_{\leq 1}R\phi_*\G_{m,k} \ar[r] & \Pic_{X/k}[-1] \ar[r] & \G_{m,k}[1],
		\end{tikzcd} \]
		which induces the following commutative exact diagram
		\[ \begin{tikzcd}
			H^1(k,\s{G}_1) \ar[r] \ar[d] & \Br(k) \ar[d,equal] & \\
			\Pic_{X/k}(k) \ar[r] & \Br(k) \ar[r] & H^2(k,\tau_{\leq 1}R\phi_*\G_{m,X})
		\end{tikzcd} \]
		Note that $\Br(k) \to H^2(k,\tau_{\leq 1}R\phi_*\G_{m,X})$ factors through the restriction map $\Br(k)\to\Br(X)$ via the standard filtration $\tau_{\leq 0}R\phi_*\G_{m,X} \to \tau_{\leq 1}R\phi_*\G_{m,X} \to \tau_{\leq 2}R\phi_*\G_{m,X}$. Then, the image of the boundary map $H^1(k,\s{G}_1) \to \Br(k)$ is contained in the kernel of $\Br(k)\to\Br(X)$, which is finite. A proof of this last assertion follows similarly as \cite[Proposition 5.4.4]{BG}) using the fact that the group of rational points of an abelian variety over a $p$-field is topologically finitely generated.
		
		\item For $(r,i)=(-1,2)$ the target of the boundary map is $H^0(k,\s{G}_2^{\D})=H^1(k,\NS_{X/k}^D)$, which is finite by Proposition \ref{tate}. For $(r,i)=(0,1)$, the source of the boundary map is $H^0(k,\s{F}_0^{\D})=\Z$ and the target is $H^1(k,\s{G}_1^{\D})$, which is torsion by Proposition \ref{tate}, therefore its image is finite. For $(r,i)=(0,2)$, the boundary map 
		\[ \partial : H^1(k,\s{G}_2) \to H^2(k,\s{F}_1) \]
		is strict continuous since its source and target are discrete; and it has finite image by the previous point. Then $\partial$ factors as
		\[ H^1(k,\s{G}_2) \to H^1(k,\s{G}_2)^\wedge \overset{\hat{\partial}}{\to} H^2(k,\s{F}_1) \]
		by Lemma \ref{profinite}. Thus, from the Yoneda pairing, we get the following commutative diagram
		\begin{equation} \label{square}
			\begin{split}
				\xymatrix{
					H^0(k,\s{F}_1^{\D}) \ar[r] \ar[d] & H^1(k,\s{G}_2^{\D}) \ar[d]^{\sim} \\
					\Hom(H^2(k,\s{F}_1),\Q/\Z) \ar[r] & \Hom(H^1(k,\s{G}_2)^\wedge,\Q/\Z),
				}
			\end{split}
		\end{equation}
		where the right hand vertical arrow of \eqref{square} is an isomorphism by Proposition \ref{tate}. Moreover the bottom arrow in \eqref{square} is continuous since $\hat{\partial}$ is continuous. On the other hand, the group $H^2(k,\s{F}_1)$ is discrete by Proposition \ref{top fil} and torsion by Corollary \ref{torsion}, thereby the group $\Hom(H^2(k,\s{F}_1),\Q/\Z)$ is a profinite abelian group by Pontryagin duality. Similarly, we have that $\Hom(H^1(k,\s{G}_2)^\wedge,\Q/\Z)$ is a discrete abelian torsion group since $H^1(k,\s{G}_2)^\wedge$ is profinite abelian group. Then the image of the bottom arrow in \eqref{square} is finite since it is compact and discrete. Hence the boundary map $H^0(k,\s{F}_1^{\D}) \to H^1(k,\s{G}_2^{\D})$ has finite image.
	\end{enumerate}
\end{proof}

In the following result we conclude the strict continuity of the sequences in \eqref{pairing}.

\begin{corollary} \label{boundary cont}
	All boundary maps in \eqref{pairing} are continuous and strict.
\end{corollary}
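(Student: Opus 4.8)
The plan is to observe first that the non-connecting arrows in \eqref{pairing}, namely those appearing in the sequences \eqref{top seq1}, \eqref{top seq2}, \eqref{dual top seq1} and \eqref{dual top seq2}, are already strict continuous by the very construction of the topologies in Propositions \ref{top fil} and \ref{top dual fil}. Hence only the connecting (boundary) homomorphisms remain to be analyzed, and by Lemma \ref{boundary} each of these is either zero or has finite image, the latter occurring exactly for $(r,i)\in\{(1,1),(1,2),(2,2)\}$ in part a) and $(r,i)\in\{(0,1),(0,2),(-1,2)\}$ in part b). A zero map into a Hausdorff group is trivially continuous and strict, so I would only need to treat the six finite-image cases.

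I would then split these six cases according to the topology carried by their source and target. Inspecting Propositions \ref{top fil} and \ref{top dual fil}, in the cases $(1,2)$ and $(2,2)$ of a) and $(0,1)$ and $(-1,2)$ of b) both source and target are discrete; a homomorphism between discrete abelian groups is automatically continuous, and strict because the quotient by its kernel is discrete and maps homeomorphically onto its (discrete) image. This disposes of four of the six cases at once.

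The two genuine cases are $(1,1)$ in a), where the source is $H^1(k,\s{G}_1)\cong\Pic_{X/k}^0(k)$ and the target is the discrete group $H^2(k,\s{F}_0)=\Br(k)$, and $(0,2)$ in b), where the source is the possibly non-Hausdorff group $H^0(k,\s{F}_1^{\D})$ and the target is the discrete group $H^1(k,\s{G}_2^{\D})$. In both, the target is discrete and, by Lemma \ref{boundary}, the image is finite, so the kernel has finite index; the whole matter reduces to showing this kernel is \emph{open}. For $(1,1)$ I would write $\Pic_{X/k}^0(k)=H^0(k,[0\to\Pic_{X/k}^0])$ and invoke Proposition \ref{top motives}a), by which every finite-index subgroup of $H^0$ of a generalized 1-motive is open. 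For $(0,2)$, Proposition \ref{top dual fil}b) presents $H^0(k,\s{F}_1^{\D})$ as a topological extension of a discrete group by the open subgroup $H^0(k,[T^D\to A^t])$, whose finite-index subgroups are again open by Proposition \ref{top motives}a); a finite-index subgroup $U$ of the extension then meets this open subgroup $N$ in a finite-index, hence open, subgroup $U\cap N$ of $N$, and since $N$ is open, $U$ is a union of cosets of the open subgroup $U\cap N$ and is therefore itself open.

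Once the kernel is known to be open, continuity is immediate since the target is discrete, and strictness follows because the quotient of the source by an open finite-index subgroup is finite discrete and the induced continuous bijection onto the finite (hence discrete) image is a homeomorphism. I expect the main obstacle to be the case $(0,2)$: its source $H^0(k,\s{F}_1^{\D})$ need not be Hausdorff and is only accessible through its description as an extension, so the openness of finite-index subgroups must be transported across that extension rather than read off directly.
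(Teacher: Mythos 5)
Your proposal is correct and follows essentially the same route as the paper: reduce to the two cases $(1,1)$ of a) and $(0,2)$ of b) where the source is not discrete, and show the finite-index kernels are open using that all finite-index subgroups of $H^0$ of a generalized 1-motive (and of an extension of a discrete group by such) are open. The only cosmetic difference is that you spell out the coset argument for the extension case where the paper cites an external reference.
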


\begin{proof}
	We note that in the selected cases of Lemma \ref{boundary} all the boundary maps have discrete target and the only ones whose source is not discrete are the cases $(1,1)$ of $a)$ and $(0,2)$ of $b)$. Hence, away from these cases, the boundary maps are strict continuous. Thus we only have to check the continuity of the boundary maps $H^1(k,\s{G}_1)\to H^{2}(k,\s{F}_{0})$ and $H^0(k,\s{F}_{1}^{\D})\to H^{1}(k,\s{G}_{2}^{\D})$, that is, we have to prove that the kernels of these maps are open since the target of these maps are discrete by the Propositions \ref{top fil} and \ref{top dual fil}. By Lemma \ref{boundary}, the kernels of the maps 
	\[ H^1(k,\s{G}_1)\to H^{2}(k,\s{F}_{0}) \quad \text{and} \quad H^0(k,\s{F}_{1}^{\D})\to H^{1}(k,\s{G}_{2}^{\D}) \]
	have finite index in $H^1(k,\s{G}_1)$ and $H^0(k,\s{F}_{1}^{\D})$ respectively. On one hand, the group $H^1(k,\s{G}_1)$ is isomorphic to $\Pic_{X/k}^0(k)$. Since $\Pic_{X/k}^0$ is a semiabelian variety over a $p$-adic field, all its finite index subgroup are open (see \cite[Example 3.11]{topart}). Hence, the boundary map $H^1(k,\s{G}_1)\to H^{2}(k,\s{F}_{0})$ is continuous. On the other hand, the group $H^0(k,\s{F}_{1}^{\D})$ is a topological extension of a discrete group by $H^0(k,[T^D\to A^t])$. Hence, since every finite index  subgroup of $H^0(k,[T^D\to A^t])$ is open (see \cite[Remark 2.4]{HS05}), every finite index  subgroup of $H^0(k,\s{F}_{1}^{\D})$ is open (see Proposition 3.10 \emph{loc. cit.}). In particular, $\ker(H^0(k,\s{F}_{1}^{\D})\to H^{1}(k,\s{G}_{2}^{\D}))$ is open in $H^0(k,\s{F}_{1}^{\D})$ and, therefore, the boundary map $H^0(k,\s{F}_{1}^{\D})\to H^{1}(k,\s{G}_{2}^{\D})$ is continuous.
\end{proof}

\begin{corollary} 
	The map $H^0(k,\s{F}_{2}^{\D})\to H^0(k,\s{F}_{1}^{\D})$ is open.
\end{corollary}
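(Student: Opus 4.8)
The plan is to deduce openness from two facts: that the map is strict by construction, and that its image is open. First I would apply the long exact cohomology sequence of the dual triangle \eqref{dual triangle2}, giving
\[ H^0(k,\s{G}_2^{\D}) \to H^0(k,\s{F}_2^{\D}) \overset{j}{\to} H^0(k,\s{F}_{1}^{\D}) \overset{\partial}{\to} H^1(k,\s{G}_2^{\D}), \]
where $j$ denotes the map under consideration and $\partial$ is the boundary map occurring in \eqref{pairing} for $(r,i)=(0,2)$. By Proposition \ref{top dual fil}, the three-term segment $H^0(k,\s{G}_2^{\D}) \to H^0(k,\s{F}_2^{\D}) \to H^0(k,\s{F}_{1}^{\D})$ is a strict exact sequence of topological groups; in particular $j$ is strict.

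It then remains to check that $\im j$ is open in $H^0(k,\s{F}_{1}^{\D})$. Granting this, $j$ is open: for any open subset $U$, strictness makes $j(U)$ open in $\im j$ with its subspace topology, and since $\im j$ is open in the ambient group, $j(U)$ is open there as well. To see that $\im j$ is open, I would use exactness to identify $\im j = \ker\partial$, and then invoke Corollary \ref{boundary cont}, which asserts that every boundary map in \eqref{pairing} is continuous and strict; in particular $\partial$ is continuous. Since its target $H^1(k,\s{G}_2^{\D})$ is discrete by Proposition \ref{top dual fil}, the kernel $\ker\partial$ is the preimage of the open set $\{0\}$ and is therefore open in $H^0(k,\s{F}_{1}^{\D})$.

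No step here presents a genuine obstacle: the argument reduces entirely to the continuity of the boundary map $H^0(k,\s{F}_{1}^{\D})\to H^1(k,\s{G}_2^{\D})$, already established in Corollary \ref{boundary cont}, together with the strictness built into the topology of Proposition \ref{top dual fil} and the discreteness of $H^1(k,\s{G}_2^{\D})$. The only mildly delicate point is that the topology on $H^0(k,\s{F}_2^{\D})$ is pinned down only up to its separation, but this does not affect the present claim, since both the strictness of $j$ and the openness of $\ker\partial$ are statements that hold for the chosen topology supplied by Proposition \ref{top dual fil}.
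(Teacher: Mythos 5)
Your argument is correct and coincides with the paper's own proof: both identify the image of the map with the kernel of the boundary $H^0(k,\s{F}_1^{\D})\to H^1(k,\s{G}_2^{\D})$, deduce openness of that kernel from the continuity of the boundary (Corollary \ref{boundary cont}) together with the discreteness of its target, and then conclude via the strictness supplied by Proposition \ref{top dual fil}. Your write-up is in fact slightly cleaner, since the paper's version contains a typo conflating the boundary map with the map under consideration in its final sentence.
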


\begin{proof}
	By Corollary \ref{boundary cont}, the boundary map $\partial:H^0(k,\s{F}_{1}^{\D})\to H^{1}(k,\s{G}_{2}^{\D})$ is continuous. Hence the image of $H^0(k,\s{F}_{2}^{\D})\to H^0(k,\s{F}_{1}^{\D})$ is open since $H^{1}(k,\s{G}_{2}^{\D})$ is discrete. Hence $\partial$ is open since $\partial$ it is also strict. 
\end{proof}


\subsection{Tate duality for the filtration pieces}

In this section we will prove that the filtration pieces satisfy a duality result as in Proposition \ref{tate}. In order to do that, it is necessary to do the following two topological operations: Pontryagin dual and profinite completion. Thus, we must ensure that when applying Pontryagin dual and profinite completion to certain exact sequences the exactness will not be disrupted. \\

Throughout this section, we will use the following two commutative diagrams of abelian groups
\begin{equation} \label{case1}
	\begin{tikzcd}[column sep=1.5em]
		H^{1-r}(k,\s{G}_i) \ar{r}{\partial} \ar{d}{\alpha} & H^{2-r}(k,\s{F}_{i-1}) \ar{r} \ar{d}{\beta} & H^{2-r}(k,\s{F}_i) \ar{r} \ar{d}{\gamma} & H^{2-r}(k,\s{G}_i) \ar{r}{\partial} \ar{d}{\delta} & H^{3-r}(k,\s{F}_{i-1}) \ar{d}{\epsilon} \\
		H^{r+1}(k,\s{G}_i^{\D})^* \ar{r} & H^r(k,\s{F}_{i-1}^{\D})^* \ar{r} & H^r(k,\s{F}_{i}^{\D})^* \ar{r} & H^r(k,\s{G}_{i}^{\D})^* \ar{r} & H^{r-1}(k,\s{F}_{i-1}^{\D})^* 
	\end{tikzcd}
\end{equation}
and
\begin{equation} \label{case2}
	\begin{tikzcd}[column sep=1.5em]
		H^{r-1}(k,\s{F}_{i-1}^{\D}) \ar{r}{\partial'} \ar{d}{\alpha'} & H^r(k,\s{G}_{i}^{\D}) \ar{r} \ar{d}{\beta'} & H^r(k,\s{F}_{i}^{\D}) \ar{r} \ar{d}{\gamma'} & H^r(k,\s{F}_{i-1}^{\D}) \ar{r}{\partial'} \ar{d}{\delta'} & H^{r+1}(k,\s{G}_i^{\D}) \ar{d}{\epsilon'} \\
		H^{3-r}(k,\s{F}_{i-1})^* \ar{r} & H^{2-r}(k,\s{G}_i)^* \ar{r} & H^{2-r}(k,\s{F}_i)^* \ar{r} & H^{2-r}(k,\s{F}_{i-1})^* \ar{r} & H^{1-r}(k,\s{G}_i)^*,
	\end{tikzcd}
\end{equation}
where all groups above are regarded as abelian topological groups via the assignment made in Section \ref{topology filtration}. In the case of $H^0(k,\s{F}^{\D}_2)$, we have to choose a topology, say $\tau$ (see Proposition \ref{top dual fil}). Thus, all rows of diagrams \eqref{case1} and \eqref{case2} are strict exact sequences of abelian topological groups.

\begin{proposition} \label{continuity yoneda}
	The Yoneda pairing for the filtration pieces $\s{F}_i$
	\begin{equation} \label{yoneda pairing}
		H^r(k,\s{F}_i^{\D}) \times H^{2-r}(k,\s{F}_i) \to H^2(k,\G_{m,k})\cong\Q/\Z
	\end{equation}
	is continuous for every $i\in\{0,1,2\}$ and $r\in\Z$.
\end{proposition}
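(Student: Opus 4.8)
The plan is to establish continuity one pair $(i,r)$ at a time, by induction on $i$, after reducing each case to the continuity of a single adjoint homomorphism. First I would observe, reading off Propositions~\ref{top fil} and~\ref{top dual fil} together with the vanishing in Corollary~\ref{vanish}, that in every nontrivial pair $(i,r)$ at least one of the two factors is discrete: for $r\in\{-1,1,2\}$ the group $H^r(k,\s{F}_i^{\D})$ is discrete, whereas for $r=0$ the group $H^{2-r}(k,\s{F}_i)=H^2(k,\s{F}_i)$ is discrete. Since all groups involved are locally compact, Remark~\ref{pairing discrete}, applied to whichever factor is discrete, reduces the continuity of the pairing \eqref{yoneda pairing} to the continuity at $0$---hence the continuity---of the adjoint homomorphism into the Pontryagin dual of the discrete factor. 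For $r\in\{-1,1,2\}$ this adjoint is the middle vertical arrow $\gamma\colon H^{2-r}(k,\s{F}_i)\to H^r(k,\s{F}_i^{\D})^{*}$ of diagram \eqref{case1}, and for $r=0$ it is the middle vertical arrow $\gamma'\colon H^0(k,\s{F}_i^{\D})\to H^2(k,\s{F}_i)^{*}$ of diagram \eqref{case2}; this is exactly why both diagrams were recorded.

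For the base case $i=0$ we have $\s{F}_0=\G_{m,k}$ and $\s{F}_0^{\D}\cong\Z$, so by Corollary~\ref{vanish} the pairing is nontrivial only for $r\in\{0,2\}$ and is the classical cup-product pairing $H^r(k,\Z)\times H^{2-r}(k,\G_{m,k})\to\Q/\Z$. For $r=0$ both factors are discrete and continuity is immediate; for $r=2$ it is the local reciprocity pairing $H^2(k,\Z)\times k^{*}\to\Q/\Z$, whose continuity is part of classical local Tate duality and may be cited directly.

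For the inductive step, fix $i\in\{1,2\}$ and assume the statement for $\s{F}_{i-1}$. In the relevant diagram (\eqref{case1} or \eqref{case2}) the four outer vertical maps are continuous: the two attached to $\s{F}_{i-1}$ by the inductive hypothesis, and the two attached to the graded piece $\s{G}_i$ by the perfect continuous duality of Proposition~\ref{tate}. The top rows are strict exact sequences of topological groups, since the topologies of Section~\ref{topology filtration} together with Corollary~\ref{boundary cont} make the connecting maps continuous and strict; the bottom rows are the Pontryagin duals of the analogous strict exact sequences for the dual complexes, hence again strict exact in $\LCA$ by Proposition~\ref{pontryagin exactness}. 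A topological five lemma for continuity (in the spirit of \cite[Proposition~2.8]{FulpGrif} and the results of \cite{topart}) then forces the middle vertical map to be continuous, which is what we want.

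The hard part will be the single pair $(i,r)=(2,0)$, where the source $H^0(k,\s{F}_2^{\D})$ of $\gamma'$ may fail to be Hausdorff. Because its target is a Pontryagin dual, and hence Hausdorff, $\gamma'$ factors through $H^0(k,\s{F}_2^{\D})_{\Haus}$, so its continuity is equivalent to that of the induced map on the Hausdorff quotient; I would therefore run the five-lemma argument on the Hausdorffified diagram, whose well-definedness and strict exactness are precisely what Corollary~\ref{top dual fil haus} provides. More generally, the genuine obstacle throughout is not the algebraic exactness or the four outer continuities---these are cheap---but verifying that the hypotheses of the topological five lemma really hold, namely strict exactness of both rows in $\LCA$ and the openness of the structural maps coming from the topologies of Section~\ref{topology filtration}, so that continuity propagates to the middle column.
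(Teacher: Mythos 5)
Your proposal is correct and follows essentially the same route as the paper: reduce via Proposition \ref{pairing continuous}/Remark \ref{pairing discrete} to the continuity of the adjoint maps $\gamma$ and $\gamma'$ in diagrams \eqref{case1} and \eqref{case2}, then induct on $i$ using the strict exact ladders, Proposition \ref{tate} for the graded pieces, and the topological five-lemma-type results of \cite{topart}, handling the possibly non-Hausdorff case $(i,r)=(2,0)$ through the Hausdorff quotient of Corollary \ref{top dual fil haus}. Your explicit treatment of $r=2$ via the adjoint map into $H^2(k,\s{F}_i^{\D})^*$ is in fact slightly more careful than the paper's, which dismisses that case by asserting both factors are discrete even though $H^0(k,\s{F}_i)\cong k^*$ is not.
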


\begin{proof}
	Note that for $r\notin\{-1,0,1,2,3\}$, the left-side of the pairing \eqref{yoneda pairing} is trivial. Moreover, by Propositions \ref{top fil} and \ref{top dual fil}, in \eqref{yoneda pairing} always appears a discrete group. Furthermore, for $r=-1,2$ or 3, both groups in the left-side of \eqref{yoneda pairing} are discrete. Then, by Proposition \ref{pairing continuous}, we only have to check the continuity of the middle vertical map $\gamma:H^{2-r}(k,\s{F}_i)\to H^r(k,\s{F}_{i}^{\D})^*$ in \eqref{case1} or the middle vertical map $\gamma':H^r(k,\s{F}_{i}^{\D})\to H^{2-r}(k,\s{F}_i)^*$ in \eqref{case2} as appropriate and only when $r=0,1$. We will prove this result inductively using the diagrams \eqref{case1} and \eqref{case2}. Thus we may assume that, in theses diagrams, the second vertical maps $\beta$ and $\beta'$ and the fourth vertical maps $\delta$ and $\delta'$ are continuous.
	
	For $r=0$, we have to check the continuity of $\gamma':H^0(k,\s{F}_i^{\D}) \to H^2(k,\s{F}_i)^*$, which is automatically continuous for $i=0$ since, in this case, $H^0(k,\s{F}_i^{\D})$ is discrete. For $i=1$, the result follows from point $a)$ in Proposition 3.22 in \cite{topart} and the fact that $H^0(k,\s{F}_0^{\D})$ is discrete. For $i=2$, the result follows from point $a)$ in Proposition 3.22 in \cite{topart}, Corollary \ref{top dual fil haus} and the fact that both $H^0(k,\s{G}_2^{\D})$ and $H^2(k,\s{G}_2)$ are discrete and finite.
	
	For $r=1$, we have to check the continuity of $\gamma:H^1(k,\s{F}_i)\to H^1(k,\s{F}_i^{\D})^*$, which is automatically continuous for $i=0$ since, in this case, both the source and target of $\gamma$ are trivial. For $i=1$, the result follows from the continuity of $\delta:H^1(k,\s{G}_1)\to H^1(k,\s{G}_1^{\D})^*$ (see Proposition \ref{tate}) and the triviality of the groups $H^1(k,\s{F}_0)$ and $H^1(k,\s{F}_0^{\D})$. For $i=2$, the result follows from point $a)$ in Proposition 3.22 in \cite{topart} and the fact that $H^1(k,\s{G}_2)$ is discrete. 
\end{proof}

\begin{proposition} \label{non-degeneracy}
	The Yoneda pairing for the filtration pieces $\s{F}_i$
	\[ H^r(k,\s{F}_i^{\D}) \times H^{2-r}(k,\s{F}_i) \to H^2(k,\G_{m,k})\cong\Q/\Z \]
	is non-degenerate for every $i\in\{0,1,2\}$ and $r\in\Z$. Furthermore, these cohomology groups are trivial for $r\notin\{-1,0,1,2,3\}$.
\end{proposition}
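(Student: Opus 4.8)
The statement splits into a vanishing part and a non-degeneracy part. The vanishing for $r\notin\{-1,0,1,2,3\}$ is immediate from Corollary \ref{vanish}: there both $H^r(k,\s{F}_i^{\D})$ and $H^{2-r}(k,\s{F}_i)$ already vanish for $r$ outside $\{-1,0,1,2\}$, so the pairing is the zero pairing of zero groups. The real content is non-degeneracy for $i\in\{0,1,2\}$ and $r\in\{-1,0,1,2,3\}$, which I would prove by induction on $i$ using the two diagrams \eqref{case1} and \eqref{case2}.

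The base case $i=0$ is exactly Tate duality for $\s{F}_0=\s{G}_0=\G_{m,k}$, i.e.\ the case $i=0$ of Proposition \ref{tate}. For the inductive step I would assume non-degeneracy for $\s{F}_{i-1}$ and feed the exact triangles \eqref{triangle1}, \eqref{triangle2} and their duals \eqref{dual triangle1}, \eqref{dual triangle2} into \eqref{case1} and \eqref{case2}, whose top rows are the long exact cohomology sequences of those triangles and whose bottom rows are their Pontryagin duals. In each diagram the four outer vertical maps are the duality maps of the graded piece $\s{G}_i$ (non-degenerate by Proposition \ref{tate}) and of $\s{F}_{i-1}$ (non-degenerate by the inductive hypothesis), while the middle vertical map is the duality map for $\s{F}_i$. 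The goal is to deduce that this middle map is injective, which is precisely the non-degeneracy of the Yoneda pairing for $\s{F}_i$; I would run the chase in \eqref{case1} to control $H^{2-r}(k,\s{F}_i)\to H^r(k,\s{F}_i^{\D})^*$ and in \eqref{case2} to control the transpose, since the two diagrams carry information in complementary degree ranges.

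Before any chase I would secure exactness of the bottom (dualized) rows. The top rows are exact by construction; for the bottom rows I would invoke that the sequences of \S\ref{topology filtration} are strict exact in $\LCA$ (Propositions \ref{top fil} and \ref{top dual fil}) with strict continuous boundary maps (Corollary \ref{boundary cont}), so that Pontryagin duality preserves exactness (Proposition \ref{pontryagin exactness}). The one subtlety is the possibly non-Hausdorff group $H^0(k,\s{F}_2^{\D})$: since $(-)^*$ factors through the Hausdorff quotient and $H^0(k,\s{F}_2^{\D})_{\Haus}$ lies in $\LCA$ (Corollary \ref{top dual fil haus}), the dualized sequence is still exact.

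The main obstacle is that the duality maps of the graded pieces are isomorphisms only in certain cohomological degrees and merely injective completion maps in others (compare Proposition \ref{tate}, where the perfect pairings appear only after applying $(-)^\wedge$ to one factor). Hence the naive isomorphism five-lemma is unavailable uniformly, and the injectivity chase for the middle map breaks down exactly where the relevant outer map is injective but not surjective. To get around this I would carry a sharpened inductive hypothesis recording, degree by degree, which duality maps of $\s{F}_{i-1}$ are isomorphisms and which are only injective, mirroring the pattern for the $\s{G}_i$ (isomorphism in low degree, injective completion in high degree). In the extreme degrees the troublesome outer maps have zero source or target by the vanishing of Proposition \ref{vanishing graded} and Corollary \ref{vanish}, hence are trivially surjective, so that the only genuinely delicate cases are the middle degrees $r\in\{0,1\}$; there I would play the two diagrams against each other, using that the surjectivity needed for one outer map corresponds, through Pontryagin biduality of the Hausdorff $\LCA$ groups involved, to the injectivity of its transpose supplied by the other diagram. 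This interplay, together with the finiteness of the boundary images from Lemma \ref{boundary} (which guarantees that inserting profinite completions does not disturb exactness), is what lets the induction close.
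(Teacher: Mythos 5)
Your overall architecture coincides with the paper's: vanishing from Corollary \ref{vanish}, induction on $i$ with base case $\s{F}_0=\s{G}_0=\G_{m,k}$ handled by Proposition \ref{tate}, a four-lemma chase in the diagrams \eqref{case1} and \eqref{case2}, and Lemma \ref{profinite} together with the finiteness of boundary images (Lemma \ref{boundary}) to insert profinite completions without breaking exactness. In the extreme degrees your observation that the relevant outer maps have trivial source or target is exactly how the paper disposes of those cases.

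The gap is in your mechanism for the one genuinely delicate case, namely $i=2$, $r=1$ in diagram \eqref{case2}, where the four-lemma requires the \emph{surjectivity} of $\alpha'^\wedge\colon H^{0}(k,\s{F}_{1}^{\D})^\wedge \to H^{2}(k,\s{F}_{1})^*$. You propose to extract this from the injectivity of the transpose $H^{2}(k,\s{F}_{1})\to H^{0}(k,\s{F}_{1}^{\D})^*$ (available from the inductive hypothesis) via Pontryagin biduality. That implication is false for a bare continuous injection of $\LCA$ groups: $f$ injective gives $f^*$ surjective only when $f$ is a strict closed embedding (compare $\Z\hookrightarrow\hat{\Z}$, whose dual $\Q/\Z\to\mathbb{T}$ is not surjective), and nothing in the non-degeneracy statement for $\s{F}_1$ tells you that the image of the discrete torsion group $H^{2}(k,\s{F}_{1})$ is closed in $H^{0}(k,\s{F}_{1}^{\D})^*$ with the subspace topology; establishing that is essentially equivalent to the surjectivity you are trying to prove. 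The paper closes this case differently: it proves the \emph{perfect} pairing $H^0(k,\s{F}_1^{\D})^\wedge\times H^{2}(k,\s{F}_1)\to\Q/\Z$ of Proposition \ref{perfect pairing 1}.a) directly (and forward-references it), by applying the topological five-lemma of \cite[Theorem 4.1]{topart} to the profinitely completed diagram \eqref{case02 i1}, in which all four outer vertical maps are topological isomorphisms by Tate duality for the graded pieces $\s{G}_0,\s{G}_1$. Your ``sharpened inductive hypothesis'' is the right instinct, but it must record perfectness after completion (topological isomorphism), not merely injectivity, and that stronger statement has to be proved by the five-lemma in $\LCA$ rather than deduced from the injectivity of a transpose.
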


\begin{proof}	
	We start proving that the middle vertical maps $\gamma:H^{2-r}(k,\s{F}_i)\to H^r(k,\s{F}_{i}^{\D})^*$ and $\gamma':H^r(k,\s{F}_{i}^{\D})\to H^{2-r}(k,\s{F}_i)^*$ in \eqref{case1} and \eqref{case2}, respectively, are injective. By Proposition \ref{tate}, the Yoneda pairing for $\s{F}_0=\s{G}_0=\G_m$ is non-degenerate. Hence, using a recursive argument, we may assume that all vertical arrows but the middle one of \eqref{case1} and \eqref{case2} are injective. Observe that, since all boundary maps have finite image (see Lemma \ref{boundary}), we may replace $\alpha$ (resp. $\alpha'$) by $\alpha^\wedge$ (resp. $\alpha'^\wedge$) without disrupting neither the exactness nor the commutativity, whenever the target of $\alpha$ (resp. $\alpha'$) is profinite (see Lemma \ref{profinite}). Thus, $\gamma$ will be injective under one of the following conditions:
	\begin{enumerate}[nosep, label=(\arabic*)]
		\item $\alpha$ (resp. $\alpha'$) is an isomorphism.
		\begin{equation} \label{case1alfa}
			\begin{tikzcd}[column sep=1.5em]
				H^{1-r}(k,\s{G}_i) \ar{r}{\partial} \ar{d}{\alpha} & H^{2-r}(k,\s{F}_{i-1}) \ar{r} \ar{d}{\beta} & H^{2-r}(k,\s{F}_i) \ar{r} \ar{d}{\gamma} & H^{2-r}(k,\s{G}_i) \ar{r}{\partial} \ar{d}{\delta} & H^{3-r}(k,\s{F}_{i-1}) \ar{d}{\epsilon} \\
				H^{r+1}(k,\s{G}_i^{\D})^* \ar{r} & H^r(k,\s{F}_{i-1}^{\D})^* \ar{r} & H^r(k,\s{F}_{i}^{\D})^* \ar{r} & H^r(k,\s{G}_{i}^{\D})^* \ar{r} & H^{r-1}(k,\s{F}_{i-1}^{\D})^* 
			\end{tikzcd}
		\end{equation}
		\begin{equation} \label{case2alfa}
			\begin{tikzcd}[column sep=1.5em]
				H^{r-1}(k,\s{F}_{i-1}^{\D}) \ar{r}{\partial'} \ar{d}{\alpha'} & H^r(k,\s{G}_{i}^{\D}) \ar{r} \ar{d}{\beta'} & H^r(k,\s{F}_{i}^{\D}) \ar{r} \ar{d}{\gamma'} & H^r(k,\s{F}_{i-1}^{\D}) \ar{r}{\partial'} \ar{d}{\delta'} & H^{r+1}(k,\s{G}_i^{\D}) \ar{d}{\epsilon'} \\
				H^{3-r}(k,\s{F}_{i-1})^* \ar{r} & H^{2-r}(k,\s{G}_i)^* \ar{r} & H^{2-r}(k,\s{F}_i)^* \ar{r} & H^{2-r}(k,\s{F}_{i-1})^* \ar{r} & H^{1-r}(k,\s{G}_i)^*,
			\end{tikzcd}
		\end{equation}
		\item $\alpha^\wedge$ (resp. $\alpha'^\wedge$) is an isomorphism when the target of $\alpha$ (resp. $\alpha'$) is profinite.
		\begin{equation} \label{case1pro}
			\begin{tikzcd}[column sep=1.5em]
				H^{1-r}(k,\s{G}_i)^\wedge \ar{r}{\partial^\wedge} \ar{d}{\alpha^\wedge} & H^{2-r}(k,\s{F}_{i-1}) \ar{r} \ar{d}{\beta} & H^{2-r}(k,\s{F}_i) \ar{r} \ar{d}{\gamma} & H^{2-r}(k,\s{G}_i) \ar{r}{\partial} \ar{d}{\delta} & H^{3-r}(k,\s{F}_{i-1}) \ar{d}{\epsilon} \\
				H^{r+1}(k,\s{G}_i^{\D})^* \ar{r} & H^r(k,\s{F}_{i-1}^{\D})^* \ar{r} & H^r(k,\s{F}_{i}^{\D})^* \ar{r} & H^r(k,\s{G}_{i}^{\D})^* \ar{r} & H^{r-1}(k,\s{F}_{i-1}^{\D})^* 
			\end{tikzcd}
		\end{equation}
		\begin{equation} \label{case2pro}
			\begin{tikzcd}[column sep=1.5em]
				H^{r-1}(k,\s{F}_{i-1}^{\D})^\wedge \ar{r}{\partial'^\wedge} \ar{d}{\alpha'^\wedge} & H^r(k,\s{G}_{i}^{\D}) \ar{r} \ar{d}{\beta'} & H^r(k,\s{F}_{i}^{\D}) \ar{r} \ar{d}{\gamma'} & H^r(k,\s{F}_{i-1}^{\D}) \ar{r}{\partial'} \ar{d}{\delta'} & H^{r+1}(k,\s{G}_i^{\D}) \ar{d}{\epsilon'} \\
				H^{3-r}(k,\s{F}_{i-1})^* \ar{r} & H^{2-r}(k,\s{G}_i)^* \ar{r} & H^{2-r}(k,\s{F}_i)^* \ar{r} & H^{2-r}(k,\s{F}_{i-1})^* \ar{r} & H^{1-r}(k,\s{G}_i)^*,
			\end{tikzcd}
		\end{equation}
		\item $\beta$ (resp. $\beta'$) has trivial source and target.
	\end{enumerate}
	
	For $i=1$, on one hand, in diagram \eqref{case1alfa}, for $r=-1,1,3$ and $r=2$ the source and target of $\beta$ and $\alpha$, respectively, are trivial and, therefore, condition $(3)$ and $(1)$ are satisfied, respectively. For $r=0$, in diagram \eqref{case1pro}, condition $(2)$ is satisfied by Proposition \ref{tate}. Hence, $\gamma$ is injective in this case. On the other hand, in diagram \eqref{case2alfa}, for $r=-1,0,2$ and 3, the source and target of $\alpha'$ and $\beta'$ respectively are trivial and, therefore, conditions $(1)$ and $(3)$ are satisfied, respectively. For $r=1$, in diagram \eqref{case2pro}, condition $(2)$ is satisfied by Proposition \ref{tate}. Hence, $\gamma'$ is injective in this case.
	
	For $i=2$, on one hand, in diagram \eqref{case1alfa}, for $r=1,2,3$ the condition $(1)$ is satisfied since the source and target of $\alpha$ are trivial. For $r=-1$, $\alpha$ is an isomorphism by Proposition \ref{tate}. For $r=0$, in diagram \eqref{case1pro}, the condition $(2)$ is satisfied by Proposition \ref{tate}. Hence, $\gamma$ is injective in this case. On the other hand, in diagram \eqref{case2alfa}, for $r=2,3$ and $-1$ the source and target of $\beta'$ and $\alpha'$, respectively, are trivial (see Corollary \ref{tatecoro}) and, therefore, conditions $(3)$ and $(1)$ are satisfied, respectively. For $r=0$, in diagram \eqref{case1pro}, condition $(2)$ is satisfied by Corollary \ref{tatecoro} and Proposition \ref{tate}. For $r=1$, in diagram \eqref{case2pro}, condition $(2)$ is satisfied by the point $a)$ of Proposition \ref{perfect pairing 1}, whose proof will be given below.
	
	For $r\notin\{-1,0,1,2,3\}$ the cohomology groups in the pairing are trivial by Lemma \ref{tatecoro}. In this way, we deduce the non-degenerateness of the pairing.
\end{proof}

\begin{proposition} \label{perfect pairing 1}
	The Yoneda pairing for the filtration pieces $\s{F}_i$
	\[ H^r(k,\s{F}_i^{\D}) \times H^{2-r}(k,\s{F}_i) \to H^2(k,\G_{m,k})\cong\Q/\Z \]
	induces continuous perfect pairings between the following profinite abelian groups and discrete abelian torsion groups, respectively:
	\begin{enumerate}[nosep, label=\alph*)]
		\item $H^0(k,\s{F}_i^{\D})^\wedge$ and $H^{2}(k,\s{F}_i)$;
		\item $H^{-1}(k,\s{F}_i^{\D})^\wedge$ and $H^{3}(k,\s{F}_i)$.
	\end{enumerate}
\end{proposition}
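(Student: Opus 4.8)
The plan is to prove both assertions simultaneously by induction on $i\in\{0,1,2\}$, feeding the strict exact commutative diagrams \eqref{case1} and \eqref{case2} into the duality already established for the graded pieces in Proposition \ref{tate}. The base case $i=0$ is immediate: since $\s{F}_0=\s{G}_0=\G_{m,k}$, assertion a) is exactly Proposition \ref{tate}a) applied to $\s{G}_0$, whereas in assertion b) both groups $H^{-1}(k,\s{F}_0^{\D})=H^{-1}(k,\Z)$ and $H^3(k,\s{F}_0)=H^3(k,\G_{m,k})$ vanish, so the pairing is trivially perfect.

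For the inductive step I would work inside diagram \eqref{case2}, taking $r=0$ for part a) and $r=-1$ for part b). In both windows the middle vertical map is exactly the pairing-induced homomorphism $\gamma':H^r(k,\s{F}_i^{\D})\to H^{2-r}(k,\s{F}_i)^*$ whose profinite completion I must show is an isomorphism. The outer vertical maps are controlled as follows: the maps $\beta'$ and $\epsilon'$ attached to the graded piece $\s{G}_i$ become an isomorphism (resp. an injection) after completion by Proposition \ref{tate}, while the maps $\alpha'$ and $\delta'$ attached to $\s{F}_{i-1}$ become isomorphisms after completion by the inductive hypothesis. Here parts a) and b) feed into one another — for instance the map $\alpha'$ in the $r=0$ window is the datum of part b) for $\s{F}_{i-1}$ — which is why the induction must be carried out for both statements at once.

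The technical heart of the argument is to complete the top row of \eqref{case2} without destroying exactness. Profinite completion is only right exact in general, but by Lemma \ref{boundary} every boundary map occurring in \eqref{pairing} has finite image, so Lemma \ref{profinite} lets me factor each such map through the completion of its source and thereby replace the relevant terms by their profinite completions while preserving both exactness and commutativity; this is precisely the maneuver already used in the proof of Proposition \ref{non-degeneracy}. The bottom row of \eqref{case2} is the Pontryagin dual of the strict exact sequence of Hausdorff, locally compact groups arising from the triangles \eqref{triangle1} and \eqref{triangle2} (Propositions \ref{top fil} and \ref{pontryagin exactness}), and is therefore exact. With $\beta'^{\wedge}$ and $\delta'^{\wedge}$ isomorphisms, $\alpha'^{\wedge}$ an epimorphism and $\epsilon'$ a monomorphism, the five lemma forces $\gamma'^{\wedge}$ to be a bijection.

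Finally, to upgrade this algebraic bijection to a perfect pairing I would observe that $H^r(k,\s{F}_i^{\D})^{\wedge}$ is profinite by construction, and that $H^{2-r}(k,\s{F}_i)^*$ is profinite as the Pontryagin dual of the discrete torsion group $H^{2-r}(k,\s{F}_i)$ (Proposition \ref{torsion} and Fact \ref{Pontryagin dual}); since $\gamma'^{\wedge}$ is a continuous (Proposition \ref{continuity yoneda}) bijection between compact Hausdorff groups it is automatically a topological isomorphism, and the companion statement about the reverse map follows by Pontryagin duality. I expect the main obstacle to be exactly the bookkeeping that keeps the completed top row exact: one must check, case by case and using the vanishing ranges of Corollary \ref{vanish} and Proposition \ref{vanishing graded} together with Lemma \ref{boundary}, that all boundary maps entering the relevant five-term windows have finite image so that the completion trick applies, and one must remember that $\epsilon'$ need only be injective (rather than an isomorphism) for the five lemma in the direction used here.
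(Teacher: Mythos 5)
Your proposal is correct, and for part a) it is essentially the paper's own proof: induction on $i$, the $r=0$ window of diagram \eqref{case2}, finiteness of the boundary images (Lemma \ref{boundary}) plus Lemma \ref{profinite} and Remark \ref{exactness profinite} to complete the top row without losing exactness, and the (topological) five lemma with $\beta'^{\wedge},\delta'^{\wedge}$ isomorphisms, $\alpha'^{\wedge}$ epi and $\epsilon'$ mono. Two points differ from the paper. First, for part b) you run the same five-lemma induction in the $r=-1$ window, whereas the paper short-circuits this entirely: by Corollary \ref{tatecoro} the maps $\s{F}_1\to\s{G}_1$ and $\s{G}_1^{\D}\to\s{F}_1^{\D}$ induce isomorphisms $H^3(k,\s{G}_1)\cong H^3(k,\s{F}_1)$ and $H^{-1}(k,\s{F}_1^{\D})\cong H^{-1}(k,\s{G}_1^{\D})$, so the pairing in b) is literally the one of Proposition \ref{tate}. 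Your route also works (for $i=1$ the window degenerates because $H^{-1}(k,\s{F}_0^{\D})$ and $H^3(k,\s{F}_0)$ vanish, and for $i=2$ the relevant outer terms are finite or discrete finitely generated, so the completion trick applies), but it is longer; its one advantage is that it makes the simultaneous a)/b) induction self-contained without appealing to Corollary \ref{tatecoro}. Second, for the reverse induced map $H^{2-r}(k,\s{F}_i)\to\bigl(H^r(k,\s{F}_i^{\D})^{\wedge}\bigr)^*$ you invoke Pontryagin duality between profinite and discrete torsion groups, whereas the paper runs a second five-lemma argument on diagram \eqref{case1}; your shortcut is legitimate (given Proposition \ref{torsion} and Fact \ref{Pontryagin dual}, a topological isomorphism in one direction dualizes to the other) and is mildly more economical.
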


\begin{proof}
	By Proposition \ref{continuity yoneda}, the Yoneda pairing 
	\[ H^r(k,\s{F}_i^{\D}) \times H^{2-r}(k,\s{F}_i) \to \Q/\Z \]
	is continuous. Moreover, in this pairing, there is always a discrete group, whence we obtain the pairings $a)$ and $b)$. Now, we will prove the perfection of these new pairings.
	\begin{enumerate}[nosep, label=\alph*)]
		\item We have to check the perfection of the pairing
		\[ H^0(k,\s{F}_i^{\D})^\wedge \times H^{2}(k,\s{F}_i) \to \Q/\Z, \]
		induced by the Yoneda pairing. In order to do that, we have to check that 
		\begin{enumerate}[nosep, label=\roman*.]
			\item $\gamma:H^2(k,\s{F}_i)\to H^{0}(k,\s{F}_i^{\D})^*$, in diagram \eqref{case1} for $r=0$, is bijective; and
			\item $\gamma'^\wedge: H^{0}(k,\s{F}_i^{\D})^\wedge \to H^2(k,\s{F}_i)^*$, induced by $\gamma'$ in diagram \eqref{case2} for $r=0$, is bijective.
		\end{enumerate}
		For $i=0$, the perfection of the pairing follows from Proposition \ref{tate}.
		
		For $i=1$, diagram \eqref{case2} becomes the following commutative diagram of abelian groups
		\begin{equation} \label{case02 i1}
			\begin{tikzcd}[column sep=1.5em]
				0 \ar{r} & H^0(k,\s{G}_{1}^{\D}) \ar{r} \ar{d}{\beta'} & H^0(k,\s{F}_{1}^{\D}) \ar{r} \ar{d}{\gamma'} & H^0(k,\s{F}_{0}^{\D}) \ar{r}{\partial'} \ar{d}{\delta'} & H^{1}(k,\s{G}_1^{\D}) \ar{d}{\epsilon'} \\
				0 \ar{r} & H^{2}(k,\s{G}_1)^* \ar{r} & H^{2}(k,\s{F}_1)^* \ar{r} & H^{2}(k,\s{F}_{0})^* \ar{r} & H^{1}(k,\s{G}_1)^*.
			\end{tikzcd}
		\end{equation}
		whose rows are strict exact. Moreover, the image of $\partial'$ is finite by Lemma \ref{boundary}, and $H^0(k,\s{F}_{0}^{\D})\cong\Z$ is discrete and finitely generated. Hence, by Lemma \ref{profinite} and Remark \ref{exactness profinite}, diagram \eqref{case02 i1} induces the following commutative diagram of abelian groups
		\begin{equation*}
			\begin{tikzcd}[column sep=1.5em]
				0 \ar{r} & H^0(k,\s{G}_{1}^{\D})^\wedge \ar{r} \ar{d}{\beta'^\wedge} & H^0(k,\s{F}_{1}^{\D})^\wedge \ar{r} \ar{d}{\gamma'^\wedge} & H^0(k,\s{F}_{0}^{\D})^\wedge \ar{r}{\partial'^\wedge} \ar{d}{\delta'^\wedge} & H^{1}(k,\s{G}_1^{\D}) \ar{d}{\epsilon'} \\
				0 \ar{r} & H^{2}(k,\s{G}_1)^* \ar{r} & H^{2}(k,\s{F}_1)^* \ar{r} & H^{2}(k,\s{F}_{0})^* \ar{r} & H^{1}(k,\s{G}_1)^*.
			\end{tikzcd}
		\end{equation*}
		whose rows are strict exact. By Proposition \ref{tate}, $\beta'^\wedge,\delta'^\wedge$ and $\epsilon'$ are topological isomorphisms. Hence, the map $\gamma'^\wedge$ is a topological isomorphism (see \cite[Theorem 4.1]{topart}). Finally, diagram \eqref{case1} gives us the following commutative diagram of abelian groups
		\begin{equation*}
			\begin{tikzcd}[column sep=1.5em]
				H^1(k,\s{G}_1) \ar{r}{\partial} \ar{d}{\alpha} & H^{2}(k,\s{F}_{0}) \ar{r} \ar{d}{\beta} & H^{2}(k,\s{F}_1) \ar{r} \ar{d}{\gamma} & H^{2}(k,\s{G}_1) \ar{r} \ar{d}{\delta} & 0 \\
				H^1(k,\s{G}_1^{\D})^* \ar{r} & H^0(k,\s{F}_{0}^{\D})^* \ar{r} & H^0(k,\s{F}_{1}^{\D})^* \ar{r} & H^0(k,\s{G}_{1}^{\D})^* \ar{r} & 0,
			\end{tikzcd}
		\end{equation*}
		whose rows are strict exact. The image of $\partial'$ is finite by Lemma \ref{boundary}. Then, by Lemma \ref{profinite}, we can change $\alpha$ by $\alpha^\wedge$ in the diagram above and obtain the following commutative diagram of abelian groups
		\begin{equation*}
			\begin{tikzcd}[column sep=1.5em]
				H^1(k,\s{G}_1)^\wedge \ar{r}{\partial} \ar{d}{\alpha^\wedge} & H^{2}(k,\s{F}_{0}) \ar{r} \ar{d}{\beta} & H^{2}(k,\s{F}_1) \ar{r} \ar{d}{\gamma} & H^{2}(k,\s{G}_1) \ar{r} \ar{d}{\delta} & 0 \\
				H^1(k,\s{G}_1^{\D})^* \ar{r} & H^0(k,\s{F}_{0}^{\D})^* \ar{r} & H^0(k,\s{F}_{1}^{\D})^* \ar{r} & H^0(k,\s{G}_{1}^{\D})^* \ar{r} & 0,
			\end{tikzcd}
		\end{equation*}
		whose rows are strict exact. By Proposition \ref{tate}, $\alpha^\wedge,\ \beta$ and $\delta$ are isomorphisms and therefore $\gamma$ is an isomorphism. 
		
		For $i=2$, we have the following commutative diagram of abelian groups
		\begin{equation} \label{case02 i2} 
			\begin{tikzcd}[column sep=1.5em]
				H^{-1}(k,\s{F}_{1}^{\D}) \ar{r}{\partial'} \ar{d}{\alpha'} & H^0(k,\s{G}_{2}^{\D}) \ar{r} \ar{d}{\beta'} & H^0(k,\s{F}_{2}^{\D})_{\Haus} \ar{r} \ar{d}{\gamma'} & H^0(k,\s{F}_{1}^{\D})_{\Haus} \ar{r}{\partial'} \ar{d}{\delta'} & H^{1}(k,\s{G}_2^{\D}) \ar{d}{\epsilon'} \\
				H^{3}(k,\s{F}_{1})^* \ar{r} & H^{2}(k,\s{G}_2)^* \ar{r} & H^{2}(k,\s{F}_2)^* \ar{r} & H^{2}(k,\s{F}_{1})^* \ar{r} & H^{1}(k,\s{G}_2)^*.
			\end{tikzcd}
		\end{equation}
		whose rows are strict exact. Moreover, the image of $\partial'$ is finite by Lemma \ref{boundary}, $H^0(k,\s{G}_{2}^{\D})$ is finite and $H^{1}(k,\s{G}_2^{\D})$ is discrete and finitely generated. Hence, by Lemma \ref{profinite} and Remark \ref{exactness profinite}, the diagram \eqref{case02 i2} induces the following commutative diagram of abelian groups
		\begin{equation*}  
			\begin{tikzcd}[column sep=1.5em]
				H^{-1}(k,\s{F}_{1}^{\D})^\wedge \ar{r}{\partial'} \ar{d}{\alpha'^\wedge} & H^0(k,\s{G}_{2}^{\D}) \ar{r} \ar{d}{\beta'} & H^0(k,\s{F}_{2}^{\D})^\wedge \ar{r} \ar{d}{\gamma'^\wedge} & H^0(k,\s{F}_{1}^{\D})^\wedge \ar{r}{\partial'} \ar{d}{\delta'^\wedge} & H^{1}(k,\s{G}_2^{\D}) \ar{d}{\epsilon'} \\
				H^{3}(k,\s{F}_{1})^* \ar{r} & H^{2}(k,\s{G}_2)^* \ar{r} & H^{2}(k,\s{F}_2)^* \ar{r} & H^{2}(k,\s{F}_{1})^* \ar{r} & H^{1}(k,\s{G}_2)^*.
			\end{tikzcd}
		\end{equation*}
		whose rows are strict exact. By Proposition \ref{tate} and the previous case (for $i=1$), $\alpha'^\wedge,\ \beta',\ \delta'^\wedge$ and $\epsilon'^\wedge$ are topological isomorphisms. Hence, the map $\gamma'^\wedge$ is a topological isomorphism (see \cite[Theorem 4.1]{topart}). Finally, as in the previous case, we also conclude that the map $H^2(k,\s{F}_2) \to H^{0}(k,\s{F}_2^{\D})^*$ is an isomorphism.
		\item By Corollary \ref{tatecoro}, $H^{-1}(k,\s{F}_1^{\D})\cong H^{-1}(k,\s{G}_1^{\D})$ and $H^3(k,\s{G}_1)\cong H^3(k,\s{F}_1)$ for every $i\in\{0,1,2\}$. Hence, the pairing
		\[ H^{-1}(k,\s{F}_i^{\D}) \times H^3(k,\s{F}_i)^\wedge \to \Q/\Z, \]
		induced by the Yoneda pairing is continuous and perfect by Proposition \ref{tate}.
	\end{enumerate}
\end{proof}

\begin{proposition} \label{perfect pairing 2}
	The Yoneda pairing for the filtration pieces $\s{F}_i$
	\[ H^r(k,\s{F}_i^{\D}) \times H^{2-r}(k,\s{F}_i) \to H^2(k,\G_{m,k})\cong\Q/\Z \]
	induces continuous perfect pairings between the following discrete abelian torsion groups and profinite abelian groups, respectively:
	\begin{enumerate}[nosep, label=\alph*), start=3]
		\item $H^2(k,\s{F}_i^{\D})$ and $H^{0}(k,\s{F}_i)^\wedge$;
		\item $H^1(k,\s{F}_i^{\D})$ and $H^1(k,\s{F}_i)^{\wedge}$.
	\end{enumerate}
\end{proposition}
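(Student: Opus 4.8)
The plan is to follow the template of Proposition \ref{perfect pairing 1}. The pairing is already continuous by Proposition \ref{continuity yoneda} and non-degenerate by Proposition \ref{non-degeneracy}, and in both cases c) and d) one factor, namely $H^{r}(k,\s{F}_i^{\D})$ with $r=2$, resp.\ $r=1$, is discrete by Proposition \ref{top dual fil} and torsion by Proposition \ref{torsion}, while the other factor $H^{2-r}(k,\s{F}_i)^\wedge$ is profinite by construction. By Fact \ref{Pontryagin dual}.c) the Pontryagin dual of either kind of group is computed by $\Hom_\cts(-,\Q/\Z)$, and since the pairing takes values in $\Q/\Z$ one has $\Hom_\cts(H^{2-r}(k,\s{F}_i),\Q/\Z)=\Hom_\cts(H^{2-r}(k,\s{F}_i)^\wedge,\Q/\Z)$; thus completing the profinite factor does not change the relevant dual. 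Consequently it remains only to show that the two maps induced by the pairing, $\gamma'\colon H^{r}(k,\s{F}_i^{\D})\to(H^{2-r}(k,\s{F}_i)^\wedge)^*$ and $\gamma^\wedge\colon H^{2-r}(k,\s{F}_i)^\wedge\to H^{r}(k,\s{F}_i^{\D})^*$, are topological isomorphisms.

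I would prove this by induction on $i\in\{0,1,2\}$, exactly as in Proposition \ref{perfect pairing 1}. The engine is the pair of commutative ladders \eqref{case1} and \eqref{case2} for the appropriate value of $r$: one places the middle column (carrying $\gamma$, resp.\ $\gamma'$) between four flanking vertical maps, applies profinite completion to the terms that are locally compact but not profinite while leaving the discrete torsion terms untouched, and invokes the topological five-lemma \cite[Theorem 4.1]{topart}. Completion preserves the exactness of the rows because, by Lemma \ref{boundary}, every boundary map appearing in \eqref{case1} and \eqref{case2} has finite image; Lemma \ref{profinite} then lets each such map factor through the completion of its source without disturbing commutativity, and the already-discrete targets are unaffected. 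The flanking maps are isomorphisms for one of three reasons: they are graded-piece dualities handled by Proposition \ref{tate} (in both the stated and the interchanged form), they are the cases a) or b) of Proposition \ref{perfect pairing 1} applied to $\s{F}_{i-1}$, or they are the inductive hypothesis for $\s{F}_{i-1}$.

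I would first dispatch case d) ($r=1$), whose induction closes on itself. The base case $i=0$ is immediate since $H^1(k,\s{F}_0)=H^1(k,\s{F}_0^{\D})=0$ by Corollary \ref{vanish}. For the inductive step, the column of \eqref{case2} carrying $\gamma'$ is flanked by $\alpha'^\wedge$ (an isomorphism by Proposition \ref{perfect pairing 1}.a) for $\s{F}_{i-1}$), by $\beta'$ and $\epsilon'$ (isomorphisms by Proposition \ref{tate}.b) and \ref{tate}.a) for the graded piece $\s{G}_i$), and by $\delta'$ (the inductive hypothesis); the column of \eqref{case1} then yields $\gamma^\wedge$ in the same way. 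Having established d), I would treat case c) ($r=2$). Its base case $i=0$ is the interchanged pairing $(k^*)^\wedge\times H^2(k,\Z)\to\Q/\Z$ of Proposition \ref{tate}.a), which is perfect by local class field theory. In the inductive step the top row of \eqref{case2} at $r=2$ is entirely discrete torsion, so $\gamma'$ is an isomorphism directly from the five-lemma, with flanking maps given by a graded-piece duality (Proposition \ref{tate}), the case d) perfection at index $i-1$ just proved, and the case c) perfection at index $i-1$; the dual direction $\gamma^\wedge$ comes from \eqref{case1} after completing the terms $H^0(k,\s{F}_\bullet)$ and $H^1(k,\s{F}_\bullet)$.

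The step I expect to be most delicate is the bookkeeping of the completions: one must complete precisely the non-profinite terms $H^0(k,\s{F}_\bullet)\cong k^*$ and $H^1(k,\s{F}_\bullet)$ while leaving the discrete torsion terms alone, since completing a divisible discrete torsion group would annihilate it. This is exactly where the finiteness of the boundary images (Lemma \ref{boundary}) is essential, and it is also the point at which the identification $\Hom_\cts(-,\Q/\Z)=\Hom_\cts((-)^\wedge,\Q/\Z)$ from the first paragraph must be used to reconcile the uncompleted duals written in \eqref{case1} and \eqref{case2} with the completed groups appearing in the statement. A minor additional subtlety, inherited through the flanking case a) terms, is the possible non-Hausdorffness of $H^0(k,\s{F}_1^{\D})$ and $H^0(k,\s{F}_2^{\D})$; this is harmless because profinite completion factors through the Hausdorff quotient, so that $H^0(k,\s{F}_\bullet^{\D})^\wedge=(H^0(k,\s{F}_\bullet^{\D})_{\Haus})^\wedge$. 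Beyond these topological points, the argument is a formal diagram chase identical in structure to the preceding proposition.
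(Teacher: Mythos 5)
Your proposal is correct and follows essentially the same strategy as the paper: induction on $i$ via the ladders \eqref{case1} and \eqref{case2}, profinite completion justified by the finite images of the boundary maps (Lemma \ref{boundary}) together with Lemma \ref{profinite}, and the topological five-lemma, with the base cases supplied by Proposition \ref{tate}. The only cosmetic difference is in part c), where the paper skips the induction entirely by observing via Corollary \ref{tatecoro} that $H^0(k,\s{F}_i)\cong H^0(k,\s{F}_0)=k^*$ and $H^2(k,\s{F}_i^{\D})\cong H^2(k,\s{F}_0^{\D})$, so the pairing reduces at once to the $i=0$ case; your inductive version of c) degenerates to the same reduction since the flanking graded-piece terms vanish.
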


\begin{proof}
	By Proposition \ref{continuity yoneda}, the Yoneda pairing 
	\[ H^r(k,\s{F}_i^{\D}) \times H^{2-r}(k,\s{F}_i) \to \Q/\Z \]
	is continuous. Moreover, in this pairing, there is always a discrete group, whence we obtain the pairings $c)$ and $d)$. Now, we will prove the perfection of these new pairings.
	\begin{enumerate}[nosep, label=\alph*), start=3]
		\item By Corollary \ref{tatecoro}, $H^{0}(k,\s{F}_i)\cong H^0(k,\s{G}_0)$ and $H^{2}(k,\s{F}_i^{\D})\cong H^2(k,\s{G}_0^{\D})$ for every $i\in\{0,1,2\}$. Hence, the pairing 
		\[ H^{2}(k,\s{F}_i^{\D}) \times H^0(k,\s{F}_i)^\wedge \to \Q/\Z, \]
		induced by the Yoneda pairing is continuous and perfect by Proposition \ref{tate}.
		\item We have to check the perfection of the pairing
		\[ H^1(k,\s{F}_i^{\D}) \times H^{1}(k,\s{F}_i)^\wedge \to \Q/\Z, \]
		induced by the Yoneda pairing. In order to do that, we have to check that 
		\begin{enumerate}[nosep, label=\roman*.]
			\item $\gamma:H^1(k,\s{F}_i)\to H^1(k,\s{F}_i^{\D})^*$, in diagram \eqref{case1} for $r=1$, is bijective; and
			\item $\gamma'^\wedge: H^1(k,\s{F}_i^{\D})^\wedge \to H^1(k,\s{F}_i)^*$, induced by $\gamma'$ in diagram \eqref{case2} for $r=1$, is bijective.
		\end{enumerate}
		
		For $i=0$ the map $H^{1}(k,\s{F}_0)^\wedge \to H^1(k,\s{F}_0^{\D})^*$ is an isomorphism trivially because both groups are trivial (Hilbert's 90 theorem and $H^1(k,\Z)=0$).
		
		For $i=1$, we have the commutative diagram of abelian groups
		\begin{equation*}
			\begin{tikzcd}[column sep=1.5em]
				0 \ar[r] & H^{1}(k,\s{F}_1) \ar[r] \ar{d}{\gamma} & H^{1}(k,\s{G}_1) \ar{r}{\partial} \ar{d}{\delta} & H^{2}(k,\s{F}_{0}) \ar{d}{\epsilon} \\
				0 \ar[r] & H^1(k,\s{F}_{1}^{\D})^* \ar[r] & H^1(k,\s{G}_{1}^{\D})^* \ar[r] & H^{0}(k,\s{F}_{0}^{\D})^*,
			\end{tikzcd}
		\end{equation*}
		whose rows are strict exact. By Lemma \ref{boundary}, the boundary map $\partial$ has finite image. Then, by Lemma \ref{profinite} and Remark \ref{exactness profinite}, we obtain a commutative diagram of abelian groups
		\begin{equation} \label{case11 i1}
			\begin{tikzcd}[column sep=1.5em]
				0 \ar[r] & H^{1}(k,\s{F}_1)^\wedge \ar[r] \ar{d}{\gamma^\wedge} & H^{1}(k,\s{G}_1)^\wedge \ar{r}{\partial^\wedge} \ar{d}{\delta^\wedge} & H^{2}(k,\s{F}_{0}) \ar{d}{\epsilon} \\
				0 \ar[r] & H^1(k,\s{F}_{1}^{\D})^* \ar[r] & H^1(k,\s{G}_{1}^{\D})^* \ar[r] & H^{0}(k,\s{F}_{0}^{\D})^*,
			\end{tikzcd}
		\end{equation}
		whose rows are strict exact. By Proposition \ref{tate}, the maps $\delta^\wedge$ and $\epsilon$ are topological isomorphisms, therefore $\gamma^\wedge$ is a topological isomorphism (see \cite[Theorem 4.1]{topart}). Finally, applying Pontryagin dual to diagram \eqref{case11 i1}, we conclude that $H^1(k,\s{F}_1^D) \to (H^{1}(k,\s{F}_1)^\wedge)^*$ is also an isomorphism.
		
		For $i=2$, we have the following commutative diagram of abelian groups
		\begin{equation*}
			\begin{tikzcd}[column sep=1.5em]
				0 \ar{r} & H^{1}(k,\s{F}_{1}) \ar{r} \ar{d}{\beta} & H^{1}(k,\s{F}_2) \ar{r} \ar{d}{\gamma} & H^{1}(k,\s{G}_2) \ar{r}{\partial} \ar{d}{\delta} & H^{2}(k,\s{F}_{1}) \ar{d}{\epsilon} \\
				0 \ar{r} & H^1(k,\s{F}_{1}^{\D})^* \ar{r} & H^1(k,\s{F}_{2}^{\D})^* \ar{r} & H^1(k,\s{G}_{2}^{\D})^* \ar{r} & H^{0}(k,\s{F}_{1}^{\D})^* 
			\end{tikzcd}
		\end{equation*}
		whose rows are strict exact. The boundary map $\partial$ has finite image by Lemma \ref{boundary}, and $H^{1}(k,\s{G}_2)$ is discrete and finitely generated. Then, by Lemma \ref{profinite} and Remark \ref{exactness profinite}, we get the following commutative diagram
		\begin{equation*}
			\begin{tikzcd}[column sep=1.5em]
				0 \ar{r} & H^{1}(k,\s{F}_{1})^\wedge \ar{r} \ar{d}{\beta^\wedge} & H^{1}(k,\s{F}_2)^\wedge \ar{r} \ar{d}{\gamma^\wedge} & H^{1}(k,\s{G}_2)^\wedge \ar{r}{\partial^\wedge} \ar{d}{\delta^\wedge} & H^{2}(k,\s{F}_{1}) \ar{d}{\epsilon} \\
				0 \ar{r} & H^1(k,\s{F}_{1}^{\D})^* \ar{r} & H^1(k,\s{F}_{2}^{\D})^* \ar{r} & H^1(k,\s{G}_{2}^{\D})^* \ar{r} & H^{0}(k,\s{F}_{1}^{\D})^*,
			\end{tikzcd}
		\end{equation*}
		whose rows are strict exact. By the previous case ($i=1$) and Proposition \ref{tate}, $\beta^\wedge, \delta^\wedge$ and $\epsilon$ are topological isomorphisms, therefore, $\gamma^\wedge$ is also a topological isomorphism. As the previous point, applying Pontryagin dual, we conclude that $H^1(k,\s{F}_2^D) \to (H^{1}(k,\s{F}_2)^\wedge)^*$ is also an isomorphism. Whence we conclude the perfection and continuity of the Yoneda pairing
		\[ H^{1}(k,\s{F}_2)^\wedge \times H^1(k,\s{F}_2^{\D}) \to \Q/\Z. \]
	\end{enumerate}
	In this way, we finish the proof of the theorem.
\end{proof}

\begin{remark}
	Observe that $H^0(k,\s{F}_2^{\D})^\wedge$ does not depend on the chosen topology $\tau$. Indeed, $H^0(k,\s{F}_2^{\D})^\wedge$ is isomorphic to $(H^0(k,\s{F}_2^{\D})_{\Haus})^\wedge$ and, by Proposition \ref{top dual fil}, $H^0(k,\s{F}_2^{\D})_{\Haus}$ does not depend on $\tau$.
\end{remark}

Combining Propositions \ref{continuity yoneda}, \ref{non-degeneracy}, \ref{perfect pairing 1} and \ref{perfect pairing 2}, the main result of this thesis is obtained.

\begin{theorem} \label{Tate}
	The Yoneda pairing for the filtration pieces $\s{F}_i$
	\[ H^r(k,\s{F}_i^{\D}) \times H^{2-r}(k,\s{F}_i) \to H^2(k,\G_{m,k})\cong\Q/\Z \]
	is non-degenerate for every $i\in\{0,1,2\}$ and $r\in\Z$. Moreover, this pairing induces continuous perfect pairings between the following profinite abelian groups and discrete abelian torsion groups, respectively:
	\begin{enumerate}[nosep, label=\alph*)]
		\item $H^0(k,\s{F}_i^{\D})^\wedge$ and $H^{2}(k,\s{F}_i)$;
		\item $H^{-1}(k,\s{F}_i^{\D})^\wedge$ and $H^{3}(k,\s{F}_i)$.
	\end{enumerate}
	and induces continuous perfect pairings between the following discrete abelian torsion groups and profinite abelian groups, respectively:
	\begin{enumerate}[nosep, label=\alph*), start=3]
		\item $H^2(k,\s{F}_i^{\D})$ and $H^{0}(k,\s{F}_i)^\wedge$;
		\item $H^1(k,\s{F}_i^{\D})$ and $H^1(k,\s{F}_i)^{\wedge}$.
	\end{enumerate}
	Furthermore, these cohomology groups are trivial for $r\notin\{-1,0,1,2,3\}$.
\end{theorem}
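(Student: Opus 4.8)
The plan is to assemble the four preceding propositions, each of which establishes one facet of the statement, so that at this stage there is essentially no new mathematics to carry out---only the bookkeeping of matching indices and topological conventions.

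First I would dispatch the non-degeneracy assertion together with the vanishing range. Both are exactly the content of Proposition \ref{non-degeneracy}: it asserts that the Yoneda pairing
\[ H^r(k,\s{F}_i^{\D}) \times H^{2-r}(k,\s{F}_i) \to H^2(k,\G_{m,k}) \cong \Q/\Z \]
is non-degenerate for every $i \in \{0,1,2\}$ and every $r \in \Z$, and that the cohomology groups appearing here are trivial for $r \notin \{-1,0,1,2,3\}$. This simultaneously settles the first and last sentences of the theorem.

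Next, the continuity of the pairing in each relevant degree is precisely Proposition \ref{continuity yoneda}, so the hypothesis needed to speak of \emph{continuous} perfect pairings is already in place. The four perfect-pairing clauses then split cleanly: clauses a) and b), which pair $H^0(k,\s{F}_i^{\D})^\wedge$ with $H^2(k,\s{F}_i)$ and $H^{-1}(k,\s{F}_i^{\D})^\wedge$ with $H^3(k,\s{F}_i)$, are verbatim Proposition \ref{perfect pairing 1}.a) and b); while clauses c) and d), which pair $H^2(k,\s{F}_i^{\D})$ with $H^0(k,\s{F}_i)^\wedge$ and $H^1(k,\s{F}_i^{\D})$ with $H^1(k,\s{F}_i)^\wedge$, are verbatim Proposition \ref{perfect pairing 2}.c) and d). The assertion that one side of each pairing is profinite and the other discrete torsion is built into those propositions.

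The only genuine care required is to confirm that the statements line up exactly: that the profinite completion $(-)^\wedge$ sits on the factor the theorem claims (the dual side $\s{F}_i^{\D}$ in a) and b), the non-dual side $\s{F}_i$ in c) and d)), and that the substitution $r \leftrightarrow 2-r$ between the two displayed families of pairings is consistent throughout. I expect no real obstacle here, since the heavy lifting---the inductive d\'evissage along the filtration $\s{F}_0 \to \s{F}_1 \to \s{F}_2$, the control of the boundary maps through their finite images (Lemma \ref{boundary}), and the delicate topologization of $H^0(k,\s{F}_2^{\D})$---has already been completed in the cited propositions. The one subtlety worth flagging is the potential dependence on the auxiliary topology $\tau$ chosen on $H^0(k,\s{F}_2^{\D})$; this is harmless because, as noted in the remark immediately preceding the theorem, $H^0(k,\s{F}_2^{\D})^\wedge$ is canonically isomorphic to $(H^0(k,\s{F}_2^{\D})_{\Haus})^\wedge$ and the latter is independent of $\tau$ by Proposition \ref{top dual fil}.
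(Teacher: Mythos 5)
Your proposal is correct and matches the paper's proof exactly: the theorem is obtained by combining Propositions \ref{continuity yoneda}, \ref{non-degeneracy}, \ref{perfect pairing 1} and \ref{perfect pairing 2}, with no additional argument needed. Your extra remark on the independence of $H^0(k,\s{F}_2^{\D})^\wedge$ from the auxiliary topology $\tau$ is the same observation the paper records in the remark preceding the theorem.
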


As an immediate corollary of this theorem we get the following result, which gives a formal description of the Pontryagin dual of the algebraic Brauer group of a proper variety defined over a $p$-adic field. In particular, it gives a formal generalization of Lichtenbaum duality for any proper curve over a $p$-adic field.

\begin{corollary} \label{lichtenbaum duality}
	We have a perfect continuous pairing
	\[ H_0(X,\Z)_\tau^\wedge \times \Br_1(X) \to \Q/\Z. \]
	In particular, when $X$ curve, this pairing gives a topological isomorphism 
	\[ \Br(X)^* \cong H_0(X,\Z)_\tau^\wedge. \]
\end{corollary}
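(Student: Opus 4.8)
The plan is to read the corollary off Theorem \ref{Tate} once the two groups in its statement are matched with groups appearing there. By construction $\s{C}_X = \tau_{\leq 1}R\phi_*\G_{m,X} = \s{F}_2$, and by definition \eqref{ht} we have $H_0(X,\Z)_\tau = H^0(k_{\sm},\s{C}_X^\D) = H^0(k,\s{F}_2^\D)$. Thus the case $i=2$, $r=0$ of part a) of Theorem \ref{Tate} already supplies a continuous perfect pairing
\[ H^0(k,\s{F}_2^\D)^\wedge \times H^2(k,\s{F}_2) \to \Q/\Z, \]
that is, a continuous perfect pairing $H_0(X,\Z)_\tau^\wedge \times H^2(k,\s{F}_2) \to \Q/\Z$. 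Everything therefore reduces to a single natural identification, which I expect to be the main obstacle: $H^2(k,\s{F}_2) \cong \Br_1(X)$.

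To establish it I would compare the truncated complex with the full derived pushforward. Writing $C := R\phi_*\G_{m,X}$, the canonical truncation triangle
\[ \tau_{\leq 1}C \to C \to \tau_{\geq 2}C \to (\tau_{\leq 1}C)[1] \]
yields, after applying $R\Gamma(k,-)$, the exact sequence
\[ H^1(k,\tau_{\geq 2}C) \to H^2(k,\tau_{\leq 1}C) \to H^2(k,C) \to H^2(k,\tau_{\geq 2}C). \]
Since $\tau_{\geq 2}C$ is concentrated in degrees $\geq 2$, the hypercohomology spectral sequence gives $H^1(k,\tau_{\geq 2}C)=0$ and $H^2(k,\tau_{\geq 2}C) \cong H^0(k,R^2\phi_*\G_{m,X})$. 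Because $H^2(k,C) = H^2(X_{\sm},\G_{m,X}) = \Br(X)$ (smooth and étale cohomology of $\G_m$ agree), this identifies $H^2(k,\s{F}_2) = H^2(k,\tau_{\leq 1}C)$ with $\ker\bigl(\Br(X) \to H^0(k,R^2\phi_*\G_{m,X})\bigr)$.

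It then remains to recognize this kernel as $\Br_1(X)$. The sheaf $R^2\phi_*\G_{m,X}$ is the sheafification of $U \mapsto \Br(X_U)$, so its stalk at a geometric point of $\Spec k$ is $\Br(\overline{X})$ with its natural Galois action, whence $H^0(k,R^2\phi_*\G_{m,X}) \cong \Br(\overline{X})^{\Gamma}$. Under this identification the edge map $\Br(X) \to H^0(k,R^2\phi_*\G_{m,X})$ is, by the standard description of edge morphisms in the Leray spectral sequence, the restriction map $\Br(X) \to \Br(\overline{X})$ (whose image automatically lands in the invariants); since the inclusion $\Br(\overline{X})^{\Gamma} \hookrightarrow \Br(\overline{X})$ is injective, its kernel is exactly $\Br_1(X) = \ker(\Br(X) \to \Br(\overline{X}))$. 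This gives $H^2(k,\s{F}_2) \cong \Br_1(X)$ and hence the announced perfect pairing $H_0(X,\Z)_\tau^\wedge \times \Br_1(X) \to \Q/\Z$.

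Finally, for the curve case I would invoke $\Br_1(X) = \Br(X)$ (\cite[Theorem 5.6.1.iv)]{BG}). Then $\Br(X)$ is discrete and torsion while $H_0(X,\Z)_\tau^\wedge$ is profinite, so the perfect continuous pairing together with Pontryagin duality (Proposition \ref{Pontryagin duality} and Fact \ref{Pontryagin dual}) yields the topological isomorphism $\Br(X)^* \cong H_0(X,\Z)_\tau^\wedge$. The delicate point throughout is the third paragraph: verifying that the connecting map really is the geometric restriction and that the site conventions ($k_{\sm}$ versus $\et$ and $\fppf$) leave the groups involved unchanged; both are standard but should be checked with care.
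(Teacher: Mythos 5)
Your proposal is correct and follows essentially the same route as the paper: specialize Theorem \ref{Tate} to $i=2$, $r=0$ and identify $H^2(k,\s{F}_2)$ with $\Br_1(X)$, then use $\Br_1(X)=\Br(X)$ for curves. The only difference is that the paper obtains the identification $H^2(k,\tau_{\leq 1}R\phi_*\G_{m,X})\cong\Br_1(X)$ by citing \cite[Lemma 3.1.iv]{GAunits}, whereas you derive it directly from the truncation triangle and the Leray edge map; your derivation is sound.
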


\begin{proof}
	The first assertion follows from taking $i=2$, $r=0$ in Theorem \ref{Tate} and using the fact that 
	\[ H^2(k,\s{F}_2)=H^2(k,\tau_{\leq 1}R\phi_*\G_{m,k}) \cong \Br_1(X) \]
	(see \cite[Lemma 3.1.iv]{GAunits}). The last assertion follows from the fact that the group $\Br_1(X)$ agrees with $\Br(X)$ whenever $X$ is a curve (see \cite[Proposition 5.6.1.iv]{BG}).
\end{proof}

The description of $\Br_1(X)^*$ given by the Corollary \ref{lichtenbaum duality} carries the lack of knowledge about what the group $H_0(X,\Z)_\tau$ is for an arbitrary proper $k$-variety $X$. In the next section, we give a explicit description of $H_0(X,\Z)_\tau$, relating it with $H_0(X',\Z)_\tau$, where $\nu:X'\to X$ is the normalization map.


\section{Applications}

Let $\phi:X\to\Spec k$ be a proper variety over a $p$-adic field $k$. Let $\phi':X'\to\Spec k$ be the normalization of $X$ and let $\nu:X'\to X$ be the normalization map. The pushforward $\nu_*^{(0)}:H_0(X',\Z)_\tau \to H_0(X,\Z)_\tau$ induced by $\nu$ yields the following commutative diagram of abelian groups
\begin{equation*}
	\begin{tikzcd}
		0 \ar{r} & \Ext^1_{k_{\sm}}(\Pic_{X'/k},\G_m) \ar{r} \ar{d}{\eta} & H_0(X',\Z)_\tau \ar{r}{\deg_{\phi'}} \ar{d}{\nu_*^{(0)}} & \pseudo(X')\Z \ar[d,hookrightarrow] \ar{r} & 0 \\
		0 \ar{r} & \Ext^1_{k_{\sm}}(\Pic_{X/k},\G_m) \ar{r} & H_0(X,\Z)_\tau \ar{r}[swap]{\deg_{\phi}} & \pseudo(X)\Z \ar{r} & 0,
	\end{tikzcd}
\end{equation*}
whose rows are exact (see Proposition \ref{ker coker deg}) and the right vertical arrow is injective (see Proposition \ref{divisibility pseudo index}). Then, $\ker\nu_*^{(0)}\cong\ker\eta$ and $\coker\nu_*^{(0)}$ fits in the following exact sequence

\[ 0 \to \coker \eta \to \coker\nu_*^{(0)} \to \frac{\pseudo(X)\Z}{\pseudo(X')\Z} \to 0. \]

We will give an explicit description of $\ker\eta$ and $\coker\eta$ in the case of pinched varieties.


\subsection{Pinched varieties}

Let $X'$ be a projective variety over $k$ and let $\iota':Y'\to X'$ be a closed subscheme of $X'$. Further, let $\psi:Y'\to Y$ be a finite and surjective morphism. By \cite[\S 2.1]{whichpicard}, there exists a co-cartesian and cartesian diagram of $k$-schemes
\begin{equation} \label{pinching}
	\begin{tikzcd}
		Y' \ar{r}{\iota'} \ar{d}[swap]{\psi} & X' \ar{d}{\nu} \\ Y \ar{r}[swap]{\iota} & X,
	\end{tikzcd}
\end{equation}
where $\iota$ is a closed immersion; $\nu$ is finite and surjective and induces and isomorphism $X'\backslash Y'\to X\backslash Y$; and $X$ is a proper variety over $k$. We say that $X$ is obtained by pinching $X'$ along $Y'$ via $\psi$. Furthermore, there exists an exact sequence of locally algebraic groups
\begin{equation} \label{pinched seq}
	0 \to \mu^{Y'/Y} \to \Pic_{X/k} \overset{\nu^*}{\to} \Pic_{X'/k} \to 0,
\end{equation}
where $\mu^{Y'/Y}$ is a connected linear group defined as the cokernel of $R_{Y/k}(\G_{m,Y})\overset{\psi^*}{\to}R_{Y'/k}(\G_{m,Y'})$ (see \cite[Proposition 2.4]{whichpicard}). Then, the sequence of algebraic groups
\begin{equation} \label{pinched seq 0}
	0 \to \mu^{Y'/Y} \to \Pic_{X/k}^0 \overset{\nu^*}{\to} \Pic_{X'/k}^0 \to 0
\end{equation}
is exact. Moreover, if $X'$ is geometrically normal, then $\Pic_{X'/k}^0$ is an abelian variety (see \cite[Proposition 6.2.2]{AG}) and, therefore, $L$ is the linear part of $\Pic_{X/k}^0$ (see \cite[Remark 2.6]{whichpicard}). In particular, when $k$ is $p$-adic and $X'$ is normal, the Picard variety $\Pic_{X'/k}^0$ is an abelian variety. In this case, $X'$ is the normalization of $X$. Thus, we obtain the following result.

\begin{proposition} \label{applications}
	Let $k$ be a $p$-adic field. Let $X$ be the pinching of a normal projective $k$-variety $X'$ along $Y'$ via $\psi:Y'\to Y$ (see \eqref{pinching}). Let $L$ be the group $\mu^{Y'/Y}$ in \eqref{pinched seq}. Then, $\nu:X'\to X$ induces a morphism $H_0(X',\Z)_{\tau} \to H_0(X,\Z)_{\tau}$ that fits in the following exact sequence
	\[ 0 \to \im(L^D(k)\overset{\partial}{\to}(\Pic_{X'/k}^0)^t(k)) \to H_0(X',\Z)_{\tau} \to H_0(X,\Z)_{\tau} \to F \to 0, \]
	where $\partial$ is the connecting map obtained by applying $(-)^\D$ to \eqref{pinched seq 0} and $F$ is a finite group that is an extension
	\[ 0 \to H^1(k,L^D) \to F \to \frac{\pseudo(X)\Z}{\pseudo(X')\Z} \to 0. \]
\end{proposition}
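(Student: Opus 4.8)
The plan is to compute $\ker\eta$ and $\coker\eta$ for the map
$\eta\colon\Ext^1_{k_{\sm}}(\Pic_{X'/k},\G_m)\to\Ext^1_{k_{\sm}}(\Pic_{X/k},\G_m)$ induced by $\nu^*$, since the snake lemma applied to the displayed diagram already identifies $\ker\nu_*^{(0)}$ with $\ker\eta$ and fits $\coker\nu_*^{(0)}$ into the extension $0\to\coker\eta\to\coker\nu_*^{(0)}\to\pseudo(X)\Z/\pseudo(X')\Z\to 0$. Thus the statement follows once I show $\ker\eta\cong\im(\partial_*)$ and $\coker\eta\cong H^1(k,L^D)$, together with finiteness of $F:=\coker\nu_*^{(0)}$.

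First I would reduce to connected components. Because $L=\mu^{Y'/Y}$ is connected, $\nu^*$ is an epimorphism with kernel $L$ carrying $\Pic^0_{X/k}$ onto $\Pic^0_{X'/k}$ and inducing an isomorphism $\NS_{X/k}\xrightarrow{\sim}\NS_{X'/k}$. Comparing, for $X$ and for $X'$, the exact sequences obtained by applying $R\Gamma(k_{\sm},-)$ to the Cartier dual of $0\to\Pic^0\to\Pic\to\NS\to 0$ (as in \eqref{deg map 3}), the terms $\Ext^i_{k_{\sm}}(\NS_{\bullet/k},\G_m)$ are matched by this isomorphism; a diagram chase then shows that $\eta$ has the same kernel and cokernel as the map $\eta_0\colon\Ext^1_{k_{\sm}}(\Pic^0_{X'/k},\G_m)\to\Ext^1_{k_{\sm}}(\Pic^0_{X/k},\G_m)$ induced by $\nu^*$ on connected components.

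Next I would compute $\eta_0$ from \eqref{pinched seq 0}. Since $X'$ is normal, $A:=\Pic^0_{X'/k}$ is an abelian variety, so applying the derived Cartier dual to $0\to L\to\Pic^0_{X/k}\to A\to 0$ gives, by Corollary \ref{dual suave}, the exact triangle $A^t[-1]\to(\Pic^0_{X/k})^\D\to L^D\xrightarrow{\partial}A^t$; write $M=[L^D\xrightarrow{\partial}A^t]$ for the associated generalized $1$-motive, so $(\Pic^0_{X/k})^\D\cong M[-1]$. Under the identifications $\Ext^1_{k_{\sm}}(A,\G_m)\cong A^t(k)$ and $\Ext^1_{k_{\sm}}(\Pic^0_{X/k},\G_m)\cong H^0(k,M)$, the map $\eta_0$ becomes the arrow $A^t(k)\to H^0(k,M)$ in the long exact sequence of $M$. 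From that sequence $L^D(k)\xrightarrow{\partial_*}A^t(k)\to H^0(k,M)\to H^1(k,L^D)\xrightarrow{\partial^{(1)}}H^1(k,A^t)$ I read off $\ker\eta_0=\im(\partial_*\colon L^D(k)\to A^t(k))$, which is the first term of the proposition, and $\coker\eta_0\cong\ker\big(\partial^{(1)}\colon H^1(k,L^D)\to H^1(k,A^t)\big)$.

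The hard part is to prove $\partial^{(1)}=0$, so that $\coker\eta\cong H^1(k,L^D)$. Here I would use that $L=\mu^{Y'/Y}$ is by its very definition a quotient of the Weil restriction $R_{Y'/k}\G_m$; dually $L^D$ embeds into the character module $\Lambda:=(R_{Y'/k}\G_m)^D$, which is an induced (permutation) Galois module, whence $H^1(k,\Lambda)\cong H^1(Y',\Z)=0$ by Shapiro's lemma. The key claim to verify is that $\partial\colon L^D\to A^t$ extends to a morphism $\Lambda\to A^t$ — geometrically, after dualizing, the extension class of $\Pic^0_{X/k}$ is the restriction-to-$Y'$ datum along $\iota'\colon Y'\hookrightarrow X'$, which is already defined on all of $R_{Y'/k}\G_m$; this extension step is the main obstacle and the point requiring genuine care. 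Granting it, $\partial^{(1)}$ factors through $H^1(k,\Lambda)=0$ and hence vanishes. Finally, with $\coker\eta\cong H^1(k,L^D)$ the snake sequence becomes $0\to H^1(k,L^D)\to F\to\pseudo(X)\Z/\pseudo(X')\Z\to 0$, and $F$ is finite because $H^1(k,L^D)$ is finite ($L^D$ is of multiplicative type over the $p$-adic field $k$) and $\pseudo(X)\Z/\pseudo(X')\Z$ is finite, as $\pseudo(X)\mid\pseudo(X')$ by Proposition \ref{divisibility pseudo index}.
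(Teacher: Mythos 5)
Your skeleton --- the snake-lemma reduction to $\ker\eta$ and $\coker\eta$, dualizing the pinching sequence, Shapiro's lemma for the permutation module, and the finiteness of $F$ at the end --- is the framework the paper itself relies on (the paper gives essentially no argument beyond that setup), and the identification $\ker\eta_0=\im\partial$ is correct. But as written the proof has two genuine gaps. The first is the one you flag yourself: you need $\partial\colon L^D\to A^t$ to extend to $\Lambda=(R_{Y'/k}\G_{m})^D$, and ``granting it'' is not harmless. The extension that geometry hands you is the relative Picard scheme of the pair $(X',Y')$, which is an extension of $A=\Pic^0_{X'/k}$ by $R_{Y'/k}\G_{m}/\G_{m}$, \emph{not} by $R_{Y'/k}\G_{m}$; so what comes for free is only a factorization of $\partial$ through $(R_{Y'/k}\G_{m}/\G_{m})^D$. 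From $0\to(R_{Y'/k}\G_{m}/\G_{m})^D\to\Lambda\to\Z\to 0$ and $H^1(k,\Lambda)=0$ one computes $H^1\bigl(k,(R_{Y'/k}\G_{m}/\G_{m})^D\bigr)\cong\Z/d\Z$ with $d$ the gcd of the degrees of the closed points of $Y'$, which is nonzero unless $Y'$ has a $k$-point. Thus lifting the class of $\Pic^0_{X/k}$ from $\Ext^1(A,L)$ to $\Ext^1(A,R_{Y'/k}\G_{m})$ is a real obstruction that must be dealt with, not granted.

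The second gap is the reduction from $\eta$ to $\eta_0$, which you dispatch as ``a diagram chase''; it is not one. Dualizing \eqref{pinched seq} directly gives, with $E^i(-):=\Ext^i_{k_{\sm}}(-,\G_{m})$,
\[ E^0(\Pic_{X/k})\to L^D(k)\to E^1(\Pic_{X'/k})\overset{\eta}{\to}E^1(\Pic_{X/k})\to H^1(k,L^D)\to E^2(\Pic_{X'/k}), \]
so $\ker\eta\cong L^D(k)/\im E^0(\Pic_{X/k})$ and $\coker\eta=\ker\bigl(H^1(k,L^D)\to E^2(\Pic_{X'/k})\bigr)$. Comparing with the $\Pic^0$-versions: $\ker\eta$ surjects onto $\im\partial\cong L^D(k)/\ker\partial$ with kernel isomorphic to the image of $\Hom(\Pic^0_{X/k},\G_{m})\to\Ext^1_{k_{\sm}}(\NS_{X/k},\G_{m})$ (the obstruction to extending a character of $\Pic^0_{X/k}$ to $\Pic_{X/k}$), and $\coker\eta$ only \emph{injects} into $\coker\eta_0=\ker H^1(\partial)$, the discrepancy being controlled by $\im\bigl(E^2(\NS_{X'/k})\to E^2(\Pic_{X'/k})\bigr)$. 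The isomorphism $\NS_{X/k}\cong\NS_{X'/k}$ does not kill these terms: $\Ext^1_{k_{\sm}}(\Z/n,\G_{m})\cong k^*/(k^*)^n$ is already nonzero, and $\NS$ of a normal projective variety can have torsion. So even granting your extension claim you have only proved $\coker\eta\subseteq H^1(k,L^D)$, which is automatic, rather than the required equality; both vanishing statements about the $\NS$-obstructions need separate arguments.
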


\begin{remark}
	Observe that, when $X'$ has a $k$-point, $(\Pic_{X'/k}^0)^t$ is canonically isomorphic to $\Alb_{X'/k}^0$ (see \cite[Proposition 6.2.1]{AG}). In this case, $\partial$ in Proposition \ref{applications} may be regarded as a morphism $L^D(k)\overset{\partial}{\to}\Alb_{X'/k}^0(k)$.
\end{remark}

\end{document}